\newenvironment{enumeratei}{\begin{enumerate}[\upshape (i)]}{\end{enumerate}}
\newenvironment{enumeratea}{\begin{enumerate}[\upshape (a)]}{\end{enumerate}}
\newenvironment{inparaenuma}{\begin{inparaenum}[\upshape \bfseries (a) ]}{\end{inparaenum}}
\numberwithin{equation}{section}
\numberwithin{figure}{section}
\numberwithin{table}{section}
\newtheorem{thm}{Theorem}[section]
\newtheorem{lem}[thm]{Lemma}
\newtheorem{theorem}[thm]{Theorem}
\newtheorem{corollary}[thm]{Corollary}
\newtheorem{prop}[thm]{Proposition}
\newtheorem{defn}[thm]{Definition}
\newtheorem{ass}[thm]{Assumption}
\newtheorem{lemma}[thm]{Lemma}
\theoremstyle{definition}
\newtheorem{rem}{Remark}
\newcommand{\ind}{\mathds{1}}
\newcommand{\eps}{\varepsilon}
\newcommand{\set}[1]{\left\{#1\right\}}
\newcommand{\convas}{\stackrel{\mathrm{a.s.}}{\longrightarrow}}
\newcommand{\stod}{\preceq_{\mathrm{st}}}
\def\qed{ \hfill $\blacksquare$}
\newcommand{\cA}{\mathcal{A}}\newcommand{\cB}{\mathcal{B}}
\newcommand{\cD}{\mathcal{D}}\newcommand{\cF}{\mathcal{F}}
\newcommand{\cS}{\mathcal{S}}\newcommand{\cT}{\mathcal{T}}
\newcommand{\cY}{\mathcal{Y}}
\newcommand{\vt}{\mathbf{t}}
\newcommand{\bL}{\mathbb{L}}
\newcommand{\bN}{\mathbb{N}}
\newcommand{\bR}{\mathbb{R}}
\newcommand{\bT}{\mathbb{T}}
\newcommand{\bZ}{\mathbb{Z}}
\DeclareMathOperator{\E}{\mathbb{E}}
\DeclareMathOperator{\pr}{\mathbb{P}}
 \DeclareMathOperator{\BP}{BP}
\newcommand{\sss}{\scriptscriptstyle}
\newcommand{\erdos}{Erd\H{o}s-R\'enyi }
\newcommand{\convd}{\stackrel{d}{\longrightarrow}}
\newcommand{\convp}{\stackrel{P}{\longrightarrow}}
\newcommand{\yu}{{\sf Yule} }
\newcommand{\LPA}{{\sf LPA} }
\newcommand{\desc}[2]{{#1}_{#2 \downarrow}}
\newcommand{\LBP}{\underline{\BP}}
\newcommand{\LZF}{\underline{Z}_f}
\newcommand{\LW}{\underline{W}_\infty}
\begin{document}

\begin{frontmatter}
\title{Root finding algorithms and persistence of Jordan centrality in growing random trees}
\runtitle{Persistence of Jordan centrality}

\begin{aug}
\author[A]{\fnms{Sayan} \snm{Banerjee}\ead[label=e1]{sayan@email.unc.edu}}
\and
\author[A]{\fnms{Shankar} \snm{Bhamidi}\ead[label=e2,mark]{bhamidi@email.unc.edu}}
\address[A]{Department of Statistics and Operations Research, 304 Hanes Hall, University of North Carolina, Chapel Hill, NC 27599,
\printead{e1,e2}}
\end{aug}

\begin{abstract}
We consider models of growing random trees $\set{\cT_f(n):n\geq 1}$ with model dynamics driven by an attachment function $f:\bZ_+\to \bR_+$. At each stage a new vertex enters the system and connects to a vertex $v$ in the current tree with probability proportional to $f(\text{degree}(v))$. The main goal of this study is to understand the performance of root finding algorithms. A large body of work (e.g. \cite{bubeck2017finding,jog2016analysis,jog2018persistence}) has emerged in the last few years in using techniques based on the Jordan centrality measure \cite{jordan1869assemblages} and its variants to develop root finding algorithms. Given an unlabelled unrooted tree, one computes the Jordan centrality for each vertex in the tree and for a fixed budget $K$ outputs the optimal $K$ vertices (as measured by Jordan centrality). Under general conditions on the attachment function $f$, we derive necessary and sufficient bounds on the budget $K(\eps)$ in order to recover the root with probability at least $1-\eps$. For canonical examples such as linear preferential attachment and uniform attachment, these general results give matching upper and lower bounds for the budget. We also prove persistence of the optimal $K$ Jordan centers for any $K$, i.e. the existence of an almost surely  finite random time $n^*$ such that for $n \geq n^*$ the identity of the $K$-optimal Jordan centers in $\set{\cT_f(n):n\geq n^*}$ does not change,  thus describing robustness properties of this measure. Key technical ingredients in the proofs of independent interest include sufficient conditions for the existence of exponential moments for limits of (appropriately normalized) continuous time branching processes within which the models $\set{\cT_f(n):n\geq 1}$ can be embedded,  as well as rates of convergence results to these limits.
\end{abstract}

\begin{keyword}[class=MSC2020]
\kwd[Primary ]{05C80, 05C85}
\kwd{Random Graphs}
\kwd{Graph Algorithms}
\kwd[; secondary ]{60J80, 60J85}
\end{keyword}

\begin{keyword}
\kwd{continuous time branching processes}
\kwd{temporal networks}
\kwd{centrality measures}
\kwd{Jordan centrality}
\kwd{random trees}
\kwd{stable age distribution theory}
\kwd{recursive distributional equations}
\kwd{Malthusian rate of growth}
\end{keyword}

\end{frontmatter}

\section{Introduction}
\label{sec:int}

\subsection{Motivation}
Driven by the explosion in the amount of data on various real world networks, the last few years have seen the emergence of many new mathematical network models. Goals underlying these models include \begin{inparaenuma}
	\item extracting unexpected connectivity patterns within in the network (e.g. community detection);
	\item understand properties of dynamics on these real world systems such as the spread of epidemics and opinion dynamics;
	\item understanding mechanistic reasons for the emergence of empirically observed properties of these systems such as heavy tailed degree distribution or the small world property. 
\end{inparaenuma} 
We refer the interested reader to \cite{albert2002statistical,newman2003structure,newman2010networks,bollobas2001random,durrett-rg-book,van2009random} and the references therein for a starting point to the vast literature on network models.

 An increasingly important area in network analysis is the setting of dynamic or temporal networks (see \cite{holme2012temporal,masuda2016guidance} and the references therein): networks that evolve and change over time. In the context of the probability community, one question that has attracted increasing attention falls under the branch of \emph{network archaeology} \cite{navlakha2011network}: trying to understand the evolution of a network based on only observing the current structure of the network. Prototypical questions in this area include the so-called \emph{Adam problem} \cite{bubeck2017finding,bubeck-mossel}:  can one reconstruct the original seed that was the genesis of the current network based off of only current structural (topological) information of the network without any other information such as time-stamps, labels etc.   These questions have implications in a number of fields ranging from systems biology \cite{navlakha2011network,young2018network} to the detection of sources of malicious information on social networks \cite{shah2012rumor}. We give a precise description of the class of evolving network models considered in this paper in the next section and then describe the goals of seed-finding algorithms.

\subsection{Model definition}
\label{sec:model}
Fix an attachment function $f$.
 We will grow a sequence of random trees $\set{\cT_{f}(j):2\leq j\leq n }$ as follows (see Section \ref{sec:disc} regarding related work for more general network models): 

\begin{enumeratei}
	\item For $n=2$, $\cT_{f}(j)$ consists of two vertices attached by a single edge. Label these as $\{v_1, v_2\}$ and call the vertex $v_1$ as the ``root'' of the tree. 
	\item Fix $n> 2$. Let the vertices in $\cT_{f}({n-1})$ be labeled as $\{v_1,\dots,v_{n-1}\}$. For each vertex $v\in \cT_{f}({n-1})$  let $\text{deg}(v)$ denote the degree of $v$.  A new vertex $v_n$ enters the system. Conditional on $\cT_{f}({n-1})$, this new vertex attaches to a currently existing vertex $v\in \cT_{f}({n-1})$ with probability proportional to $f(\text{deg}(v))$. Call the vertex that $v_n$ attaches to, the ``parent'' of $v_n$ and direct the edge from this parent to $v_n$ resulting in the tree $\cT_{f}(n)$. 
\end{enumeratei}

Thus, for any $n$, $\cT_{f}(n)$ is a random tree on $n$ labelled vertices $\{v_1,\dots,v_n\}$ rooted at $v_1$. To fix ideas, explicit examples of attachment functions include:

\begin{enumeratea}
	\item {\bf Uniform attachment:} $f(\cdot)\equiv 1$. In this case new vertices attach to existing vertices uniformly at random. The corresponding random tree is called the random recursive tree and has been heavily analyzed across probabilistic combinatorics and computer science \cite{smythe1995survey,drmota2009random}. 
	\item {\bf ``Pure'' preferential attachment:} $f(k) = k$. Here new vertices attach to pre-existing vertices with probability proportional purely to the degree of the existing vertex \cite{barabasi1999emergence}. 
	\item {\bf Affine Preferential attachment:} $f(k) = k+\beta$ for $\beta \ge 0$ \cite{durrett2007random}. This is a more general analogue of the pure preferential attachment model above in which one can `fine tune' the attachment probability of the lower degree vertices via the parameter $\beta$.
	\item {\bf Sublinear attachment:}  We define sublinear attachment as in \cite[Def. 2]{jog2016analysis}: An attachment scheme with function $f$ is called sublinear if:
	\begin{enumeratei}
		\item $f$ is non-decreasing and not identically equal to $1$ with $f(k)\geq 1$ for all $k \ge 1$. 
		\item There exists $\alpha \in (0,1)$ such that $f(k) \leq k^\alpha$ for all $k\geq 1$. 
	\end{enumeratei}
	In particular, for any $\alpha \in (0,1),$ $f(k) = k^{\alpha}, \, k \ge 1,$ is a sublinear attachment function.
	
\end{enumeratea}

The following terminology will be useful in the sequel. 

\begin{defn}
	\label{def:growing-trees}
	A collection of trees $\set{\cT(n):n\geq 1}$ is called a sequence of growing trees if the sequence starts with some finite tree $\cT(1)$ with a distinguished vertex called the root and then, for each subsequent $n$, $\cT(n+1)$ is obtained from $\cT(n)$ by adding a single new vertex to $\cT(n)$ using a single edge from this new vertex to a vertex $v\in \cT(n)$. 
\end{defn}
\subsection{Root finding algorithms and persistence}

 Let $\bT$ be the space of equivalence classes (upto isomorphisms) of finite unrooted and unlabelled trees.   For a labelled (rooted or unrooted) tree $\cT$, write $\cT^{\circ}$ for the isomorphism class of $\cT$ in $\bT$. For $\vt \in \bT$ and vertex $v\in \vt $ write $(\vt, v)$ for the tree rooted at $v$. For a rooted tree $(\vt, v)$ and a vertex $u\in \vt$ write $\desc{(\vt, v)}{u}$ to denote the subtree of $u$ and it's descendants (viewed as rooted tree with root $v$). Thus $\desc{(\vt, v)}{v} = (\vt,v)$.

 Now fix $K\geq 1$ and a mapping $H_K$ on $\bT$ that takes an input tree $\vt \in \bT$ and outputs a subset of $K$ vertices from $\vt$. 
 
 \begin{defn}[Root finding algorithm]
 	\label{def:root-find}
	Fix  $0<\eps < 1$ and $K\geq 1$. A mapping $H_K$ is called a budget $K$ root finding algorithm with error tolerance $\eps$ for the collection of random trees $\set{\cT_f(n):n\geq 2}$ if 
	\begin{equation}\label{rootinflim}
	\liminf_{n\to\infty} \pr(v_1 \in H_K(\cT_f(n)^\circ)) \geq 1-\eps.
	\end{equation}
 \end{defn}

One of the most natural mechanisms to generate mappings $H_K$ is via {\bf centrality} measures. Precisely given $\vt\in \bT$: 
 \begin{inparaenuma}
 	\item For each vertex $v\in \vt$ compute a specific measure of centrality $\Psi_{\vt}(v)$. Examples of such measures are given below.  This measure depends purely on the topology of the network as the standing assumption is that we do not have access to vertex labels or other information that could indicate age of vertices in the network. 
	\item For fixed $K$ let $H_{K,\Psi}(\vt)$ be the output of the ``top'' (could be largest or smallest depending on the centrality measure) $K$ vertices measured according to the above centrality measure.  Write $H_{K,\Psi}(\vt):= (v_{1,\Psi}(\vt), v_{2,\Psi}(\vt), \ldots, v_{K, \Psi}(\vt))$ (for the time being breaking ties arbitrarily). 
 \end{inparaenuma} 
 
 Two natural centrality measures are:
 
 \begin{enumeratei}
 	\item {\bf Degree centrality:} Here for $\vt$ and $v\in \vt $ let $\Psi_{\deg}(v)$ denote the degree of the vertex.  Thus in this case $H_{K,\Psi}(\vt)$ outputs the $K$ {\bf largest} degree vertices. Properties of such centrality measures for probabilistic models of dynamic networks were studied in for example \cite{DM,galashin2013existence,banerjee2020persistence}. 
	\item {\bf Jordan centrality \cite{jordan1869assemblages}:} This is the main centrality measure considered in this paper. For any finite set $A$ write $|A|$ for the cardinality of $A$. Now define 
	\begin{equation}
		\label{eqn:psi-def}
		\Psi_{\vt}(v):= \max_{u\in \vt\setminus \set{v}} |\desc{(\vt,v)}{u}|
	\end{equation}
	In words, viewing $v$ as the root of $\vt$,  $\Psi_{\vt}(v)$ is the largest size of the family trees of the children of $v$. We sometimes put $\vt$ in the subscript to emphasize the dependence on the underlying tree $\vt$.  Note that the above measure is {\bf maximized} at vertices of degree one. Thus in this case vertices more central in the tree would have smaller $\Psi$-values. A vertex that minimizes the above centrality measure is called a \emph{Jordan centroid}.  In this case  $H_{K,\Psi}(\vt)$ outputs the $K$ vertices with {\bf smallest} $\Psi$-values. 
 \end{enumeratei}
 
 A natural question then in the context of probabilistic models for evolving networks involves the notion of {\bf persistence}.  

 \begin{defn}[Persistence]\label{def:pers}
 	Fix $K\geq 1$ and a network centrality measure $\Psi$. For the random tree process $\set{\cT_f(n):n\geq 2}$ say that the sequence is {\bf $(\Psi,K)$ persistent} if $\exists$ $n^* < \infty$ a.s. such that for all $n\geq n^*$ the optimal $K$ vertices $(v_{1,\Psi}(\cT_f(n)^\circ), v_{2,\Psi}(\cT_f(n)^\circ), \ldots, v_{K, \Psi}(\cT_f(n)^\circ))$ remain the same and further the relative ordering amongst these $K$ optimal vertices remains the same. 
 \end{defn}
 Thus, for example, if $\Psi$ is the degree of a vertex (referred to as degree centrality) and $K=1$ then an evolving network sequence would be $(\Psi, 1)$ persistent if, almost surely, the identity of the maximal degree vertex fixates after finite time. Persistence of centrality measures both allows one to estimate the initial seed using these measures and validates robustness properties of these measures.
A weaker notion of persistence (specific to the Jordan centrality measure) was introduced in \cite{jog2016analysis} in their analysis of sub-linear preferential attachment schemes: 
 \begin{defn}[Terminal centroid \cite{jog2016analysis}]\label{def:term-cent}
 	For a sequence of growing trees $\set{\cT(n):n\geq 1}$, say that a vertex $v^*\in \cup_{n=1}^\infty \cT(n)$ is a terminal centroid if for every vertex $u\neq v^*\in \cup_{n=1}^\infty \cT(n)$ there exists a finite time $M:=M(u) <\infty$ such that for all $n\geq M(u)$, with $\Psi$ as in \eqref{eqn:psi-def},
	\[\Psi_{\cT(n)}(v^*)\leq \Psi_{\cT(n)}(u). \]
 \end{defn}
 Note that as described in \cite{jog2016analysis}, existence of a terminal centroid vertex $v^*$ does not imply persistence as in Definition \ref{def:pers} since $v^*$ might be a terminal centroid without being the centroid at any finite time $n$. Thus this notion of persistence is weaker than Definition \ref{def:pers}. 
 
\subsection{Informal description of our aims and results}

This paper has the following two major aims:

\begin{enumeratea}
	
\item {\bf Persistence of Jordan centrality and estimation of the root:} In the context of growing random trees, the Jordan centrality measure has been used in \cite{bubeck2017finding} to derive root finding algorithms when the underlying trees are grown using pure preferential attachment and uniform attachment. When the attachment function is sub-linear then existence of a \emph{terminal centroid} (see Definition \ref{def:term-cent}) was proven in \cite{jog2016analysis}.  The aim of this paper is to understand the performance of Jordan centrality for deriving root finding algorithms for random trees grown using general attachment functions $f$. We will establish  upper and lower bounds of the budget $K$ in terms of the error tolerance $\eps$. In this same setting, we establish persistence (as in Definition \ref{def:pers}) of the optimal vertices as measured via Jordan centrality thus ensuring robustness of these root finding procedures. 

\item {\bf Properties of branching process limits:} There is a deep connection between the random tree models in Section \ref{sec:model} and supercritical continuous time branching processes \cite{athreya1972,jagers-ctbp-book,jagers1984growth,nerman1981convergence}, also known as Crump-Mode-Jagers (CMJ) branching processes, see Lemma \ref{lem:ctb-embedding-no-cp}. A fundamental result in the theory of branching processes is the almost sure convergence of the appropriately normalized population size to a limiting random variable $W_{\infty}$ (see Theorem \ref{prop:convg-limit}). For specific models such as the uniform attachment scheme or the pure preferential attachment scheme, one can compute the limit distributions exactly and these play a major role in understanding the performance of root-finding algorithms, see e.g. \cite{bubeck2017finding}. Much less is known about these limit random variables for general $f$ beyond continuity properties. The following two technical contributions in this paper lie at the heart of root detection algorithms for general $f$, and are of independent interest.

Firstly, under some assumptions, we show that $W_{\infty}$ has exponential tails. This result plays a major role in the analysis of root-finding algorithms. In particular, it is used to obtain precise quantitative bounds on the probability that the number of descendants of an individual $v$ in the branching process exceeds the total number of descendants of a collection $\mathcal{S}$ of individuals (not containing $v$) for any large time in terms of the size of $\mathcal{S}$. This naturally translates to analogous estimates for growing random trees by their connection to continuous time branching processes.

In addition, we also provide convergence rates for the normalized population size to $W_{\infty}$ in an appropriate sup-norm metric (Theorem \ref{thm:expon-convergence} and Corollary \ref{cor:sup-bound}). These results are used to furnish quantitative estimates for the probability that, for some $K \in \mathbb{N}$, the Jordan centrality score of $v_n$ takes one of the $K$ smallest values in $\cT(j)$ for any $j \ge n$, for sufficiently large $n$. An application of the Borel Cantelli Lemma then establishes persistence of the $K$ most central vertices (as measured by Jordan centrality) in the growing network for any fixed $K \in \mathbb{N}$ (Theorem \ref{thm:persistence}). 


\end{enumeratea}

\subsection{Organization of the paper}
We start by defining fundamental objects required to state our main results in Section \ref{sec:not}. Our main results are in Section \ref{sec:main-res}. In Section \ref{sec:disc} we discuss the relevance of this work as well as related literature. The remaining sections are devoted to proofs of the main results.  
\section{Preliminaries}
\label{sec:not}

\subsection{Mathematical notation}

We use $\stod$ for stochastic domination between two real valued probability measures. For $J\geq 1$ let $[J]:= \set{1,2,\ldots, J}$. If $Y$ has an exponential distribution with rate $\lambda$, write this as $Y\sim \exp(\lambda)$. Write $\bZ$ for the set of integers, $\bR$ for the real line, $\bN$ for the natural numbers and let $\bR_+:=(0,\infty)$. Write $\convas,\convp,\convd$ for convergence almost everywhere, in probability and in distribution respectively. For a non-negative function $n\mapsto g(n)$,
we write $f(n)=O(g(n))$ when $|f(n)|/g(n)$ is uniformly bounded, and
$f(n)=o(g(n))$ when $\lim_{n\rightarrow \infty} f(n)/g(n)=0$.
Furthermore, write $f(n)=\Theta(g(n))$ if $f(n)=O(g(n))$ and $g(n)=O(f(n))$.
Finally, we write that a sequence of events $(A_n)_{n\geq 1}$
occurs \emph{with high probability} (whp) when $\pr(A_n)\rightarrow 1$ as $n \rightarrow \infty$. To ease notation, for the rest of this article, we will write $\cT^\circ(n)$ for $(\cT_f(n))^\circ$, $n \ge 2$.

\subsection{Assumptions on attachment functions}
Here we setup constructions needed to state the main results.  We will need the following assumption on the attachment functions of interest in this paper. We mainly follow \cite{jagers-ctbp-book,jagers1984growth,nerman1981convergence,rudas2007random}. 

\begin{ass}
	\label{ass:attach-func} 
	\begin{enumeratei}
		\item Every attachment function $f$ is assumed to satisfy $f_* := \inf_{i\geq 1} f(i) > 0$. 
		\item Every attachment function $f$ can grow at most linearly i.e. $\exists C< \infty$ such that $\limsup_{k\to\infty} f(k)/k \leq C$. This is equivalent to there existing a constant $C$ such that $f(k)\leq Ck$ for all $k\geq 1$. 
		\item Consider the following function $\hat{\rho}:(0,\infty)\to (0,\infty]$ defined via,
\begin{equation}
\label{eqn:rho-hat-def}
	\hat{\rho}(\lambda):= \sum_{k=1}^\infty \prod_{i=1}^{k} \frac{f(i)}{\lambda + f(i)}. 
\end{equation}
 Define 
$\underline{\lambda}:= \inf\set{\lambda > 0: \hat{\rho}(\lambda) < \infty}$. 
We assume,
\begin{equation}
\label{eqn:prop-under-lamb}
\underline{\lambda} < \infty \ \text{ and } \ \lim_{\lambda\downarrow\underline{\lambda}} \hat{\rho}(\lambda) > 1. 
\end{equation}
\end{enumeratei}
\end{ass}
Using (iii) of the above Assumption and the monotonicity of $\hat{\rho}(\cdot)$, there exists a unique $\lambda^*:=\lambda^*(f) \in (0, \infty)$ such that 
\begin{equation}
\label{eqn:malthus-def}
	\hat{\rho}(\lambda^*) = 1. 
\end{equation}
This object is often referred to as the Malthusian rate of growth parameter. The above assumptions are relatively standard and are required for well-definedness of an associated branching process in the next section. The next assumption is less standard. 

\begin{ass}
	\label{ass:lim-sup}
	With $\lambda^*$ denoting the Malthusian rate of growth parameter,  $\limsup_{i\to\infty} f(i)/i < \lambda^*$. In other words, $\hat{\rho}\left(\limsup_{i\to\infty} f(i)/i \right) > 1$.
\end{ass}

Assumption \ref{ass:lim-sup} is algebraic in nature and can be verified for a given attachment function $f$ via the explicit representation of $\hat{\rho}(\cdot)$ given in \eqref{eqn:rho-hat-def}. The following lemma gives some general settings where this assumption is satisfied.

\begin{lemma}\label{lem:limfi-i-suff}
	Assume $f$ satisfies Assumption \ref{ass:attach-func}. Assumption \ref{ass:lim-sup} is satisfied if any of the following conditions hold:
	\begin{itemize}
	\item[(i)] There exists $C^*\geq 0$ such that $\lim_{i\to\infty} f(i)/i = C^*$;
	\item[(ii)] $\limsup_{i\to\infty} f(i)/i < f_*$.
	\end{itemize}  
	In particular uniform attachment, (affine) preferential attachment and sublinear preferential attachment all satisfy Assumption \ref{ass:lim-sup}. 
\end{lemma}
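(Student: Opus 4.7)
The plan is to analyze the series $\hat\rho(\lambda)$ at $\lambda = L := \limsup_{i\to\infty} f(i)/i$. Since each summand in \eqref{eqn:rho-hat-def} is strictly decreasing in $\lambda$, dominated convergence shows $\hat\rho$ is strictly decreasing and continuous on $(\underline{\lambda},\infty)$, and Assumption~\ref{ass:attach-func}(iii) forces $\lambda^{*} > \underline{\lambda}$. Hence, under either hypothesis, it suffices to establish $\hat\rho(L) > 1$ directly, or the stronger statement $\underline{\lambda} \geq L$.

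For hypothesis (ii), I would proceed by direct estimation. Since $f(i) \geq f_{*}$ for every $i$, one has the uniform pointwise bound $\frac{f(i)}{L+f(i)} \geq \frac{f_{*}}{L+f_{*}}$; summing the resulting geometric lower bound,
\[
\hat\rho(L) \ \geq\ \sum_{k=1}^{\infty}\Big(\frac{f_{*}}{L+f_{*}}\Big)^{k} \ =\ \frac{f_{*}}{L}
\]
(with the natural convention $f_{*}/0 = \infty$), and since $L < f_{*}$ this exceeds $1$, giving $\lambda^{*} > L$.

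For hypothesis (i) with $C^{*} = 0$, $L = 0 < f_{*}$ and (ii) applies. For $C^{*} > 0$, so $L = C^{*}$, I instead prove $\underline{\lambda} \geq C^{*}$ by showing $\hat\rho(\lambda) = \infty$ whenever $\lambda < C^{*}$: fix such $\lambda$ and $\epsilon \in (0, C^{*} - \lambda)$ so that $c := C^{*} - \epsilon > \lambda$. Since $f(i)/i \to C^{*}$, $f(i) \geq ci$ for all $i$ beyond some $I$, and monotonicity of $x \mapsto x/(\lambda + x)$ upgrades this to $\frac{f(i)}{\lambda+f(i)} \geq \frac{ci}{\lambda+ci}$ for $i > I$; combining $\log(1 + \lambda/(ci)) \leq \lambda/(ci)$ with $\sum_{i=I+1}^{k} 1/i = \log k + O(1)$ yields $\prod_{i=1}^{k}\frac{f(i)}{\lambda+f(i)} \geq A\, k^{-\lambda/c}$ for some $A > 0$, and $\lambda/c < 1$ forces the tail sum to diverge. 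Thus $\underline{\lambda} \geq C^{*}$ and $\lambda^{*} > \underline{\lambda} \geq C^{*} = L$. The three canonical examples fall under (i): uniform attachment has $C^{*} = 0$; affine preferential attachment $f(i) = i + \beta$ has $C^{*} = 1$; sublinear attachment satisfies $f(k)/k \leq k^{\alpha-1} \to 0$, so $C^{*} = 0$. I expect the only delicate step to be the polynomial lower bound on $\prod_{i\leq k} f(i)/(\lambda+f(i))$ in case (i) with $C^{*} > 0$, where choosing $c$ strictly between $\lambda$ and $C^{*}$ is essential to keep the critical exponent $\lambda/c$ below $1$; everything else is routine series manipulation.
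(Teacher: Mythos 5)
Your proof is correct. For case (ii) you are doing essentially what the paper does: both arguments rest on the same geometric-series bound $\hat\rho(\lambda)\ge\sum_{k\ge1}\bigl(f_*/(\lambda+f_*)\bigr)^k=f_*/\lambda$; the paper evaluates it at $\lambda^*$ to read off $1=\hat\rho(\lambda^*)\ge f_*/\lambda^*$, hence $\lambda^*\ge f_*>\limsup_i f(i)/i$, while you evaluate it at $L=\limsup_i f(i)/i$ and invoke monotonicity of $\hat\rho$ to get $L<\lambda^*$ --- the same estimate read in the other direction. The genuine difference is case (i): the paper simply cites \cite[Lemma 8.1]{banerjee2018fluctuation}, whereas you give a self-contained argument, reducing $C^*=0$ to case (ii) and, for $C^*>0$, showing $\underline{\lambda}\ge C^*$ via the polynomial lower bound $\prod_{i\le k}f(i)/(\lambda+f(i))\ge A\,k^{-\lambda/c}$ with $\lambda/c<1$, so that $\hat\rho(\lambda)=\infty$ for $\lambda<C^*$ and then $\lambda^*>\underline{\lambda}\ge C^*$ by Assumption \ref{ass:attach-func}(iii). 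That divergence argument is sound (the factor $x\mapsto x/(\lambda+x)$ is increasing, the first $I$ factors are positive constants by $f_*>0$, and $\log(1+\lambda/(ci))\le\lambda/(ci)$ gives the harmonic-sum exponent), and it makes the lemma independent of the external reference, which is a small but real gain in self-containedness; the cost is a slightly longer argument that essentially reproves the cited lemma. Your verification of the three canonical examples under (i) also matches the paper's intent.
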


\begin{proof}
If (i) holds, then Assumption \ref{ass:lim-sup} follows from \cite[Lemma 8.1]{banerjee2018fluctuation}. For the second assertion, observe that
$$
1=\hat{\rho}(\lambda^*) = \sum_{k=1}^\infty \prod_{i=1}^{k} \frac{f(i)}{\lambda^* + f(i)} \ge \sum_{k=1}^\infty\left(\frac{f_*}{\lambda^* + f_*}\right)^k = \frac{f_*}{\lambda^*}.
$$
Therefore, if (ii) holds, then $\lambda^* \ge f_* > \limsup_{i\to\infty} f(i)/i$.
\end{proof}

\begin{rem}
If $\sum_{i=1}^{\infty}{1}/{f(i)^2} < \infty$, Assumption \ref{ass:lim-sup} implies \eqref{eqn:prop-under-lamb} \cite[Lemma 7.8]{banerjee2020persistence}. 
\end{rem}

\subsection{Branching processes}
\label{sec:br-def}
Fix an attachment function $f$ as above. We can construct a point process $\xi_f$ on $\bR_+$ as follows: Let $\set{E_i:i\geq 1} $ be a sequence of independent exponential random variables with $E_i \sim \exp(f(i))$. Now define 
$\sigma_i:= \sum_{j=1}^{i} E_i$ for $ i\geq 1 $ with $\sigma_0 =0$.
The point process $\xi_f$ is defined via, 
\begin{equation}
\label{eqn:xi-f-def}
	\xi_f:=(\sigma_1, \sigma_2, \ldots).
\end{equation}
Abusing notation, we write for $t\geq 0$,
\begin{equation}
\label{eqn:xi-f-t}
	\xi_f[0,t]:= \#\set{i: \sigma_i \leq t}, \qquad \mu_f[0,t]:= \E(\xi_f[0,t]). 
\end{equation}
$\xi_f, \mu_f$ can be naturally extended to measures on $(\bR_+, \cB(\bR_+))$. We will need a variant of the above objects: for fixed $k\geq 1$, let $\xi_f^{\sss(k)}$ denote the `shifted' point process where the first inter-arrival time is $E_k$. Namely, define the sequence,
\[\sigma_{{\sss(k)},i} = E_k + E_{k+1}+\cdots E_{k+i-1}, \qquad i\geq 1.\]
As before let $\sigma_{{\sss(k)},0} = 0$.  Then define,
\begin{equation}
\label{eqn:xi-k-f-t-def}
	\xi_f^{\sss(k)}:=(\sigma_{\sss(k), 1}, \sigma_{\sss(k), 2}, \ldots),  \qquad \mu_f^{\sss(k)}[0,t]:= \E(\xi_f^{\sss(k)}[0,t]). 
\end{equation}
As above, $\xi_f^{\sss(k)}[0,t]:= \#\set{i: \sigma_{\sss(k), i} \leq t}$. We abbreviate $\xi_f[0,t]$ as $\xi_f(t)$ and similarly $\mu_f(t), \xi_f^{\sss(k)}(t),\mu_f^{\sss(k)}(t)$. Note that $\xi_f^{\sss(1)}(\cdot) = \xi_f(\cdot)$. 


\begin{defn}[Continuous time branching process (CTBP)]
	\label{def:ctbp}
	Fix attachment function $f$ satisfying Assumption \ref{ass:attach-func}(ii). A continuous time branching process driven by $f$, written as $\{\BP_f(t) : t \ge 0\}$, is defined to be a branching process started with one individual at time $t=0$ and such that this individual, as well as every individual born into the system, has an offspring distribution that is an independent copy of the point process $\xi_f$ defined in \eqref{eqn:xi-f-def}.  
\end{defn}
We refer the interested reader to \cite{jagers-ctbp-book,athreya1972,nerman1981convergence} for general theory regarding continuous time branching processes. The branching processes under consideration also go by the name of Crump-Mode-Jagers (CMJ) branching processes. We will also use $\BP_f(t)$ to denote the collection of all individuals at time $t\geq 0$.  We will use $Z_f(t)$ or $|\BP_f(t)|$ to denote the number of individuals in the population at time $t$. Note in our construction, by our assumption on the attachment function, individuals continue to reproduce forever.  Write $m_f(\cdot)$ for the corresponding expectation i.e.,  
\begin{equation}
\label{eqn:mft-def}
	m_f(t):= \E(Z_f(t)), \qquad t\geq 0,
\end{equation} 
Under Assumption \ref{ass:attach-func}(ii), it can be shown \cite[Chapter 3]{jagers-ctbp-book} that for all $t>0$, $m_f(t) <\infty$ and $m_f(\cdot)$ is strictly increasing with $m_f(t)\uparrow\infty$ as $t\uparrow\infty$.

The connection between CTBP and the discrete random tree models in the previous section is given by the following result which is easy to check using properties of exponential distribution (and is the starting point of the Athreya-Karlin embedding \cite{athreya1968}). Start with two vertices $v_1$ and $v_2$ connected by a single edge. Now let $\BP^{\sss(1 \downarrow)}_f(\cdot), \BP^{\sss(2 \downarrow)}_f(\cdot)$ be two independent branching processes driven by  $\xi_f$ as in Definition \ref{def:ctbp} started from $v_1$ and $v_2$ respectively. Define $\widetilde{\BP}_f(t) := \BP_f^{\sss(1 \downarrow)}(t) \cup \BP_f^{\sss(2 \downarrow)}(t)$. {For $m\geq 3$ label the vertices in $\widetilde{\BP}_f(\cdot)$ in order of their arrival into the system. Thus, for any $t\ge 0$, $\widetilde{\BP}_f(t)$ can be viewed as a labelled tree rooted at $v_1$, with edges between parents and their offspring. Write the natural filtration generated by both branching processes as $\set{\cF(t):t\geq 0}$, where $\cF(t) := \sigma\set{(\BP_f^{\sss(1 \downarrow)}(s), \BP^{\sss(2 \downarrow)}_f(s)): s \le t}, \ t \ge 0$.} Define the sequence of stopping times:
\begin{equation}
\label{eqn:tm-stop-def}
	\eta_m^{\sss(1,2)}:=\inf\set{t\geq 0: |\widetilde{\BP}_f(t)| = m}, \qquad m\geq 2, 
\end{equation} 
{where $|\widetilde{\BP}_f(t)|$ denotes the number of individuals in $\widetilde{\BP}_f(t)$ at time $t$.}
In particular, by the labelling convention described above, the vertex arriving at time $\eta_m^{\sss(1,2)}$ is labelled as $v_m$. Here we have added both $1,2$ in the superscript to make it clear that this stopping time depends on both branching processes. 
\begin{lemma}\label{lem:ctb-embedding-no-cp}
	Fix attachment function $f$ consider the sequence of random trees $\set{\cT_f(m): m\geq 2 }$ constructed using attachment function $f$ rooted at $v_1$. Consider the continuous time construction $\{\widetilde{\BP}_f(t):t\geq 0\}$ as above. Then viewed as a sequence of growing random labelled rooted trees we have, $\{\widetilde{\BP}_f(\eta_m^{\sss(1,2)}): m\geq 2\} \stackrel{d}{=} \{\cT_f(m):m\geq 2\}.$
\end{lemma}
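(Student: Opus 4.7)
The plan is to argue by induction on $m$, leveraging the memoryless property of the exponential inter-arrival times making up $\xi_f$ together with the standard ``race of competing exponentials'' calculation; this is the Athreya--Karlin trick to which the paper already alludes. The base case $m=2$ is immediate since $\widetilde{\BP}_f(\eta_2^{\sss(1,2)}) = \widetilde{\BP}_f(0)$ is by construction the tree on $\{v_1,v_2\}$ joined by a single edge and rooted at $v_1$, which is exactly $\cT_f(2)$.

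For the inductive step, I would assume that the joint law of $(\widetilde{\BP}_f(\eta_k^{\sss(1,2)}))_{2\le k\le m}$ agrees with that of $(\cT_f(k))_{2\le k\le m}$ as growing labelled rooted trees, condition on $\cF(\eta_m^{\sss(1,2)})$, and analyse the next birth event. For each vertex $v\in \widetilde{\BP}_f(\eta_m^{\sss(1,2)})$, let $c(v)$ denote its current number of offspring in its defining branching process ($\BP_f^{\sss(1\downarrow)}$ if $v$ descends from $v_1$ and $\BP_f^{\sss(2\downarrow)}$ otherwise). By memorylessness and the construction \eqref{eqn:xi-f-def} of $\xi_f$, the residual time until $v$ produces its next offspring is distributed as $\exp(f(c(v)+1))$, independently across vertices. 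The crucial observation is that $c(v)+1$ coincides with the degree of $v$ in $\widetilde{\BP}_f(\eta_m^{\sss(1,2)})$: for $v_j$ with $j\ge 3$ the ``$+1$'' accounts for the edge to the parent of $v_j$, while for $v_1$ and $v_2$ it accounts for the edge joining them. Hence the instantaneous rate at which $v$ becomes the parent of the next arrival equals $f(\deg(v))$.

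A standard competing-exponentials calculation then yields that $\eta_{m+1}^{\sss(1,2)}-\eta_m^{\sss(1,2)}$ is exponential of rate $\sum_u f(\deg(u))$, and conditionally on this increment, the identity of the new parent is $v$ with probability $f(\deg(v))/\sum_u f(\deg(u))$, matching exactly the attachment rule defining $\cT_f(m+1)$ from $\cT_f(m)$. Continuity of the exponential law rules out simultaneous births, so precisely one new vertex is added and is labelled $v_{m+1}$ under the order-of-arrival convention, consistent with the indexing of $\cT_f(\cdot)$; this closes the induction. The only real (though entirely routine) obstacle is the bookkeeping that gives $c(v)+1 = \deg(v)$ uniformly across the edge cases $v=v_1,v_2$; once this identification is in hand, the fact that each $\eta_m^{\sss(1,2)}$ is almost surely finite (a consequence of non-explosion of $\widetilde{\BP}_f$ guaranteed by Assumption \ref{ass:attach-func}(ii)) allows the induction to run for all $m$.
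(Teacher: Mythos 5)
Your proposal is correct and is precisely the standard Athreya--Karlin competing-exponentials argument that the paper invokes when it states the lemma as "easy to check using properties of exponential distribution" (the paper gives no written proof). The key bookkeeping identity $c(v)+1=\deg(v)$, including the edge cases $v_1,v_2$, is handled correctly, so nothing is missing.
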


\begin{rem}
Note that multiplying the function $f$ by a scalar speeds up (or slows down) the continuous time branching process, but the law of the embedding $\{\widetilde{\BP}_f(\eta_m^{\sss(1,2)}): m\geq 2\}$ remains unaltered (as the associated stopping times also change). This is clear in the dynamics of the discrete time tree process as the attachment probabilities involve ratios of quantities involving $f$, which do not change on multiplying $f$ by a scalar.
\end{rem}

\section{Main Results}
\label{sec:main-res}

\subsection{Bounds for root finding algorithms and persistence}
\label{sec:res-bds-root-pers}

For the rest of this Section $\Psi$ will denote the Jordan centrality measure as in \eqref{eqn:psi-def} and for a fixed budget $K$, $H_{K,\Psi}(\cdot)$ will denote the corresponding Jordan centrality measure based root finding algorithm. For a given error tolerance $\eps$, let $K_{\Psi}(\eps)$ denote the smallest budget $K$ such that $H_{K,\Psi}(\cdot)$ is a root finding algorithm with error tolerance $\eps$ for the collection of random trees $\set{\cT_f(n): n\geq 2}$ (in the sense of Definition \ref{def:root-find}). If there is no finite $K$ satisfying \eqref{rootinflim}, we set $K_{\Psi}(\eps)= \infty$. The first theorem gives upper bounds on this budget $K_{\Psi}(\eps)$ to ensure recoverability of the root.   

\begin{theorem}[Budget sufficiency bounds] 
	\label{thm:upp-root-find}
	Suppose the attachment function $f$ satisfies Assumptions \ref{ass:attach-func} and \ref{ass:lim-sup}. 
	\begin{enumeratea}
		\item Suppose for some $\overline{C}_f>0$, $\beta \ge 0$, $f$ satisfies $f_{*} \leq f(i) \leq \overline{C}_f \cdot i + \beta$ for all $i\geq 1$. Then $\exists$ positive constants $C_1, C_2$ such that for any error tolerance $0< \eps <1$, the budget requirement satisfies, 
		\[K_{\Psi}(\eps)\leq \frac{C_1}{\eps^{(2\overline{C}_f+\beta)/f_{*}}}\exp(\sqrt{C_2\log{1/\eps}}).\]
		\item If further the attachment function $f$ is in fact bounded with $f(i)\leq f^{*}$ for all $i \ge 1$ then one has for any error tolerance $0< \eps <1$,
		\[K_{\Psi}(\eps)\leq \frac{C_1}{\eps^{f^*/f_{*}}}\exp(\sqrt{C_2\log{1/\eps}}).\] 
	\end{enumeratea}
\end{theorem}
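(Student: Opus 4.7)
The plan is to translate the root-recovery question into a branching-process statement via the CTBP embedding of Lemma \ref{lem:ctb-embedding-no-cp}, and then apply the paper's two headline technical tools: the exponential tail of the Malthusian limit $W_\infty$ and the sup-norm convergence rates of Theorem \ref{thm:expon-convergence}/Corollary \ref{cor:sup-bound}.

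First I would identify $\cT_f(n) \equald \widetilde{\BP}_f(T_n)$ with $T_n := \eta_n^{\sss(1,2)}$, write $Z_i(t) := |\BP_f^{\sss(i\downarrow)}(t)|$ with a.s.\ limits $W_i := \lim_{t\to\infty} e^{-\lambda^* t} Z_i(t)$, and for $v \ne v_1$ in $\widetilde{\BP}_f(T_n)$ let $D_v(T_n)$ denote the size of $v$'s descendant subtree in the CTBP. Two structural facts are crucial: $\Psi(v_1) \leq \max(Z_1(T_n), Z_2(T_n))$ (from the $\Psi$ decomposition at $v_1$ in the CTBP), and $\Psi(v) \geq n - D_v(T_n)$ for $v \ne v_1$ (since the ``upward'' subtree from $v$'s perspective has that size). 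On the event $A_\delta := \{\Psi(v_1)/n \leq 1 - \delta\}$, any rival $v \ne v_1$ of $v_1$ therefore satisfies $D_v(T_n) > \delta n$, so $\#\{v \ne v_1 : \Psi(v) < \Psi(v_1)\} \leq N_\delta := \#\{v : D_v(T_n) \geq \delta n\}$ on $A_\delta$. The budget question then decouples into controlling $\pr(A_\delta^c)$ and $\pr(N_\delta \geq K \mid A_\delta)$ for a well-chosen $\delta = \delta(\eps)$.

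For $\pr(A_\delta^c)$, using $\Psi(v_1) \leq \max(Z_1(T_n), Z_2(T_n))$ together with Corollary \ref{cor:sup-bound} to replace $Z_i(T_n)/n$ by $W_i/(W_1+W_2)$ up to sub-polynomial slack, one obtains $\pr(A_\delta^c) \lesssim \pr(\min(W_1, W_2)/(W_1+W_2) < 2\delta)$. Sandwiching the CTBP between Yule processes of rates $f_*$ and $f^*$ (resp., in case (a), between a Yule of rate $f_*$ and a linear-PA CTBP of rate at most $2\overline{C}_f + \beta$, where $\hat\rho$-monotonicity forces $\lambda^* \leq 2\overline{C}_f + \beta$) produces a small-ball bound of the form $\pr(W_\infty < w) \leq C w^{f_*/f^*}$ (resp.\ $w^{f_*/(2\overline{C}_f + \beta)}$), and so $\pr(A_\delta^c) \lesssim \delta^{f_*/f^*}$. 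For $\pr(N_\delta \geq K \mid A_\delta)$, I would use the representation $D_v(T_n) \equald Z^{(v)}(T_n - \tau_v)$ for an independent CTBP $Z^{(v)}$ and $\tau_v$ the birth time of $v$; the domination $Z^{(v)}(t) \stod Y(t)$ with $Y$ the appropriate upper-sandwich process together with the exponential tail on its normalized limit gives $\pr(D_v(T_n) \geq \delta n \mid \tau_v) \leq \exp(-c \delta n e^{-f^*(T_n - \tau_v)})$ (or the analogue in case (a)), which effectively restricts contributing vertices to those with $\tau_v \leq T_n - (f^*)^{-1}\log(\delta n)$. Combining with $n \sim e^{\lambda^* T_n}(W_1 + W_2)$, $Z_1(s) + Z_2(s) \sim e^{\lambda^* s}(W_1 + W_2)$, the exponential tail on $W_1 + W_2$ supplied by the paper, and a Markov bound, one deduces $N_\delta \leq C/\delta$ off an event of probability $\leq \eps/2$.

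Balancing the two error contributions by taking $\delta \asymp \eps^{f^*/f_*}$ makes $\pr(A_\delta^c) \leq \eps/2$ and yields $K \asymp \delta^{-1} \asymp \eps^{-f^*/f_*}$; the sub-polynomial factor $\exp(\sqrt{C_2 \log(1/\eps)})$ absorbs the quantitative convergence-rate slack from Corollary \ref{cor:sup-bound} together with the dependence on the truncation level $M$ for the event $\{W_1 + W_2 \leq M\}$ (whose complement contributes an $e^{-cM}$-type tail from the exponential moments). The hardest step, in my view, will be establishing the small-ball estimate $\pr(W_\infty < w) \lesssim w^{f_*/f^*}$ with an exponent matching the theorem: the paper already supplies the exponential \emph{upper} tail of $W_\infty$, but the lower-tail/small-ball behavior requires a separate argument via Laplace transforms of $W_\infty$ and comparison with the two sandwich processes, not supplied by the earlier statements. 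A secondary delicate point is ensuring the aggregated sub-polynomial slack really has the stretched-exponential form $\exp(\sqrt{\log(1/\eps)})$ rather than a polynomial correction, which will require the quantitative rate from Theorem \ref{thm:expon-convergence} in its sharp form.
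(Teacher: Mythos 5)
Your reduction of the event $\{v_1\notin H_{K,\Psi}\}$ is sound: on $A_\delta=\{\Psi(v_1)\le(1-\delta)n\}$ every rival of $v_1$ has descendant count at least $\delta n$, so the failure probability splits as $\pr(A_\delta^c)+\pr(N_\delta>K)$. This is a genuinely different decomposition from the paper's, which instead union-bounds over individual vertices $v_i$, $i>K$, comparing each against a \emph{logarithmic} threshold $M\log i$ rather than a fixed fraction $\delta n$. That difference is exactly where your first gap lies. The small-ball estimate $\pr(W_\infty<w)\lesssim w^{f_*/(2\overline{C}_f+\beta)}$ does not follow from sandwiching by Yule processes: the lower sandwich $\yu_{f_*}$ is normalized by $e^{-f_*t}$ while $W_\infty$ is the limit of $e^{-\lambda^* t}Z_f(t)$ with $\lambda^*>f_*$ whenever $f$ is not constant, so $e^{-\lambda^* t}|\yu_{f_*}(t)|\to 0$ and the comparison says nothing about the left tail of $W_\infty$. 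The paper never needs this estimate precisely because its threshold is logarithmic: in Lemma \ref{lem:no-too-small} the \emph{finite-time} geometric law of $\yu_{f_*}$, evaluated at the concentrated stopping time $\widetilde\eta_j\approx\log j/(2\overline{C}_f+\beta)$, already yields $M\log j\cdot j^{-f_*/(2\overline{C}_f+\beta)}$ (the time-concentration being the source of the $e^{\sqrt{\log}}$ factor), whereas the same computation with threshold $\delta n$ gives $\delta n\cdot n^{-f_*/(2\overline{C}_f+\beta)}\to\infty$. To make your route work you would need genuine left-tail control of $W_\infty$, e.g.\ negative moments extracted from the recursion \eqref{eqn:rde-winf} — a nontrivial piece of machinery not supplied by any of the paper's stated results.

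The second gap is quantitative and, as written, fatal to the stated exponent. Your bound $\E[N_\delta]\lesssim 1/\delta$ is plausible, but a Markov bound then gives only $\pr(N_\delta>K)\le C/(\delta K)$, not ``$N_\delta\le C/\delta$ off an event of probability $\eps/2$''; the latter would require concentration of $N_\delta$, which is delicate since the events $\{D_u(T_n)\ge\delta n\}$ are nested along ancestral lines. Balancing $C/(\delta K)$ against $\pr(A_\delta^c)\lesssim\delta^\alpha$ with $\alpha=f_*/(2\overline{C}_f+\beta)$ forces $K(\eps)\asymp\eps^{-(1+1/\alpha)}$, a full factor $\eps^{-1}$ worse than the theorem (for uniform attachment, $\eps^{-2}$ instead of $\eps^{-1}$). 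The paper's per-vertex union bound avoids any such counting variable: each term $\pr(\Psi_{\cT^\circ(n)}(v_i)\le\Psi_{\cT^\circ(n)}(v_1))$ is made $O(i^{-C_2^\star M})$ in Lemma \ref{lem:term-i}, with the free parameter $M$ chosen so large that the sum over $i>K$ is dominated by the single event $\cS_K^M$. You would need either to import that structure or to prove real concentration for $N_\delta$ to recover the bound as stated.
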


A natural question then is if there are qualitatively similar lower bounds for the budget if one does use Jordan centrality. This is the focus of the next result. 

\begin{theorem}[Budget necessary bounds] 
	\label{thm:low-root-find}
	Suppose the attachment function $f$ satisfies Assumptions \ref{ass:attach-func} and \ref{ass:lim-sup}. 
	\begin{enumeratea}
		\item If $\exists$ $\underline{C}_f> 0$ and $\beta\geq 0$ such that $f(i)\geq \underline{C}_f \cdot i + \beta$ for all $i\geq 1$ then $\exists$ a positive constant $C_1^\prime$ such that for any error tolerance $0< \eps <1$, 
		\[K_{\Psi}(\eps) \geq \frac{C_1^\prime}{\eps^{(2\underline{C}_f+\beta)/f(1)}}.\]
		\item For general $f$ one has for any error tolerance $0< \eps <1$,
	\[K_{\Psi}(\eps) \geq \frac{C_1^{\prime}}{\eps^{f_*/f(1)}}.\]
	\end{enumeratea}
	   \end{theorem}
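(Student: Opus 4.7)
The plan is to use the CTBP embedding to construct, for each small $\eps>0$, an event of probability $\gtrsim \eps$ on which at least $\Omega(\eps^{-\gamma})$ vertices of $\cT_f(n)$ beat $v_1$ in Jordan centrality, where $\gamma=(2\underline{C}_f+\beta)/f(1)$ in (a) and $\gamma=f_*/f(1)$ in (b); by the definition of $K_\Psi(\eps)$ this forces the claimed lower bound. I work in the embedded space $\cT_f(n)^\circ\equald\widetilde{\BP}_f(\eta_n^{\sss(1,2)})^\circ$ with two independent CTBPs $\BP_f^{\sss(1\downarrow)}$, $\BP_f^{\sss(2\downarrow)}$ of sizes $Z_1(t), Z_2(t)$ and limits $W_\infty^{\sss(i)}=\lim_t e^{-\lambda^* t}Z_i(t)>0$.

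\textbf{Reduction.} For $v\in\BP_f^{\sss(2\downarrow)}$, let $S_v$ denote its rooted-at-$v_1$ subtree size. A direct computation from \eqref{eqn:psi-def} shows that on the event $\Psi(v_1)=Z_2(\eta_n)$, one has $\Psi(v)<\Psi(v_1)\iff S_v>Z_1(\eta_n)$; the potential obstruction from $v$'s child-subtrees is automatic because each such $S_c\leq Z_2(\eta_n)-1$. Hence it suffices to lower bound $N_n:=\#\set{v\in\BP_f^{\sss(2\downarrow)}:S_v(\eta_n)>Z_1(\eta_n)}$.

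\textbf{Imbalance event and counting.} For $T>0$, let $A_T=\set{\sigma_1^{\sss(v_1)}>T}$ be the event that $v_1$ has produced no offspring in $[0,T]$; as $\sigma_1^{\sss(v_1)}\sim\exp(f(1))$, $\pr(A_T)=e^{-f(1)T}$. The memoryless property yields $\BP_f^{\sss(1\downarrow)}(T+\cdot)\mid A_T\equald \BP_f^{\sss(1\downarrow)}$, whence $W_\infty^{\sss(1)}\mid A_T\equald e^{-\lambda^* T}W'$ with $W'$ an independent copy of $W_\infty$. Writing $x:=e^{-\lambda^*T}$, we have $\pr(A_T)=x^{f(1)/\lambda^*}$. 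Intersecting with the independent positive-probability event $B=\set{W'/W_\infty^{\sss(2)}\in[\tfrac{1}{2},2]}$, on $A_T\cap B$ we have $Z_1(\eta_n)/n\to xW'/W_\infty^{\sss(2)}\asymp x$. Next, for $v\in\BP_f^{\sss(2\downarrow)}$ born at time $\tau_v$, $S_v/Z_2\to e^{-\lambda^*\tau_v}W_\infty^{\sss(v)}/W_\infty^{\sss(2)}$, and using the right-tail bound $\pr(W_\infty>y)\lesssim e^{-\alpha y}$ proved earlier in the paper together with the substitution $u=xe^{\lambda^*\tau}$,
\begin{equation*}
\E[N_n\mid A_T\cap B]\asymp\int_0^\infty e^{\lambda^*\tau}\pr\!\big(W_\infty^{\sss(v)}>c\,xe^{\lambda^*\tau}\big)\,d\tau\asymp \frac{1}{x}.
\end{equation*}
A Paley-Zygmund argument, supported by the uniform CMJ convergence rates also established in the paper (which control the cross-correlations between $\set{S_v/Z_2}$ for different $v$), promotes this expected-value bound to $N_n\gtrsim 1/x$ with positive conditional probability on $A_T\cap B$.

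\textbf{Optimization.} Combining the two estimates gives $\pr(N_n\geq c/x)\gtrsim x^{f(1)/\lambda^*}$; setting $K=c/x$ yields $\liminf_n \pr(N_n\geq K)\gtrsim K^{-f(1)/\lambda^*}$, so $\liminf_n \pr(N_n\geq K)\geq\eps$ whenever $K\lesssim\eps^{-\lambda^*/f(1)}$. Under (a), $f\geq\underline{g}(i):=\underline{C}_f\cdot i+\beta$ implies $\lambda^*(f)\geq\lambda^*(\underline{g})=2\underline{C}_f+\beta$ by monotonicity of the Malthusian parameter in $f$, yielding $K_\Psi(\eps)\geq C_1'/\eps^{(2\underline{C}_f+\beta)/f(1)}$; under (b), $\lambda^*(f)\geq f_*$ (since $\hat{\rho}(f_*)\geq \sum_k(f_*/(2f_*))^k=1$) yields $K_\Psi(\eps)\geq C_1'/\eps^{f_*/f(1)}$. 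The principal technical obstacle is the second-moment estimate supporting $N_n\asymp 1/x$: controlling the joint fluctuations of subtree-fraction statistics uniformly in the vertex label is where the CMJ convergence-rate machinery is indispensable.
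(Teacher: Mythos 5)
Your route is genuinely different from the paper's, and its skeleton is sound: the reduction of $\set{\Psi(v)<\Psi(v_1)}$ to $\set{S_v>Z_1(\eta_n)}$ is exactly Lemma \ref{lem:jord-cent-det}(a); the identities $\pr(A_T)=e^{-f(1)T}=x^{f(1)/\lambda^*}$ and $W_\infty^{(1)}\mid A_T\equald e^{-\lambda^* T}W'$ are correct; and the Malthusian comparisons $\lambda^*(f)\ge 2\underline{C}_f+\beta$ (monotonicity of $\hat\rho$ in $f$ plus the exact value for affine $f$) and $\lambda^*(f)\ge f_*$ are both valid, so your exponent $\lambda^*/f(1)$ dominates both exponents in the statement and would, if the argument were completed, actually \emph{sharpen} the theorem. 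The paper proceeds differently: it conditions on the discrete event $\cA_K=\set{v_1\text{ is a leaf in }\cT_f(K)}$, lower-bounds $\pr(\cA_K)$ by a direct product over attachment steps (Proposition \ref{prop:ak-lower}), and then shows that, conditionally on $\cA_K$, at least $\lfloor K/4\rfloor$ of the first $K$ vertices eventually beat $v_1$ with probability bounded below \emph{uniformly in $K$} (Proposition \ref{prop:leaf-lower}). The decisive structural difference is that the paper only asks a constant fraction of the first $K$ vertices to win: their lower-bounding subtree limits $\LW^{(2)},\dots,\LW^{(K)}$ are conditionally i.i.d.\ and independent of $W_\infty^{(1)}$, so a one-line Azuma/Bernoulli bound suffices and no second moment of any vertex count is ever needed.

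The gap in your proposal is precisely the step the paper's design avoids: the uniform-in-$T$ concentration $\pr(N_n\gtrsim 1/x\mid A_T\cap B)\ge c_0>0$. As written this is asserted, not proved, and it does not follow from the convergence-rate results in the paper. Two concrete obstacles: (i) the indicators $\ind\set{S_v>Z_1}$ are strongly positively correlated — the winning set is a subtree of $\BP_f^{\sss(2\downarrow)}$ containing $v_2$ (if $w$ descends from $v$ then $S_w\le S_v$), and all indicators are coupled through $Z_1$ and through the random birth-time point process — so the bound $\E[N^2\mid A_T\cap B]\lesssim(\E[N\mid A_T\cap B])^2$ needed for Paley--Zygmund requires a genuine computation (the ancestor--descendant pairs contribute roughly $T\,\E N\asymp x^{-1}\log(1/x)$, which is dominated by $(\E N)^2\asymp x^{-2}$, but this must be checked, as must the disjoint-pair contribution via second moments of the reproduction functionals); (ii) conditioning on $B=\set{W'/W_\infty^{(2)}\in[1/2,2]}$ reweights the law of $\BP_f^{\sss(2\downarrow)}$, since $W_\infty^{(2)}$ is a functional of the very tree whose count you are concentrating, which invalidates the ``independent $W_\infty^{(v)}$'' structure your first- and second-moment computations rely on; you should instead condition only on an event of the independent first tree, e.g.\ $\set{W_\infty^{(1)}\le cx}$, and run the moment method on the unconditioned second tree. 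These defects are repairable, but until the second-moment estimate is carried out the proof is incomplete at its crux.
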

Applying the above results for the special cases described in Section \ref{sec:model} result in the following bounds. 
\begin{corollary}[Special cases]
	For specific attachment functions the budget requirements for root finding algorithms using Jordan centrality satisfy:
	\label{cor:root-find-special-cases}
	\begin{enumeratea}
		\item {\bf Uniform attachment:} $\frac{C_1^\prime}{\eps}\leq K_{\Psi}(\eps) \leq \frac{C_1}{\eps} \exp(\sqrt{C_2\log{\frac{1}{\eps}}})$. 
		\item {\bf Pure Preferential attachment:} 
		\[\frac{C_1^\prime}{\eps^{2}} \leq K_{\Psi}(\eps) \leq \frac{C_1}{\eps^2} \exp(\sqrt{C_2\log{\frac{1}{\eps}}}) . \]
		\item {\bf Affine preferential attachment:} 
		\[\frac{C_1^\prime}{\eps^{\frac{2+\beta}{1+\beta}}} \leq K_{\Psi}(\eps) \leq \frac{C_1}{\eps^{\frac{2+\beta}{1+\beta}}} \exp(\sqrt{C_2\log{\frac{1}{\eps}}}) . \]
		\item {\bf Sublinear preferential attachment:}
		\[\frac{C_1^\prime}{\eps} \leq K_{\Psi}(\eps) \leq \frac{C_1}{\eps^2} \exp(\sqrt{C_2\log{\frac{1}{\eps}}}) . \]
	\end{enumeratea}
\end{corollary}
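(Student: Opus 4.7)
The plan is a direct application of Theorems \ref{thm:upp-root-find} and \ref{thm:low-root-find} to each of the four specific attachment functions, after checking the hypotheses and identifying the optimal constants. All four models satisfy Assumptions \ref{ass:attach-func} and \ref{ass:lim-sup} by Lemma \ref{lem:limfi-i-suff} (uniform and bounded attachment via (ii), while pure, affine and sublinear preferential attachment satisfy (i) with $C^{*}=1,1,0$ respectively), so both theorems are available. The only substantive content of the corollary is the choice of $\overline{C}_f,\underline{C}_f,\beta,f_{*}$ in each model.

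For uniform attachment ($f\equiv 1$) the function is bounded with $f^{*}=f_{*}=f(1)=1$, so Theorem \ref{thm:upp-root-find}(b) gives exponent $f^{*}/f_{*}=1$ and Theorem \ref{thm:low-root-find}(b) gives exponent $f_{*}/f(1)=1$; the bounds match. For pure preferential attachment $f(k)=k$ and affine preferential attachment $f(k)=k+\beta$, the function is exactly (affine) linear, so one may take $\overline{C}_f=\underline{C}_f=1$ with the same shift $\beta$ (zero in the pure case) and $f_{*}=f(1)=1+\beta$. Theorems \ref{thm:upp-root-find}(a) and \ref{thm:low-root-find}(a) then both produce exponent $(2\cdot 1+\beta)/(1+\beta)=(2+\beta)/(1+\beta)$, which specializes to $2$ when $\beta=0$ (pure case) and recovers the stated matching bounds in both models.

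The sublinear case $f(k)=k^{\alpha}$, $\alpha\in(0,1)$, requires slightly more care and is the only place where the accounting is interesting. On the upper-bound side we exploit $k^{\alpha}\leq k$ for $k\geq 1$ to invoke Theorem \ref{thm:upp-root-find}(a) with $\overline{C}_f=1$, $\beta=0$, $f_{*}=f(1)=1$, giving exponent $2$. On the lower-bound side, however, there is no positive $\underline{C}_f$ for which $k^{\alpha}\geq \underline{C}_f\,k$ holds for all $k\geq 1$, so part (a) of Theorem \ref{thm:low-root-find} is unavailable; one must instead apply the general bound (b), with $f_{*}=f(1)=1$, which yields exponent $1$. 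This gap between exponent $2$ on the upper side and exponent $1$ on the lower side is precisely the source of the non-matching bounds in the sublinear case, and it is the only point that is not mechanical. No genuine technical obstacle arises: the entire corollary reduces to bookkeeping of the constants already identified in the two preceding theorems.
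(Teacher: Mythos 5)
Your proposal is correct and takes essentially the same route as the paper, whose entire proof is the one-line remark that the corollary follows by directly applying Theorems \ref{thm:upp-root-find} and \ref{thm:low-root-find}; your bookkeeping of $\overline{C}_f,\underline{C}_f,\beta,f_*,f(1)$ in each case, including the observation that the sublinear case must fall back on part (b) of Theorem \ref{thm:low-root-find} (which is exactly why those bounds do not match), is the intended argument spelled out in more detail than the paper provides.
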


The above Corollary follows by directly applying Theorems \ref{thm:upp-root-find} and \ref{thm:low-root-find}.  

\begin{rem}
	While (b) in the above Corollary is a special case of (c), we included the additional case to highlight the sensitive dependence on $\beta$ of the above bounds. In particular as $\beta \to \infty$ upto first order $K_{\Psi}(\eps) \asymp \eps^{-1}$ which is the same as one has in the uniform attachment case.  
\end{rem}

The next result establishes robustness of the estimates using Jordan centrality. For centrality measure $\Psi$ and $K\geq 1$, recall the notion of $(\Psi, K)$ persistence in Definition \ref{def:pers}.  

\begin{theorem}[Persistence]
	\label{thm:persistence}
	Let $\Psi$ denote the Jordan centrality as in \eqref{eqn:psi-def}.  Under Assumptions \ref{ass:attach-func} and \ref{ass:lim-sup}, for any $K\geq 1$, the sequence of growing random trees $\set{\cT_f(n):n\geq 1}$ is $(\Psi, K)$ persistent. 
\end{theorem}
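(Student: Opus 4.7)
The plan is to transfer the question to the Crump-Mode-Jagers branching process $\widetilde{\BP}_f$ via the Athreya-Karlin embedding (Lemma \ref{lem:ctb-embedding-no-cp}) and then combine two ingredients: (a) almost-sure convergence of the normalized Jordan centrality at each vertex to a well-defined limit, and (b) a Borel-Cantelli argument showing that late-born vertices are almost never among the $K$ most Jordan-central ones. Writing $T_n := \eta_n^{\sss(1,2)}$, $\tau_v$ for the birth time of $v$, and $Z^{(v)}(\cdot)$ for the size of the sub-branching-process rooted at $v$, removing $v$ from $\cT_f(n)=\widetilde{\BP}_f(T_n)$ produces components of sizes $\{Z^{(u)}(T_n-\tau_u):u\text{ child of }v\}$ together with the ``upward'' component of size $n-Z^{(v)}(T_n-\tau_v)$ (with the obvious modification at $v_1,v_2$). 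Applying Theorem \ref{prop:convg-limit} termwise yields, for each fixed $v$,
$$e^{-\lambda^* T_n}\Psi_{\cT_f(n)}(v)\;\xrightarrow{\text{a.s.}}\;\overline{\Psi}(v):=\max\{\,W_\infty-e^{-\lambda^*\tau_v}W_\infty^{(v)},\ \max_{u\text{ child of }v}e^{-\lambda^*\tau_u}W_\infty^{(u)}\,\},$$
where $W_\infty^{(v)}$ is the Malthusian limit of the CMJ rooted at $v$ and $W_\infty=W_\infty^{(1)}+W_\infty^{(2)}$ is the global one.

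The heart of the argument is the summability of $\pr(A_n)$, where
$$A_n := \{\,\exists\,j\geq n:\ v_n\ \text{is among the}\ K\ \text{smallest values of}\ \Psi_{\cT_f(j)}\,\}.$$
Since $\tau_{v_n}=T_n\to\infty$, one has $e^{\lambda^*\tau_{v_n}}\asymp n$, and $v_n\in A_n$ would force the descendant process $Z^{(v_n)}(T_j-\tau_{v_n})$ to be comparable to $|\cT_f(j)|$ at some $j\geq n$, which in turn forces $W_\infty^{(v_n)}$ to be of order $n$. The branching property makes $W_\infty^{(v_n)}$ independent of $\cF_{\tau_{v_n}}$, and the exponential-moment bound on $W_\infty^{(v)}$ established as a standalone technical contribution of the paper yields $\pr(W_\infty^{(v_n)}\geq cn\mid\cF_{\tau_{v_n}})\leq e^{-c'n}$. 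To convert this pointwise control into the required uniform-in-$j$ statement I would invoke the sup-norm rate-of-convergence estimates of Theorem \ref{thm:expon-convergence} and Corollary \ref{cor:sup-bound}, applied both to the descendant CMJ of $v_n$ and to the complementary piece of the tree; this controls $\sup_{t\geq s}|e^{-\lambda^*(t-\tau_{v_n})}Z^{(v_n)}(t-\tau_{v_n})-W_\infty^{(v_n)}|$ on exponentially unlikely events, and in particular upgrades the fixed-time heuristic into the ``for all $j\geq n$'' statement needed for $A_n$. The competing benchmark, namely that the $K$-th smallest $\overline{\Psi}(v)$ is strictly below $W_\infty$ almost surely, is supplied by exhibiting at least $K$ ``spine'' vertices born by some deterministic time $t_0(K)$ whose limiting centralities lie strictly below $W_\infty$ (using positivity and joint absolute continuity of the $W_\infty^{(v)}$ obtainable from CMJ theory). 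These steps together give $\sum_n \pr(A_n)<\infty$.

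Borel-Cantelli then produces a (random) $n_0<\infty$ a.s.\ such that no $A_n$ occurs for $n\geq n_0$; consequently, for every $j\geq n_0$ the top-$K$ vertices of $\cT_f(j)$ lie in the finite set $\{v_1,\ldots,v_{n_0-1}\}$. On this finite set, $e^{-\lambda^*T_j}\Psi_{\cT_f(j)}(v)\to\overline{\Psi}(v)$ almost surely, and the $\overline{\Psi}(v)$'s are maxima over finitely many affine functionals of jointly absolutely continuous Malthusian limits, so their pairwise differences are almost surely nonzero. Hence the relative order on the top-$K$ candidates fixates, delivering the $(\Psi,K)$-persistence asserted in Definition \ref{def:pers}. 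The main obstacle is precisely this uniform-in-$j$ tail estimate for $A_n$: a naive fixed-$j$ bound is not enough to prevent a late-born vertex from transiently becoming one of the most central, and it is exactly the sup-norm rate of convergence together with the exponential tails for $W_\infty$ -- the paper's standalone branching-process inputs -- that make the Borel-Cantelli step viable.
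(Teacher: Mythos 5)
Your proposal follows essentially the same route as the paper's proof: embed into the CMJ process, use Borel--Cantelli to show that the set of vertices that are ever among the $K$ most Jordan-central is a.s.\ finite, supply a benchmark guaranteeing the $K$-th smallest $\Psi$-value stays proportionally bounded away from $n$ (the paper's Lemma \ref{maxKcentral}, proved by tracking the $K$ subtrees rooted at $v_1,\dots,v_K$), and conclude fixation of the relative order from the a.s.\ non-equality of the independent, non-atomic limits $W_\infty^{(i)}$. One step is under-specified in a way worth flagging. For the summability of $\pr(A_n)$ you need a uniform-in-time \emph{lower} bound of order $n$ on the normalized size of the complementary piece of the tree, but Theorem \ref{thm:expon-convergence} and Corollary \ref{cor:sup-bound} give only \emph{upper} control of a single CMJ started from one individual, whereas the complementary piece is a forest of $\Theta(n)$ degree-shifted processes. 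The paper supplies the missing ingredient via the lower-bounding process of Lemma \ref{lem:lower-bd}: $\underline{W}_\infty=\liminf_t e^{-\lambda t}\LZF(t)>0$ a.s., so summing $\lfloor(1-\zeta)n\rfloor$ conditionally i.i.d.\ copies of $\inf_t e^{-\lambda t}\LZF^{(j)}(t)$ and applying Azuma--Hoeffding gives a lower bound $\gtrsim(1-\zeta)n\underline{\mu}$ valid for all $t$ simultaneously; the deterministic count $(1-\zeta)n$ comes precisely from your benchmark step. With that inserted, the resulting bound on $\pr(A_n)$ is $O(\log n/n^2)$ from Corollary \ref{cor:sup-bound} applied to $v_n$'s subtree, not the exponential rate you quote --- the exponential tail of $W_\infty$ controls the limit but not $\sup_{t\ge0}e^{-\lambda t}Z^{(v_n)}(t)$ --- but this is still summable, so the Borel--Cantelli conclusion and the rest of your argument stand.
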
  

\subsection{Branching process limits}
\label{sec:res-bp-limits}

Implicitly or explicitly, the main tool in the analysis of the Jordan centrality for growing random tree models are limits of appropriately normalized sizes of the associated branching processes. More precisely standard results about such branching processes \cite{jagers1984growth} imply the following result. 

\begin{theorem}\label{prop:convg-limit}
	Under Assumptions \ref{ass:attach-func} and \ref{ass:lim-sup} there exists a strictly positive random variable $W_\infty$ such that with $\lambda^*$ denoting the Malthusian rate of growth parameter \eqref{eqn:malthus-def},
	\begin{equation}
	\label{eqn:bp-convg-winf}
		 e^{-\lambda^* t} Z_f(t) \stackrel{a.s., \bL^2}{\longrightarrow} W_\infty, \qquad \mbox{ as } t\to\infty.
	\end{equation} 
	Further $W_\infty$ is an absolutely or singular continuous random variable supported on all of $\bR_+$. 
\end{theorem}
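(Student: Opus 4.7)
The plan is to decompose the statement into three pieces: almost-sure convergence, $L^2$ convergence, and continuity/support properties of the limiting distribution, each handled by standard supercritical CMJ-branching-process techniques (cf.\ \cite{nerman1981convergence,jagers1984growth}).

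For almost-sure convergence, first observe that under Assumption \ref{ass:attach-func}(iii) the function $\hat\rho$ is continuous, strictly decreasing on $(\underline\lambda,\infty)$, satisfies $\lim_{\lambda\downarrow\underline\lambda}\hat\rho(\lambda)>1$, and tends to $0$ as $\lambda\to\infty$, so $\lambda^*$ defined by \eqref{eqn:malthus-def} exists uniquely. The conclusion $e^{-\lambda^* t}Z_f(t) \convas W_\infty$ is then precisely Nerman's theorem, provided an $x\log^+x$-type integrability condition on the Malthusian-weighted offspring measure $e^{-\lambda^* x}\xi_f(dx)$ holds. Under Assumption \ref{ass:lim-sup} ($\limsup_i f(i)/i < \lambda^*$), the birth-time increments $E_i\sim \exp(f(i))$ eventually satisfy $\E[e^{-\lambda^* E_i}]=f(i)/(\lambda^*+f(i))$ bounded away from $1$, so the $\lambda^*$-weighted offspring process has geometrically decaying tail and all required moments exist.

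For $L^2$ convergence, condition on the root's offspring times $\sigma_1 < \sigma_2 < \cdots$ and note that the subtrees descended from the root's children are i.i.d.\ copies of $\BP_f$. Taking $t \to \infty$ in $e^{-\lambda^* t} Z_f(t) = e^{-\lambda^* t} + \sum_{k:\sigma_k\le t} e^{-\lambda^*\sigma_k}\cdot e^{-\lambda^*(t-\sigma_k)} Z_f^{(k)}(t-\sigma_k)$ yields the recursive distributional equation
\begin{equation}\label{eq:RDE-W}
W_\infty \stackrel{d}{=} \sum_{k=1}^\infty e^{-\lambda^*\sigma_k}\, W_\infty^{(k)},
\end{equation}
with $\{W_\infty^{(k)}\}$ i.i.d.\ copies of $W_\infty$ independent of $\{\sigma_k\}$. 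Squaring \eqref{eq:RDE-W}, taking expectations, and using independence gives a linear equation in $\E[W_\infty^2]$ whose coefficient is $\hat\rho(2\lambda^*)$; since $\hat\rho$ is strictly decreasing and $\hat\rho(\lambda^*)=1$ we have $\hat\rho(2\lambda^*) < 1$, yielding $\E[W_\infty^2]<\infty$. The intrinsic CMJ martingale whose limit is $W_\infty$ is thus $L^2$-bounded, which combined with a.s.\ convergence delivers the $L^2$ statement.

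For positivity and continuity, note first that $\pr(W_\infty>0)=1$ since $\xi_f(\infty)=\infty$ a.s.\ (individuals reproduce forever), so the extinction event of the branching process is null and the fixed-point equation for $\pr(W_\infty=0)$ derived from \eqref{eq:RDE-W} forces $\pr(W_\infty=0)=0$. For non-atomicity, the first summand $e^{-\lambda^* E_1} W_\infty^{(1)}$ in \eqref{eq:RDE-W} has no atoms because $E_1\sim\exp(f(1))$ has a continuous density on $(0,\infty)$ and $W_\infty^{(1)}>0$ a.s.; convolving with the independent remainder preserves this, so the law of $W_\infty$ is either absolutely continuous or singular continuous on $(0,\infty)$. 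To show the support is all of $\bR_+$, iterate \eqref{eq:RDE-W} and observe that appropriate configurations of the $\sigma_k$ can place the random sum in any prescribed open subinterval of $\bR_+$ with positive probability. The main obstacle will be this last step: while non-atomicity follows cleanly from the density of $E_1$, verifying positive $W_\infty$-mass on every open interval—including small-ball estimates near $0$ and tail estimates near $\infty$—requires a careful iteration argument for the RDE rather than being immediate from the convergence alone.
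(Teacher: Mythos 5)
Your architecture tracks the paper's (Nerman/Jagers for the a.s.\ and $\bL^2$ convergence, the recursive equation \eqref{eqn:rde-winf} for the distributional properties), but two steps as written would fail. The first and most serious concerns the moment condition that everything else hangs on: finiteness of moments of $Y=\sum_k e^{-\lambda^*\sigma_k}$, which is needed both to verify Nerman's $x\log^+x$-type condition and to make the inhomogeneous term in your linear equation for $\E[W_\infty^2]$ finite (squaring the RDE produces cross terms bounded by $\E[Y^2](\E W_\infty)^2$, so $\hat\rho(2\lambda^*)<1$ alone does not suffice). Your justification --- that $\E[e^{-\lambda^*E_i}]=f(i)/(\lambda^*+f(i))$ is ``bounded away from $1$,'' whence geometric decay of the weighted offspring tail --- is simply false whenever $f$ is unbounded: for pure preferential attachment $f(i)=i$ one has $i/(\lambda^*+i)\to 1$. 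Assumption \ref{ass:lim-sup} only gives $f(i)\le(\lambda^*-\delta)i$ for large $i$, which yields the \emph{polynomial} product bound $\E[e^{-p\lambda^*\sigma_k}]\le\left(\frac{p\mu+m_0}{p\mu+k}\right)^{p\mu}$ with $\mu=\lambda^*/(\lambda^*-\delta)>1$ (this is \eqref{eqn:prod-bdkmu}); combined with Minkowski's inequality this does give $\E[Y^p]<\infty$ for every $p$, and the paper's Proposition \ref{prop:exp-mom-y} upgrades this to exponential moments by a delicate induction. So the conclusion you need is true, but the argument you give for it is wrong precisely at the point where Assumption \ref{ass:lim-sup} has to do its work.

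The second gap is in the distributional claims. Your non-atomicity argument treats $e^{-\lambda^*E_1}W_\infty^{(1)}$ and the remaining sum as independent, but they are not: every $\sigma_k$ with $k\ge 2$ contains $E_1$, and a non-atomic random variable plus a \emph{dependent} remainder can perfectly well have atoms. (The fix is to write $W_\infty\stackrel{d}{=}e^{-\lambda^*E_1}V$ with $V>0$ independent of $E_1$.) More substantively, ``absolutely or singular continuous'' is a pure-type dichotomy, not mere non-atomicity, and the full-support claim is, as you yourself concede, left unproved; the paper obtains both from \cite[Theorems 1 and 2]{biggins1979continuity} applied to \eqref{eqn:rde-winf}. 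As it stands, the proposal establishes neither.
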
 
The following technical results about the branching process form the core of the proofs of the previous two sections and are of independent interest. 

\begin{theorem}[Exponential tails]\label{thm:exponential-mom}
	Under Assumptions \ref{ass:attach-func} and \ref{ass:lim-sup} the branching process limit $W_\infty$ in Theorem \ref{prop:convg-limit} has a finite moment generating function in an open interval about the origin. 
\end{theorem}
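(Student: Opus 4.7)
The plan is to derive a recursive distributional equation (RDE) for $W_\infty$ and then establish factorial-type moment bounds $\E[W_\infty^p] \le A^p\, p!$ by induction; exponential moments then follow immediately by summing the resulting Taylor series.

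First I would establish the RDE by decomposing the CTBP at its root. The individual initially at the root produces offspring at times $\sigma_1 < \sigma_2 < \cdots$ forming $\xi_f$, and the subtree rooted at the $i$-th child is an independent copy of $\{\BP_f(\cdot)\}$ shifted by $\sigma_i$. Thus $Z_f(t) = 1 + \sum_i \ind_{\{\sigma_i \le t\}}\, Z_f^{(i)}(t - \sigma_i)$ with $Z_f^{(i)}$ i.i.d.\ copies of $Z_f$. Multiplying by $e^{-\lambda^* t}$ and passing to the limit $t \to \infty$ using Theorem \ref{prop:convg-limit} gives
\[W_\infty \equald \sum_{i=1}^\infty e^{-\lambda^* \sigma_i} W_\infty^{(i)},\]
with $(W_\infty^{(i)})$ i.i.d.\ copies of $W_\infty$ independent of $\xi_f$.

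Next I would prove by induction on $p$ that $\E[W_\infty^p] \le A^p\, p!$ for a universal constant $A > 0$. For $p = 2$, finiteness from Theorem \ref{prop:convg-limit} permits choosing $A$ with $\E[W_\infty^2] \le 2A^2$. For the inductive step, expanding the RDE multinomially and using independence of $(E_j)$ in $\sigma_i = E_1 + \cdots + E_i$ yields the closed form
\[\E[W_\infty^p] \;=\; \sum_{\vec{p}:\,\sum_i p_i = p} \binom{p}{\vec{p}} \left(\prod_j \frac{f(j)}{\lambda^* q_j + f(j)}\right) \prod_i \E[W_\infty^{p_i}], \qquad q_j := \sum_{i \ge j} p_i.\]
Isolating the diagonal terms (those $\vec{p}$ with a single $p_{i_0} = p$), whose contribution is exactly $\hat{\rho}(\lambda^* p)\,\E[W_\infty^p]$, I rearrange to $(1 - \hat{\rho}(\lambda^* p))\, \E[W_\infty^p] = R_p$, where $R_p$ collects the non-diagonal contributions. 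Strict monotonicity of $\hat{\rho}$ with $\hat{\rho}(\lambda^*) = 1$ gives $1 - \hat{\rho}(\lambda^* p) \ge \delta > 0$ uniformly for $p \ge 2$. Applying the inductive hypothesis $\E[W_\infty^{p_i}] \le A^{p_i} p_i!$ (valid as all $p_i < p$ in $R_p$) together with $\binom{p}{\vec{p}} \prod_i p_i! = p!$ gives
\[R_p \;\le\; A^p\, p! \cdot \Sigma_p, \qquad \Sigma_p := \sum_{\vec{p}\,\text{non-diag}} \prod_j \frac{f(j)}{\lambda^* q_j + f(j)},\]
so the induction closes provided $\Sigma_p \le 1 - \hat{\rho}(\lambda^* p)$ for $p$ large, with smaller-$p$ defects absorbed by taking $A$ sufficiently large. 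Summing then yields $\E[e^{\theta W_\infty}] = \sum_p (\theta^p/p!)\, \E[W_\infty^p] \le \sum_p (A\theta)^p < \infty$ for $\theta < 1/A$.

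The hard part will be the uniform estimate on $\Sigma_p$. My plan is to parametrize each non-diagonal $\vec{p}$ by its non-increasing step sequence $(q_j)$ with $q_1 = p$ descending to $0$, split at the first index $j^*$ where $q_{j^*} < p$, and recursively peel off the leading factor $\prod_{j < j^*} f(j)/(\lambda^* p + f(j))$ alongside shifted tail sums $\hat{\rho}_k(\lambda) := \sum_m \prod_{j=k+1}^{k+m} f(j)/(\lambda + f(j))$ arising at smaller $q$-levels. Assumption \ref{ass:lim-sup} is essential here: the inequality $\limsup_{i \to \infty} f(i)/i < \lambda^*$ forces $\hat{\rho}(\lambda) < \infty$ on a range of $\lambda$ strictly below $\lambda^*$, providing summable shifted tails with sufficient geometric suppression to absorb the combinatorial growth in the recursion and close the induction with a single universal constant $A$.
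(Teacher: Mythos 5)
Your overall architecture (derive the RDE for $W_\infty$, then control moments) starts the same way as the paper, and the multinomial identity with the diagonal extraction giving $(1-\hat{\rho}(\lambda^* p))\,\E[W_\infty^p]=R_p$ is algebraically correct. But the step on which everything rests — ``the induction closes provided $\Sigma_p\le 1-\hat{\rho}(\lambda^* p)$ for $p$ large'' — fails: that inequality is \emph{false} for admissible attachment functions. Note first that $\Sigma_p+\hat{\rho}(\lambda^* p)=\E\bigl[h_p(e^{-\lambda^*\sigma_1},e^{-\lambda^*\sigma_2},\dots)\bigr]$, where $h_p$ is the complete homogeneous symmetric function, so your condition is exactly $\E[h_p]\le 1$. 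In the Yule case $f\equiv 1$ one computes $\E[h_p]=1$ for every $p$ (equality, no slack). Now take $f(1)=1/4$ and $f(j)=1$ for $j\ge 2$; this satisfies Assumptions \ref{ass:attach-func} and \ref{ass:lim-sup} with $\lambda^*=1/2$. Writing $x_j=x_1\tilde{x}_{j-1}$ with $\tilde{x}_i=\prod_{k=2}^{i+1}e^{-\lambda^* E_k}$, one gets $h_p(x)=x_1^p\sum_{m=0}^p h_m(\tilde{x})$, and a direct computation gives $\E[h_m(\tilde{x})]=m+1$, whence $\E[h_p]=\frac{(p+1)(p+2)}{2(2p+1)}\sim p/4\to\infty$. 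So $\Sigma_p\to\infty$ while $1-\hat{\rho}(\lambda^* p)\le 1$, and the induction cannot close for \emph{any} $p\ge 3$. Moreover the fallback of ``absorbing defects by taking $A$ large'' is vacuous: in every non-diagonal term $\prod_i A^{p_i}=A^p$, so $A$ cancels from both sides of the inductive inequality and enlarging it changes nothing. (Two secondary gaps: the rearrangement to $(1-\hat{\rho}(\lambda^* p))\E[W_\infty^p]=R_p$ presupposes $\E[W_\infty^p]<\infty$, which is part of what is to be proved and requires a truncation argument; and base cases $2<p<p_0$ would need finiteness established separately, since only $\bL^2$-boundedness is available from Theorem \ref{prop:convg-limit}.)

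The structural reason your scheme loses too much is visible in the example: when $f$ is small at low degrees relative to $\lambda^*$, the off-diagonal mass spreads over many children and the weights $\prod_j f(j)/(\lambda^* q_j+f(j))$ do not decay fast enough for a single-constant bound $\E[W_\infty^p]\le A^p p!$ to propagate. The paper avoids this entirely: it runs the moment induction not on $W_\infty$ but on the offspring functional $Y=\sum_i e^{-\lambda^*\sigma_i}$, and crucially on the whole \emph{shifted family} $Y_m$, proving $\E[Y_m^k]\le C_\mu^k(k\mu+m)^k$ — the dependence on the shift $m$ is exactly the extra parameter your one-parameter induction is missing — and then transfers exponential moments from $Y$ to $W_\infty$ by citing R\"osler's theorem on fixed points of the smoothing transform. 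If you want a self-contained argument for $W_\infty$ itself you would similarly need to induct over a family indexed by the root's degree (or starting index), not over $p$ alone with a universal $A$.
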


While Theorem \ref{prop:convg-limit} provides asymptotic information, the following two results quantify rates of convergence in this result and form the technical workhorse for the proof of Theorem \ref{thm:persistence}. 

\begin{theorem}\label{thm:expon-convergence}
	Under Assumptions \ref{ass:attach-func} and \ref{ass:lim-sup}, $\exists$ finite constants $C, C^\prime, \delta$ (depending on $f$) such that for any $t\geq 0$,
	\[\pr\left[\sup_{s\in [t,\infty)}\left|e^{-\lambda^*s}Z_f(s) - W_\infty\right| > e^{-\delta t}\right] \leq Ce^{-C^\prime t}. \]
\end{theorem}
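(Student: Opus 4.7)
The plan is to introduce the Doob martingale $V(s) := \E[W_\infty \mid \cF(s)]$ and split
\[
\sup_{s\ge t}|W(s) - W_\infty| \;\le\; \sup_{s\ge t}|W(s) - V(s)| \;+\; \sup_{s\ge t}|V(s) - W_\infty|,
\]
controlling each piece separately. The memoryless property of the exponential waits combined with the branching decomposition at time $s$ shows that, conditional on $\cF(s)$, each individual $x \in \BP_f(s)$ initiates an independent subtree distributed as the shifted process $\BP_f^{(d_x(s)+1)}$, where $d_x(s)$ is the current number of offspring of $x$. Setting $c_k := \E[W_\infty^{(k)}]$, passage to the Malthusian limit in this decomposition yields the explicit representations
\[
W_\infty \;=\; e^{-\lambda^* s}\sum_{x\in \BP_f(s)} W_\infty^{(x)},\qquad V(s) \;=\; e^{-\lambda^* s}\sum_{x\in \BP_f(s)} c_{d_x(s)+1},
\]
with the $W_\infty^{(x)}$ conditionally independent copies of $W_\infty^{(d_x(s)+1)}$.

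For the martingale tail, conditional independence at time $t$ yields the quadratic-variation identity $\E[(W_\infty - V(t))^2 \mid \cF(t)] = e^{-2\lambda^* t}\sum_x \var(W_\infty^{(d_x(t)+1)})$. The recursion $W_\infty^{(k)} = \sum_{i\ge 1} e^{-\lambda^* \sigma_{k,i}}W_\infty^{(i)}$ (with iid $W_\infty^{(i)} \sim W_\infty$), combined with the exponential moments from Theorem~\ref{thm:exponential-mom} and the finiteness of $\hat\rho^{(k)}(2\lambda^*)$ inherited from Assumption~\ref{ass:lim-sup}, yields the at-most-linear growth $\var(W_\infty^{(k)}) \le C(1+k)$. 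Invoking the tree identity $\sum_x d_x(t) = Z_f(t) - 1$ then gives $\sum_x \var(W_\infty^{(d_x(t)+1)}) \le CZ_f(t)$, so that $\E[(W_\infty - V(t))^2] \le Ce^{-\lambda^* t}$ since $\E[W(t)] = O(1)$. Doob's $L^2$ maximal inequality applied to the martingale $(V(t+u)-V(t))_{u\ge 0}$ followed by a Chebyshev step converts this into $\pr\!\big(\sup_{s\ge t}|V(s) - W_\infty| > e^{-\delta t}\big) \le Ce^{-C't}$ for any $\delta < \lambda^*/2$.

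For the bias piece, write $W(s) - V(s) = e^{-\lambda^* s}\sum_{x \in \BP_f(s)} \phi(d_x(s))$ with characteristic $\phi(k) := 1 - c_{k+1}$; this is a Malthusian-normalized CMJ characteristic sum in the sense of Nerman--Jagers, whose theorem gives $W(s)-V(s) \to \bar\phi\, W_\infty$ a.s. Consistency $V_\infty = W_\infty$ forces the stable average $\bar\phi = 0$, and in the illustrative special cases of uniform attachment ($c_k \equiv 1$, so the bias is identically zero) and pure preferential attachment ($c_k = k/2$, giving bias equal to $\tfrac12 e^{-\lambda^* s}$) one sees exponential decay by direct computation. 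For general $f$ I would obtain an exponential rate by invoking the fluctuation machinery for CMJ characteristic sums developed in \cite{banerjee2018fluctuation, banerjee2020persistence}: under Assumption~\ref{ass:lim-sup} the renewal measure $e^{-\lambda^* u}\mu_f(du)$ has exponential moments, producing a spectral-gap-type decay for the associated renewal operator and hence $\sup_{s\ge t}|W(s)-V(s)| \le M e^{-\eta t}$ off an event of probability at most $C''e^{-C'''t}$, for some random $M$ with finite exponential moments.

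The martingale piece is essentially a soft Doob--Chebyshev argument, powered by Theorem~\ref{thm:exponential-mom} together with the mild control $\var(W_\infty^{(k)}) = O(k)$. The principal technical obstacle is the bias: the Nerman--Jagers convergence of the characteristic sum is a priori only qualitative, and upgrading it to an explicit exponential rate in the full generality covered by the theorem requires the spectral-gap input coming from the exponential-moment regime provided by Assumption~\ref{ass:lim-sup}. This is precisely where the condition $\limsup_i f(i)/i < \lambda^*$ is essential, ensuring that the renewal operator governing the evolution of the stable degree distribution is a strict contraction on the relevant function space.
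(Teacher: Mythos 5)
Your decomposition is genuinely different from the paper's: you split against the Doob martingale $V(s)=\E[W_\infty\mid\cF(s)]$, whereas the paper splits $e^{-\lambda t}Z_f(t+u)$ against the conditional expectation $e^{-\lambda t}\E(Z_f(t+u)\mid\BP_f(t))$ of the \emph{future population} and then compares that prediction to $\sum_l \lambda_l(u)p_{l+1}W_\infty$, controlling the first gap by a conditional second-moment bound on unit intervals and the second by the rate-of-convergence estimate \cite[Lemma 9.2]{banerjee2018fluctuation}, finally taking a union bound over integer translates. Your martingale half (Doob's $L^2$ maximal inequality plus Chebyshev) is clean and, if its inputs held, would be a nice alternative to the paper's part (b). But two of your inputs are not established, and one of them is the crux of the theorem.

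First, the variance bound $\var(W_\infty^{(k)})\le C(1+k)$ is asserted, not proved, and the naive estimate fails: from $W_\infty^{(k)}=\sum_i e^{-\lambda^*\sigma_{(k),i}}W_\infty^{(i)}$ one gets $\E[(W_\infty^{(k)})^2]\le \E[W_\infty^2]\,\E[(Y^{(k)})^2]$, and the second moment of $Y^{(k)}$ is of order $k^2$ (this is exactly what the paper's Lemma 5.3 gives), which would only yield $\sum_x\var(W_\infty^{(d_x+1)})\lesssim Z_f(t)^2$ and hence a useless $O(1)$ bound on $\E[(W_\infty-V(t))^2]$. The linear bound is in fact true under the Assumptions, but it requires decomposing $\var(W_\infty^{(k)})=\E[\sum_i e^{-2\lambda^*\sigma_{(k),i}}]\var(W_\infty)+\var(Y^{(k)})\E[W_\infty]^2$ and showing \emph{cancellation} of the order-$k^2$ terms in $\var(Y^{(k)})$ via the recursion $Y_m=1+e^{-\lambda^* E_m}Y_{m+1}$; none of this is in your write-up, and Theorem 3.7 (exponential moments of $W_\infty$) does not supply it. Second, and more seriously, the bias piece $W(s)-V(s)=e^{-\lambda^*s}\sum_x(1-c_{d_x(s)+1})$ is exactly the hard quantitative content of the theorem: Nerman--Jagers gives only the qualitative statement that this mean-zero characteristic sum converges, and your upgrade to an exponential rate, \emph{uniformly over} $s\in[t,\infty)$ and off an exponentially small event, is asserted by appeal to a ``spectral-gap-type decay for the renewal operator.'' That is not what the machinery of \cite{banerjee2018fluctuation} actually does (the difficulty there is controlling the degree counts $D(l,s)$ simultaneously over all $l$, with the large-$l$ contribution handled by moment bounds on the shifted processes, and the sup over $s$ handled by discretization plus monotonicity of $Z_f$), nor is Assumption 2.3 a contraction statement -- it is used to control tails of $Y_m$ and of $c_k$. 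As written, the proof defers precisely the step the theorem is about; if you want to import \cite[Lemma 9.2]{banerjee2018fluctuation} you should state it and show how your $\sup_{s\ge t}$ bound follows from a sup over $u\in[0,1]$ plus a union bound over integers, as the paper does.
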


\begin{corollary}\label{cor:sup-bound}
	Under the hypothesis of Theorem \ref{thm:expon-convergence} there exists $C <\infty$ such that for any $A>0$,
	\[\pr\left(\sup_{s\geq 0} e^{-\lambda^* s} Z_f(s) \geq A\right) \leq C\log{A}/A^2. \]
\end{corollary}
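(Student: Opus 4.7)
The plan is to discretize time, then handle a logarithmically long initial block via a union bound built on a uniform second-moment estimate, and control the remainder via comparison with $W_\infty$ using Theorems~\ref{thm:expon-convergence} and \ref{thm:exponential-mom}. Since $s \mapsto Z_f(s)$ is non-decreasing while $s \mapsto e^{-\lambda^* s}$ is decreasing, on each interval $[n-1, n]$ we have $e^{-\lambda^* s} Z_f(s) \le e^{-\lambda^*(n-1)} Z_f(n) = e^{\lambda^*} M_n$, where $M_n := e^{-\lambda^* n} Z_f(n)$ (with $M_0 := 1$). Hence $\sup_{s \ge 0} e^{-\lambda^* s} Z_f(s) \le e^{\lambda^*} \sup_{n \ge 0} M_n$, and it suffices to bound $\pr(\sup_{n \ge 0} M_n \ge A')$ with $A' := A / e^{\lambda^*}$.

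Fix $t_A := c \log A$ where $c$ is chosen so that $C e^{-C' t_A} \le A^{-2}$, with $C, C'$ the constants from Theorem~\ref{thm:expon-convergence}. Since Theorem~\ref{prop:convg-limit} gives $M_n \to W_\infty$ in $L^2$, the sequence is $L^2$-bounded: $K := \sup_n \E[M_n^2] < \infty$. A union bound together with Markov's inequality then yields
\[
\pr\Bigl(\sup_{0 \le n \le t_A} M_n \ge A'/2\Bigr) \;\le\; (t_A + 1) \cdot \frac{4 K}{A'^2} \;\le\; \frac{C_1 \log A}{A^2}.
\]
For the tail, Theorem~\ref{thm:expon-convergence} implies that on an event of probability at least $1 - C e^{-C' t_A} \ge 1 - A^{-2}$, one has $\sup_{n > t_A} M_n \le W_\infty + e^{-\delta t_A}$. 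Hence for $A$ large enough that $e^{-\delta t_A} \le A'/4$,
\[
\pr\Bigl(\sup_{n > t_A} M_n \ge A'/2\Bigr) \;\le\; \pr\bigl(W_\infty \ge A'/4\bigr) + A^{-2},
\]
and the first term is $O(e^{-\eta A'})$ for some $\eta > 0$ by the exponential moment bound in Theorem~\ref{thm:exponential-mom}, which is $o(A^{-2})$ as $A \to \infty$.

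Combining the two estimates gives $\pr(\sup_{n \ge 0} M_n \ge A') \le C_2 \log A / A^2$ for all $A$ exceeding some threshold $A_0$; for $A \in (0, A_0]$ the conclusion is forced from the trivial bound $\pr(\cdot) \le 1$ by enlarging the constant $C$. The main technical ingredient is the uniform $L^2$ control $\sup_n \E[M_n^2] < \infty$, which is immediately supplied by the $L^2$ convergence in Theorem~\ref{prop:convg-limit}; once this is in hand the only real choice is the cutoff scale $t_A \asymp \log A$, balanced so that Theorem~\ref{thm:expon-convergence}'s tail contribution is $O(A^{-2})$ while the short-range union bound contributes the ultimate $\log A / A^2$ term. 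The principal subtlety, and what dictates the $\log A$ factor rather than the cleaner $1/A^2$ one would expect from a Doob-type inequality, is precisely that $(M_n)$ is not a martingale in the general CMJ setting, forcing this discretize-and-union-bound detour.
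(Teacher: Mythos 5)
Your proof is correct and follows essentially the same route as the paper's: both discretize via the monotonicity of $Z_f$, handle an initial block of length $\Theta(\log A)$ by a union bound using $\sup_t \E\bigl[(e^{-\lambda^* t}Z_f(t))^2\bigr]<\infty$, and control the tail via Theorem~\ref{thm:expon-convergence} together with a tail bound on $W_\infty$. The only cosmetic difference is that you invoke the exponential tail of $W_\infty$ where the paper uses Chebyshev's inequality; both suffice.
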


\begin{rem}
A natural object of interest that arises out of Theorem \ref{thm:persistence} is the random time $\Gamma := \inf\{n \in \mathbb{N} : v_{1,\Psi}(\cT_f(\ell)^\circ) = v_{1,\Psi}(\cT_f(n)^\circ) \ \forall \ \ell \ge n\},$ namely, the first time after which the identity of the centroid remains the same as the tree grows. We believe that the rate of convergence result Theorem \ref{thm:expon-convergence} will play a key role in analyzing this time as it gives a way to compare the appropriately scaled tree sizes to their (random) limits. However, a thorough analysis of $\Gamma$, including moment bounds, tail asymptotics, etc., will require a more explicit quantification of the constants $C, C'$ appearing in Theorem \ref{thm:expon-convergence}. Moreover, one will need more detailed information on the distributional properties of $W_{\infty}$ beyond the ones used in this article (finiteness of exponential moments and continuity properties). We will explore this in a subsequent article.
\end{rem}

\section{Discussion}
\label{sec:disc}

\subsection{Related work}
The general area of ``network archaeology'' \cite{navlakha2011network} has witnessed enormous growth over the past decade. In the context of the probability community, the papers closest to this paper are \cite{shah2011rumors,shah2012rumor,bubeck2017finding,jog2016analysis,jog2018persistence}. Identifying the spread of ``rumors'' in networks was considered in \cite{shah2011rumors,shah2012rumor} where a ``rumor centrality'' measure was derived which later turned out \cite{bubeck2017finding} to have close connections to Jordan centrality as well as maximum-likelihood based schemes for estimating the root. Results similar to Theorems \ref{thm:upp-root-find} and \ref{thm:low-root-find} were first obtained in \cite{bubeck2017finding} for the uniform attachment and pure preferential attachment schemes. In these special cases, the limit random variables for subtree sizes (or analogously the continuous time branching process limits) can be explicitly derived in terms of Gamma and Dirichlet distributions which allows a direct analysis of root finding algorithms based on Jordan centrality. Using a careful combinatorial analysis of the likelihood function, \cite{bubeck2017finding} were even able to derive lower bounds required for {\bf any} root finding algorithm in the context of the uniform and pure preferential attachment models (in particular showing that for the uniform model $K(\eps)$ for any root finding algorithm with error tolerance $\eps$ has to satisfy $K(\eps) \geq \exp(\sqrt{\frac{1}{30}\log \frac{1}{2\eps}})$). Further advances related to the uniform attachment scheme can be found in \cite{lugosi2019finding,devroye2018discovery}. For related questions on the influence of the initial seed on the distribution of trees grown using uniform or pure preferential attachment see \cite{bubeck-mossel,curien2014scaling,racz2020correlated}. 

The two papers most influential for this paper are \cite{jog2016analysis,jog2018persistence} which amongst various advances derived fundamental deterministic properties of the Jordan centrality measure for growing trees (see Section \ref{sec:proofs-det-jordan} for some examples). Further \cite{jog2016analysis} was the first paper (that we are aware of) that used embeddings into continuous time branching processes to analyze Jordan centrality for sublinear preferential attachment models (see Section \ref{sec:model}). For this class of models they were able to show existence of a terminal centroid (see Definition \ref{def:term-cent}) but were unable to show the existence of persistence of Jordan centrality. Part of the reason was the lack of rates of convergence results for continuous time branching processes namely Theorems \ref{thm:expon-convergence} and Corollary \ref{cor:sup-bound} in this paper. These results had to wait for the technical foundations laid in \cite{banerjee2018fluctuation} where rates of convergence results were needed in the context of change point detection problems for evolving networks. Moreover, although the existence of a $K(\eps)$ for any $\eps \in (0,1)$ was shown in \cite{jog2016analysis} in the context of sublinear preferential attachment models, no explicit algebraic form was obtained. Here we connect $K(\eps)$ to the tail behavior of the random variable $W_{\infty}$ and use its exponential tails (established in Theorem \ref{thm:exponential-mom}) to obtain explicit upper and lower bounds on $K(\eps)$ for general attachment functions $f$. These bounds, in particular, coincide for uniform and affine linear preferential attachment models, strengthening the bounds obtained in \cite{lugosi2019finding}.

\subsection{Persistence of other centrality measures}
The other major centrality measure for which questions such as persistence have been explored is degree centrality. As described in Definition \ref{def:root-find}, in this case for any budget $K$, one would output the $K$ vertices with maximal degree (the so called ``hubs'' of the network). Sufficient conditions for persistence of maximal degree vertices under convexity assumptions of the attachment function were obtained in \cite{galashin2013existence} whilst more general sufficient conditions for a related evolving network model were derived in \cite{DM}. Most closely related to this paper is \cite{banerjee2020persistence} where it was shown using martingale concentration techniques and continuous time branching processeses that for a general attachment function $f$, the condition $\sum_{i=1}^{\infty} 1/(f(i))^2 < \infty$ (coupled with additional technical conditions) was necessary and sufficient for the emergence of persistent hubs. In the regimes of existence of persistent hubs, degree centrality (which is computationally much more efficient and extends to more general networks than trees) gives an alternative estimation procedure for the root. We plan to report on this in future research.

\subsection{Related questions in probabilistic combinatorics}
The other major area where notions similar to persistence have been explored has been in the context of the evolution of the connectivity structure in random graphs as first formulated by Erd\H{o}s.  Consider the \erdos random graph process described as follows:  start with $n$ vertices with no edges. At each stage an edge is selected uniformly at random amongst all possible ${n \choose 2}$ edges and added to the system. The objects of interest now are the {\bf sizes} (i.e. the number of vertices) of connected components. Erd\H{o}s suggested viewing this process as a ``race'' between components in growing their respective sizes. Call the largest connected component at any given time as the {\bf leader}.  One of the questions raised by Erd\H{o}s was to understand asymptotics for the leader time: the time beyond which the identity of the leader does not change, thus {\bf the leader persists} for all periods of connectivity after this time. Rigorous results for asymptotics for this so called ``leader problem'' were derived in \cite{luczak1990component,addario2017probabilistic}.

\subsection{Proof outline}
The recurring theme in our proofs is an interplay between the discrete time attachment tree process $\{\cT_f(n) : n \ge 2\}$ and its continuous time embedding described in Section \ref{sec:br-def}. This connection has been exploited before in the context of asymptotics related to \emph{local weak convergence} \cite{rudas2007random}, where one looks at asymptotic properties in a local neighborhood of a uniformly chosen vertex in $\cT_f(n)$ as $n \rightarrow \infty$. There one can rely on `softer' properties like almost sure convergence of associated statistics in the CTBP to derive results for the attachment tree process. However, for analyzing centrality measures, one needs quantitative information on the \emph{rate of convergence} of these statistics to their (random) limits and \emph{tail behavior} of the limiting quantities. The key technical contributions of this article comprise obtaining rates of convergence in sup-norm for the normalized population size of the CTBP to its random limit $W_{\infty}$ (Theorem \ref{thm:expon-convergence}) and showing existence of exponential moments of the limiting random variable $W_{\infty}$ (Theorem \ref{thm:exponential-mom}). Theorem \ref{thm:expon-convergence} is shown by extending techniques of \cite{banerjee2018fluctuation}. Theorem \ref{thm:exponential-mom} is shown by devising careful inductive arguments to estimate moments of an associated random variable $Y$ defined in \eqref{eqn:y-def} and then using results on solutions of recursive distributional equations. 

For showing persistence (Theorem \ref{thm:persistence}), the work of \cite{jog2016analysis,jog2018persistence} illustrates that comparing $\Psi$ values of two vertices $u$ and $v$ reduces to \emph{comparing sizes of associated subtrees rooted at $u$ and $v$} (see Lemma \ref{lem:jord-cent-det}). Theorem \ref{thm:expon-convergence} is then used to quantitatively estimate appropriately normalized subtree sizes by their random limits. This and an application of the Borel Cantelli Lemma yield Theorem \ref{thm:persistence}. For deriving the budget sufficiency bounds, concentration arguments are used to show that the sizes of the two evolving subtrees obtained by deleting the edge between $v_1$ and $v_2$ do not become `very disproportionate' in the sense that, with high probability, one is at least a suitably large multiple $M$ of the logarithm of the other \emph{uniformly over all large times} (Lemma \ref{lem:no-too-small}). This is then used to show that if the $i$-th added vertex has to ever attain a smaller $\Psi$ value than the root, the size of a subtree attached to it has to beat the collective size of approximately $M\log i$ independent subtrees attached to other vertices at some future time. Quantifying the probability of this happening is key to obtaining the explicit estimates in Theorem \ref{thm:upp-root-find} and uses the exponential moment bound in Theorem \ref{thm:exponential-mom} in a crucial way. For the budget necessary bounds, it is shown that, for any integer $K \ge 2$, the probability of the root not having one of the $\lfloor K/4\rfloor$ smallest $\Psi$ values can be bounded below (up to a multiplicative constant) by the probability that the \emph{root is a leaf in $\cT_f(K)$} (Proposition \ref{prop:leaf-lower}). Lower bounding the latter probability leads to the bounds in Theorem \ref{thm:low-root-find}.

\section{Proofs: Branching process asymptotics}
\label{sec:proofs-bp}

For the rest of the proof, to ease notation we write $\lambda$ instead of $\lambda^*$ for the Malthusian rate of growth as in \eqref{eqn:malthus-def}. 

\subsection{Branching process preliminaries}
\label{sec:bp-prelim}
We start by collecting some simple properties of branching processes that will be useful in the sequel. We start by describing one of the most famous examples of branching processes in continuous time. 

\begin{defn}[Rate $\nu$ Yule process]
\label{def:yule-process}
    Fix $\nu > 0$. A rate $\nu$ Yule process is a pure birth process $\set{Y_\nu(t):t\geq 0}$ with $Y_\nu(0)=1$ and where the rate of birth of new individuals is proportional to size of the current population. More precisely, $\pr(Y_\nu(t+dt) - Y_\nu(t) = 1|\cF(t)):= \nu Y_\nu(t) dt + o(dt)$ and $\pr(Y_\nu(t+dt) - Y_\nu(t) \ge 2|\cF(t)):= o(dt),$ where $\set{\cF(t):t\geq 0}$ is the natural filtration of the process. 
\end{defn}
There is a close connection between Yule processes and continuous time branching process. Precisely, consider a continuous time branching process as in Definition \ref{def:ctbp} with attachment function $f(i)\equiv \nu$ for all $i$. Write $\yu_\nu(\cdot)$ for this process.  Then it is easy to check that the process describing the size of the population $\set{|\yu_\nu(t)|:t\geq 0}  \stackrel{d}{=} \set{Y_\nu(t):t\geq 0}$. For the rest of this paper, when we use the term Yule process, we will refer to this associated branching process. 

The following is a standard property of the Yule process. 

\begin{lem}[{\cite[Section 2.5]{norris-mc-book}}]
\label{lem:yule-prop}
Fix $t >0$ and rate $\nu > 0$. Then $Y_\nu(t)$ has a Geometric distribution with parameter $p=e^{-\nu t}$. Precisely, $\text{ }\pr(Y_\nu(t) = k) = e^{-\nu t}(1-e^{-\nu t})^{k-1}, k\geq 1.$ The process $\set{Y_\nu(t)\exp(-\nu t):t\geq 0}$ is an $\bL^2$ bounded martingale and thus $\exists~ W>0 $ such that  $Y_\nu(t)\exp(-\nu t)\convas W$. Further $W = \exp(1)$. 
\end{lem}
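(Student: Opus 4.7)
The plan has four natural steps corresponding to the four assertions in the lemma: first establish the one-dimensional marginal law of $Y_\nu(t)$, then derive the martingale and $L^2$-boundedness claims from the corresponding first and second moments, invoke Doob's theorem for almost-sure convergence, and finally identify the limit distribution by passing to the limit in the explicit tail formula.

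For the geometric distribution, I would work with the forward Kolmogorov equations $p_k'(t) = \nu(k-1)p_{k-1}(t) - \nu k p_k(t)$ for $k\geq 1$ (with $p_1(0)=1$), arising from the fact that from state $j$ the next jump occurs at rate $\nu j$ and goes to $j+1$. Direct substitution (or an induction on $k$) shows that $p_k(t) = e^{-\nu t}(1-e^{-\nu t})^{k-1}$ solves this system. An equivalent, arguably slicker route is via the probability generating function $G(t,z) := \E[z^{Y_\nu(t)}]$, which satisfies $\partial_t G = \nu z(z-1)\partial_z G$ with $G(0,z)=z$, and is uniquely solved by $G(t,z) = ze^{-\nu t}/(1 - z(1-e^{-\nu t}))$, the PGF of the claimed geometric law.

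For the martingale claim, I would invoke the branching property of the Yule process: conditionally on $Y_\nu(t) = j$, the population at time $t+s$ is the sum of $j$ i.i.d. copies of $Y_\nu(s)$ (one from each individual alive at time $t$), so
\[\E[Y_\nu(t+s) \mid \cF(t)] = Y_\nu(t)\,\E[Y_\nu(s)] = Y_\nu(t)\, e^{\nu s},\]
using that the geometric law above has mean $e^{\nu s}$. Hence $M_t := e^{-\nu t}Y_\nu(t)$ is a martingale. The same geometric formula gives $\E[Y_\nu(t)^2] = (2-e^{-\nu t})e^{2\nu t}$, so $\E[M_t^2] = 2 - e^{-\nu t} \leq 2$, furnishing $L^2$-boundedness. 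Doob's $L^2$ martingale convergence theorem then produces an almost-sure (and $\bL^2$) limit $W \geq 0$.

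Finally, to identify the law of $W$, I would pass to the limit in the explicit tail. For any $x>0$, writing $u := e^{-\nu t}$,
\[\pr(M_t > x) = \pr\bigl(Y_\nu(t) \geq \lceil x/u\rceil+1\bigr) = (1-u)^{\lceil x/u\rceil} \longrightarrow e^{-x} \quad \text{as } t\to\infty,\]
since $(x/u)\log(1-u) \to -x$ and the ceiling contributes $o(1)$. Almost sure convergence of $M_t$ to $W$ then forces $W \sim \exp(1)$; in particular $W > 0$ a.s. There is no real obstacle here: the only delicate step is solving the forward equation (or the PDE for $G$) to obtain the geometric marginal, and this is entirely routine.
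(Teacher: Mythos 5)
Your proposal is correct and complete; the paper does not prove this lemma at all but simply cites \cite[Section 2.5]{norris-mc-book}, and your argument (forward equations/PGF for the geometric marginal, branching property for the martingale, second moment for $\bL^2$-boundedness, Doob, and the tail limit $(1-u)^{x/u}\to e^{-x}$ to identify $W\sim\exp(1)$) is exactly the standard one found there. The only blemish is a harmless off-by-one in the event $\{M_t>x\}=\{Y_\nu(t)\geq\lceil x/u\rceil\}$ (for non-integer $x/u$), which, as you note, vanishes in the limit.
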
 

Recall that we assume that our attachment functions $f$ grow at most linearly (i.e. $f(i)\leq \overline{C}_f i$) for some constant $\overline{C}_f$. This motivates the next special case. 

\begin{defn}[$\LPA_{C,\beta}$ process]
	Fix $C> 0$ and $\beta \geq 0$. The continuous time branching processes as in Definition \ref{def:ctbp} with $f(i) = C\cdot i +\beta$ will be referred to as the Linear preferential attachment process and will be denoted by $\LPA_{C,\beta}(\cdot)$. 
\end{defn}

The following result easily follows from properties of the Exponential distribution. 

\begin{prop}
	\label{prop:bpf1-2pdom}
	\begin{enumeratea}
		\item Suppose $f_1, f_2$ are two attachment functions with $f_1(i)\leq f_2(i)$ for all $i\geq 1$. Then $\exists$ a coupling of the branching processes $\set{\BP_{f_1}(t):t\geq 0}$ and $\set{\BP_{f_2}(t):t\geq 0}$ on the same space so that $|\BP_{f_1}(t)| \leq |\BP_{f_2}(t)|$ for all $t\geq 0$. 
		\item Let the attachment function $f_*\leq f(i)\leq \overline{C}_f\cdot i+\beta$ $\forall$ $i\geq 1$.  Recall the continuous time construction in Definition \ref{def:ctbp} with two continuous time branching processes $\BP_f^{\sss(i \downarrow)}$ started from $v_1$ and $v_2$ with an edge between $v_1$ and $v_2$. We can couple $\BP^{\sss(1 \downarrow)}_f$ with a linear preferential attachment process $\LPA_{\overline{C}_f, \beta}$ and independently couple $\BP^{\sss(2 \downarrow)}_f$ with a Yule process $\yu_{f_*}$ such that 
		\[|\BP^{\sss(1 \downarrow)}_f(t)| \leq |\LPA_{\overline{C}_f, \beta}(t)|, \qquad |\BP^{\sss(2 \downarrow)}_f(t)| \geq |\yu_{f_*}(t)|, \qquad t\geq 0.  \]
	\end{enumeratea}
	
\end{prop}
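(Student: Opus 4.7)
The proof will hinge on constructing all of the branching processes that appear in the statement on a single probability space using one common family of Exp$(1)$ clocks, indexed over potential individuals in the Ulam--Harris tree.

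\textbf{Part (a).} I will use an Ulam--Harris labeling of the tree of potential individuals $\mathcal{U} = \bigcup_{n\geq 0} \mathbb{N}^n$. For each $v \in \mathcal{U}$, draw an independent sequence $\{U_j^v : j \geq 1\}$ of i.i.d.\ $\exp(1)$ random variables. For $\ell \in \{1,2\}$ and each $v$, define the would-be inter-birth times
\begin{equation*}
E_j^{v,(\ell)} := U_j^v / f_\ell(j), \qquad j \geq 1,
\end{equation*}
and partial sums $\sigma_i^{v,(\ell)} = \sum_{j=1}^i E_j^{v,(\ell)}$. By construction, for each fixed $v$ the sequence $(\sigma_i^{v,(\ell)})_{i\geq 1}$ has the law of $\xi_{f_\ell}$ from \eqref{eqn:xi-f-def}, and the sequences are independent across $v$. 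Build trees $\BP_{f_1}$ and $\BP_{f_2}$ recursively: the root $\emptyset$ is born at time $0$ in both, and if $v$ is born at time $\tau^{(\ell)}(v)$ in the $\ell$-th tree, then its $i$-th child $vi$ is born there at time $\tau^{(\ell)}(vi) = \tau^{(\ell)}(v) + \sigma_i^{v,(\ell)}$, provided the parent is alive (which it always is under Assumption \ref{ass:attach-func}(ii)).

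Because $f_1(j) \leq f_2(j)$, we have $E_j^{v,(1)} \geq E_j^{v,(2)}$ for every $v,j$, hence $\sigma_i^{v,(1)} \geq \sigma_i^{v,(2)}$. A straightforward induction on the generation of $v$ then gives $\tau^{(1)}(v) \geq \tau^{(2)}(v)$ for every $v \in \mathcal{U}$. Consequently, the set of individuals alive by time $t$ in the $f_1$-tree is contained in the corresponding set for the $f_2$-tree, which yields $|\BP_{f_1}(t)| \leq |\BP_{f_2}(t)|$ for all $t \geq 0$ on this common probability space.

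\textbf{Part (b).} Apply the construction of part (a) twice, using two independent copies of the Ulam--Harris clock family, labeled respectively for the subtree hanging off $v_1$ and the one hanging off $v_2$; this independence is built into the definition of $\widetilde{\BP}_f$ given just before Lemma \ref{lem:ctb-embedding-no-cp}. On the first (independent) family, apply part (a) with $f_1 = f$ and $f_2(i) = \overline{C}_f \cdot i + \beta$; the latter is precisely the attachment function of $\LPA_{\overline{C}_f,\beta}$, so the coupling gives $|\BP_f^{\sss(1\downarrow)}(t)| \leq |\LPA_{\overline{C}_f,\beta}(t)|$ for all $t \geq 0$. On the second family, apply part (a) with $f_1 \equiv f_*$ and $f_2 = f$; the constant attachment function $f_*$ produces the Yule process $\yu_{f_*}$ (see Definition \ref{def:yule-process} and the embedding discussed immediately after), so we obtain $|\yu_{f_*}(t)| \leq |\BP_f^{\sss(2\downarrow)}(t)|$ for all $t \geq 0$. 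Since the two clock families are independent, so are the two couplings, completing the proof.

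No step is a real obstacle here; the only point that needs a little care is verifying that $E_j^{v,(\ell)} = U_j^v/f_\ell(j)$ has the correct $\exp(f_\ell(j))$ marginal and that independence across $j$ and $v$ carries through, both of which are immediate from the scaling property of the exponential and the independence of the $U_j^v$.
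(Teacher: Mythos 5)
Your proof is correct: the shared-clock monotone coupling on the Ulam--Harris tree (with $E_j^{v,(\ell)} = U_j^v/f_\ell(j)$ and induction on generations), applied twice with independent clock families for part (b), is exactly the standard argument the paper has in mind when it states that the proposition ``easily follows from properties of the Exponential distribution'' and omits the proof. Your write-up simply makes that omitted argument explicit, and all steps check out.
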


The following is a simple example of the use of the above result and lower bounding Yule processes. 

\begin{lemma}\label{lem:zft-lbd-1}
	Assume $f$ satisfies $f_* := \inf_{i\geq 0} f(i)>0$. Let $Z_f(t) = |\BP_f(t)|$. Then for any $t\geq 0$ and $\gamma >0$, 
	\[\pr(Z_f(t) \leq e^{tf_*/2}) \leq e^{-tf_*/2}, \qquad \E([Z_f(t)]^{-\gamma})\leq  e^{-tf_*/2} + e^{-\gamma f_*t/2}. \]
\end{lemma}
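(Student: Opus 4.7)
The plan is to stochastically minorize $Z_f(t)$ by a Yule process of rate $f_*$ and exploit the closed-form geometric law given in Lemma \ref{lem:yule-prop}. Since the constant attachment function $g(i)\equiv f_*$ satisfies $g \leq f$ pointwise by the definition of $f_*$, Proposition \ref{prop:bpf1-2pdom}(a) provides a coupling under which $Z_f(t) \geq Y(t) := |\yu_{f_*}(t)|$ almost surely for every $t \geq 0$. Lemma \ref{lem:yule-prop} then tells us that $Y(t) \sim \mathrm{Geom}(e^{-f_* t})$, so a one-line application of Bernoulli's inequality $(1-p)^n \geq 1 - np$ yields the uniform tail estimate
\[
\pr(Y(t) \leq k) = 1 - (1-e^{-f_* t})^{\lfloor k\rfloor} \leq k\, e^{-f_* t}, \qquad k \geq 1.
\]

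Plugging $k = e^{f_* t/2}$ (which is $\geq 1$ for $t \geq 0$) into this bound immediately yields the first assertion, as the product $k e^{-f_* t}$ collapses to $e^{-f_* t/2}$. For the second assertion, the plan is to decompose the expectation of $[Z_f(t)]^{-\gamma} \leq [Y(t)]^{-\gamma}$ along the event $A_t := \{Y(t) \leq e^{f_* t/2}\}$ and its complement. Since $Y(t) \geq 1$ almost surely (the Yule process is pure birth starting from $1$), we have $[Y(t)]^{-\gamma} \leq 1$ on $A_t$, and the first part of the lemma bounds $\pr(A_t) \leq e^{-f_* t/2}$; on $A_t^c$ we get the deterministic bound $[Y(t)]^{-\gamma} \leq e^{-\gamma f_* t/2}$. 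Summing the two contributions gives the desired $e^{-f_* t/2} + e^{-\gamma f_* t/2}$.

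No serious obstacle is anticipated; the only mild subtlety is handling the floor $\lfloor k\rfloor$ correctly in Bernoulli's inequality, which is automatic here since $e^{f_* t/2} \geq 1$ for all $t \geq 0$. The argument is, in essence, the combination of a domination statement already established in the paper with a textbook geometric tail computation.
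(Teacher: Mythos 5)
Your proposal is correct and follows essentially the same route as the paper: dominate $Z_f(t)$ from below by a rate-$f_*$ Yule process via Proposition \ref{prop:bpf1-2pdom}(a), bound the geometric tail (your Bernoulli-inequality step is the same computation as the paper's union bound over the at most $e^{tf_*/2}$ values of $k$), and then split the expectation along the event $\{Z_f(t)\leq e^{tf_*/2}\}$ using $Z_f(t)\geq 1$. No gaps.
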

\begin{proof}
	Consider the setting of Proposition \ref{prop:bpf1-2pdom} (a) with $f_1(i) \equiv f_*$ for all $i$ and $f_2 = f$. In this case $Z_{f_1}(\cdot)$ is a rate $f_*$ Yule process. Thus Proposition \ref{prop:bpf1-2pdom} (a) coupled with Lemma \ref{lem:yule-prop} implies by the union bound, 
	\[\pr(Z_f(t) \leq e^{tf_*/2}) \leq \pr(Z_{f_1}(t) \leq e^{tf_*/2}) \leq e^{-tf_*/2}.  \]
	This proves the first assertion of the lemma. The second assertion follows from the first assertion and the bound,  $\E([Z_f(t)]^{-\gamma}) \leq \pr(Z_f(t) \leq e^{tf_*/2}) + e^{-\gamma f_*t/2}$. 
\end{proof}

Now define the probability mass function, 
\begin{equation}
\label{eqn:pk-def}
    p_k=p_k(f) := \frac{\lambda}{\lambda + f(k)} \prod_{j=1}^{k-1}\frac{f(i)}{\lambda + f(i)}, \qquad k\geq 1. 
\end{equation}
Here we use the convention $\prod_{j=1}^0 = 1$. The fact that $\{p_k : k \ge 1\}$ is indeed a pmf follows from the definition of the Malthusian rate \eqref{eqn:malthus-def}.
For any $k\geq 0$ and $t\geq 0$, let $D(k,t)$ denote the number of vertices with $k$ children (alternatively out-degree $k$) in $\BP_f(t)$ and let $D_n(k)$ denote the number of vertices with out-degree $k$ in the attachment model $\cT_f(n)$ with $n$ vertices. Note that in both cases, other than the root, the out-degree of each vertex is the same as the degree of the vertex $-1$. This explains the ``$+1$'' in the following well known result for convergence of the density of such vertices proved in \cite{jagers1984growth} for branching processes, and in \cite{rudas2007random} (using CTBP techniques from \cite{jagers1984growth}) for attachment trees. 

\begin{theorem}[\cite{jagers1984growth,rudas2007random}]\label{thm:degree-dist-convg}
Let $\set{p_k:k\geq 1}$ be the pmf as in \eqref{eqn:pk-def}. 	Under Assumption \ref{ass:attach-func}, for each fixed $l\geq 0$, $D(l,t)/|\BP_f(l,t)| \stackrel{a.s.}{\longrightarrow} p_{l+1}$ as $t\to\infty$. The same assertion holds for $D_l(n)/n$ as $n\to\infty$. 
\end{theorem}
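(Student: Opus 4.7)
Both assertions I would derive as direct consequences of Nerman's strong convergence theorem for super-critical CMJ processes counted with random characteristics \cite{nerman1981convergence,jagers1984growth}, combined with the embedding in Lemma \ref{lem:ctb-embedding-no-cp}. For each individual $x$ of $\BP_f(\cdot)$ with offspring process $\xi_f^x$, define the random characteristic $\phi_l^x(a) := \ind\{\xi_f^x(a) = l\}$, so that $D(l,t) = \sum_x \phi_l^x(t-\sigma_x)$ is precisely the branching process counted with $\phi_l$. Under Assumption \ref{ass:attach-func} the Malthusian rate $\lambda$ exists with $\hat\rho$ finite on an open interval containing $\lambda$, the intensity measure of $\xi_f$ is absolutely continuous (since each $\sigma_k$ has a density), and $\phi_l \le 1$ is a.s.\ bounded, so all integrability and non-lattice hypotheses of Nerman's theorem are met.

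Applying that theorem to $\phi_l$ and separately to $\phi\equiv 1$ (which recovers $|\BP_f(\cdot)|$), and noting that the two limits share the common random factor $W_\infty/\beta$ (with $\beta := -\hat\mu_f'(\lambda) > 0$) coming from the Malthusian martingale, yields
\[
\frac{D(l,t)}{|\BP_f(t)|} \stackrel{a.s.}{\longrightarrow} \lambda\,\hat\phi_l(\lambda), \qquad \hat\phi_l(\lambda) := \int_0^\infty e^{-\lambda a}\,\pr(\xi_f(a)=l)\,da,
\]
on the full-probability event $\{W_\infty > 0\}$ supplied by Theorem \ref{prop:convg-limit}. Matching this limit with $p_{l+1}$ is then a short Laplace-transform computation: writing $\pr(\xi_f(a)=l) = \pr(\sigma_l \le a < \sigma_{l+1})$, applying Fubini, and using $\E[e^{-\lambda\sigma_k}] = \prod_{i=1}^k f(i)/(\lambda+f(i))$ (valid since $\sigma_k$ is a sum of independent exponentials), one telescopes
\[
\lambda\,\hat\phi_l(\lambda) \;=\; \E[e^{-\lambda\sigma_l}] - \E[e^{-\lambda\sigma_{l+1}}] \;=\; \prod_{i=1}^{l}\frac{f(i)}{\lambda+f(i)}\cdot\frac{\lambda}{\lambda+f(l+1)} \;=\; p_{l+1}.
\]

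For the discrete-time statement I would invoke Lemma \ref{lem:ctb-embedding-no-cp}: since the stopping times $\eta_n^{\sss(1,2)} \to \infty$ a.s., and a.s.\ convergence in continuous time transfers automatically to a.s.\ convergence along any random time sequence going to infinity, applying the ratio limit to each of the two independent CTBPs started at $v_1,v_2$ and summing numerators and denominators gives $D_l(n)/n \to p_{l+1}$ a.s.; the bounded correction needed to pass from out-degree in the CTBP to degree in the rooted labelled tree affects only $v_1$ and $v_2$ and contributes $O(1)$ vertices, which is negligible against $n$. The main (and essentially only) technical content is verifying the hypotheses of Nerman's theorem, in particular the $x\log x$-type integrability of the reproduction intensity at $\lambda$; this follows from $\hat\rho$ being finite on an open neighborhood of $\lambda$ under Assumption \ref{ass:attach-func}(iii), after which every remaining step is routine.
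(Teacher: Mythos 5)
Your proposal is correct and is essentially the argument of the sources the paper cites for this result (the paper itself gives no proof of Theorem \ref{thm:degree-dist-convg}, deferring to \cite{jagers1984growth,rudas2007random}, where it is established exactly via Nerman's theorem for CTBPs counted with the characteristic $\ind\{\xi_f(a)=l\}$ and the Laplace-transform identification of the limit with $p_{l+1}$). The verification of Nerman's hypotheses via absolute continuity of $\mu_f$ and finiteness of $\hat\rho$ below $\lambda^*$ (Assumption \ref{ass:attach-func}(iii)), and the transfer to discrete time along the stopping times $\eta_n^{\sss(1,2)}$, are all as in those references.
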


\subsection{Proof of Theorem \ref{prop:convg-limit} and Theorem \ref{thm:exponential-mom}: }
\label{sec:proof-exp-mom}

Recall the arrival times $\set{\sigma_i: i\geq 1}$ for the point process $\xi_f$ in \eqref{eqn:xi-f-def}. Define
\begin{equation}
	\label{eqn:y-def}
	Y = \sum_{i=1}^\infty e^{-\lambda \sigma_i}, \qquad Y_m = \sum_{i=m}^{\infty} e^{-\lambda (\sigma_i -\sigma_m)}, \qquad m\geq 0. 
\end{equation}
Note that $Y = Y_0 -1$. Observe that the limit random variable $W_\infty$ of the branching process appearing in Theorem \ref{prop:convg-limit} satisfies the following recursive distributional equation \cite[Equation (3)]{biggins1979continuity} (see also the paragraph following Equation (5) of \cite{biggins1979continuity}):
\begin{equation}
\label{eqn:rde-winf}
	W_\infty \stackrel{d}{=} \sum_{i=1}^{\infty} e^{-\lambda \sigma_i} W_\infty^{\sss(i)},
\end{equation}
where $\set{W_\infty^{\sss(i)}: i\geq 1}$ are a sequence of \emph{i.i.d.} random variables with the same distribution as $W_\infty$ and independent of the point process $\set{\sigma_i:i\geq 1}$. Random variables satisfying such recursive distributional equations have been analyzed extensively (see e.g. \cite{durrett1983fixed,liu2000generalized,liu2001asymptotic,biggins1997seneta,iksanov2004elementary}). In particular, by \cite[Theorem 6]{rosler1992fixed} (or \cite[Theorem 3.4]{alsmeyer2017thin}), finiteness of the moment generating function of $Y$ in a neighborhood of zero guarantees the corresponding assertion for $W_\infty$. The former is achieved in the following Proposition. Note that the law of the random variable $Y$ depends in a non-trivial way on the attachment function $f$ which, in turn, can be very general. The key technical contribution in the proof of the following Proposition is showing that Assumption \ref{ass:lim-sup} on the attachment function is all that is needed to guarantee existence of exponential moments of $Y$. This level of generality is handled by resorting to a careful inductive argument to estimate the moments of $Y$.

\begin{prop}\label{prop:exp-mom-y}
	Under Assumption \ref{ass:lim-sup}, the random variable $Y$ satisfies $\E(e^{tY}) < \infty$ for some $t>0$. 
\end{prop}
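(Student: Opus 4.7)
The plan is to stochastically dominate $Y$ by a tractable functional of a rate-$A$ Yule process (with $A < \lambda^*$), and then bound the moments of that functional via a H\"older-type estimate. First, by Assumption~\ref{ass:lim-sup} I would choose $A \in (\limsup_{i\to\infty} f(i)/i,\ \lambda^*)$ and $N \in \bN$ such that $f(i) \le A i$ for every $i \ge N$. Set $\tilde f(i) := \max(f(i), A i)$, so $\tilde f \ge f$ pointwise, and couple $\xi_f$ with $\xi_{\tilde f}$ via inverse CDFs applied to a common sequence of uniform random variables; this yields inter-arrival times $\tilde E_i \le E_i$, hence $\tilde\sigma_i \le \sigma_i$ almost surely, and consequently
\[Y \le \tilde Y := \sum_{i \ge 1} e^{-\lambda^* \tilde\sigma_i}\qquad \text{a.s.}\]
Since the first $N-1$ summands each contribute at most one and $e^{-\lambda^*\tilde\sigma_{N-1}} \le 1$, one further obtains $\tilde Y \le (N-1) + V$ with $V := \sum_{i \ge N} e^{-\lambda^*(\tilde\sigma_i - \tilde\sigma_{N-1})}$.

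Next, because $\tilde f(i) = A i$ for $i \ge N$, the increments $(\tilde\sigma_i - \tilde\sigma_{N-1})_{i \ge N}$ are precisely the jump times of a rate-$A$ Yule process $Y_A^{(N)}$ started from $N$ individuals. The branching property decomposes $Y_A^{(N)}(s) = \sum_{i=1}^N Y_A^{(i)}(s)$, with $\{Y_A^{(i)}\}$ iid rate-$A$ Yule processes starting from one individual. Converting the sum over jumps into an integral against the counting process and re-indexing gives
\[V = \lambda^* \int_0^\infty e^{-\lambda^* s}\bigl(Y_A^{(N)}(s) - N\bigr)\, ds \;\equald\; \sum_{i=1}^{N} J_i,\]
where $J_i := \lambda^* \cI_i - 1$ are iid and $\cI_i := \int_0^\infty e^{-\lambda^* s} Y_A^{(i)}(s)\, ds$. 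Thus it suffices to show that $\cI := \cI_1$ has a finite moment generating function in a neighborhood of the origin.

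For this, I would expand
\[\E(\cI^k) = \int_{[0,\infty)^k} \E\!\left(\prod_{j=1}^k Y_A(s_j)\right) e^{-\lambda^* \sum_j s_j}\, d^k s,\]
apply H\"older's inequality with $k$ equal exponents to obtain $\E\bigl(\prod_j Y_A(s_j)\bigr) \le \prod_j \E(Y_A(s_j)^k)^{1/k}$, and use that $Y_A(s)$ is Geometric with parameter $e^{-As}$ together with a direct comparison to a shifted Exponential to derive $\E(Y_A(s)^k) \le e \cdot k!\, e^{kAs}$. The integrand factors and yields
\[\E(\cI^k) \le \frac{e\cdot k!}{(\lambda^* - A)^k},\]
so $\E(e^{t\cI}) < \infty$ for $|t| < \lambda^* - A$. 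Chaining back: $\E(e^{tJ}) < \infty$ for $t < 1 - A/\lambda^*$, $\E(e^{tV}) = \E(e^{tJ})^N$ is finite on the same interval, and $\E(e^{tY}) \le e^{t(N-1)}\E(e^{tV}) < \infty$ for the desired $t$.

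The main obstacle is calibrating the H\"older step so that it produces the factor $k!$ rather than something like $k^k$; this is exactly what turns the resulting series into the geometric $\sum_k (t/(\lambda^*-A))^k$, and thus makes the moment generating function finite near the origin. Assumption~\ref{ass:lim-sup} is indispensable here: it is what permits choosing $A$ strictly below $\lambda^*$, ensuring the Laplace factor $(\lambda^*-A)^{-k}$ grows only exponentially in $k$. Without the strict inequality $\limsup_i f(i)/i < \lambda^*$, the reduction to a Yule functional would still go through, but the integral defining $\cI$ would have divergent moments and the entire scheme would collapse.
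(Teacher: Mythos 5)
Your proof is correct, and it takes a genuinely different route from the paper's. The paper works directly with the general point process: it sets up the recursion $\E(Y_m^k) \leq k \sum_{i\ge m}\E(Y_i^{k-1})\prod_{j=m}^{i-1}f(j)/(k\lambda+f(j))$ for the shifted sums $Y_m$, proves $\E(Y_m^k)\le C_\mu^k(k\mu+m)^k$ by induction on $k$ for $m$ beyond a threshold $m_0$ where $f(l)\le(\lambda-\delta)l$, and then runs a second recursion to absorb the finitely many small indices, arriving at $\Lambda_k\le C_1C_2^k(k+1)!$. You instead dominate $f$ by an exactly linear function $Ai$ beyond the threshold, identify the tail of the dominating point process with the jump times of a rate-$A$ Yule process started from $N$ individuals, convert the sum $\sum_k e^{-\lambda^*\tau_k}$ into $\lambda^*\int_0^\infty e^{-\lambda^*s}(Y_A^{\sss(N)}(s)-N)\,ds$ by integration by parts (the boundary term vanishes a.s.\ precisely because $A<\lambda^*$), split by the branching property into $N$ i.i.d.\ summands, and bound moments of each via generalized H\"older together with the explicit geometric law of $Y_A(s)$ (your bound $\E(Y_A(s)^k)\le e\,k!\,e^{kAs}$ checks out via $Y_A(s)\stod 1+E/p$ with $p=e^{-As}$). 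Both arguments extract the same factorial moment growth and both lean on Assumption~\ref{ass:lim-sup} in exactly the same way --- to create room between the effective linear growth rate and $\lambda^*$. What your approach buys is the elimination of the double induction in favor of closed-form Yule computations, plus a more transparent radius of finiteness, $t<1-A/\lambda^*$; what the paper's approach buys is self-containedness (no appeal to the integral representation or the branching decomposition of the Yule process) and bounds on all the shifted variables $Y_m$ simultaneously, which it reuses elsewhere. Two small points worth making explicit if you write this up: state that the inverse-CDF coupling gives $\tilde\sigma_i\le\sigma_i$ pathwise (hence $Y\le\tilde Y$ a.s., not merely in distribution), and justify the vanishing of the boundary term $e^{-\lambda^*T}(Y_A^{\sss(N)}(T)-N)\to 0$ via the a.s.\ convergence of $e^{-AT}Y_A^{\sss(N)}(T)$ from Lemma~\ref{lem:yule-prop}.
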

\begin{proof}
For $m\geq 0$ and $j\geq m$, write $\sigma_j^{\sss(m)} = \sigma_j - \sigma_m = \sum_{i=m}^{j-1} E_i$. Now for any $k\geq 1$,
\begin{align*}
	\E(Y_m^k) & = \E\bigg(\sum_{m\leq i_i, \ldots, i_k}^\infty e^{-\lambda (\sigma_{i_1}^{\sss(m)} + \cdots +\sigma_{i_k}^{\sss(m)})}\bigg)\\
	&\leq \E\bigg(k \sum_{i=m}^{\infty} \sum_{i\leq j_1, \ldots, j_{k-1}}^{\infty} e^{-\lambda(\sigma_i^{\sss(m)} + \sigma_{j_1}^{\sss(m)} + \cdots + \sigma_{j_{k-1}}^{\sss(m)})}\bigg) \\
	&= k \E\bigg(\sum_{i=m}^{\infty} e^{-\lambda k \sigma_i^{\sss(m)}}\cdot \sum_{i\leq j_1, \ldots, j_{k-1}}^{\infty} e^{-\lambda[(\sigma_{j_1} - \sigma_i)+\cdots +(\sigma_{j_{k-1}} - \sigma_i) ]}\bigg).
\end{align*}
Note that for any $i\geq m$, $\tilde{Y}_i := \sum_{i\leq j_1, \ldots, j_{k-1}}^{\infty} e^{-\lambda[(\sigma_{j_1} - \sigma_i)+\cdots +(\sigma_{j_{k-1}} - \sigma_i) ]} $ is independent of $\sigma_i^{\sss(m)}$ with the same distribution as $Y_i^{k-1}$. Using this and the explicit form of $\sigma_i^{\sss(m)}$ as a sum of independent exponential random variables gives,
\begin{equation}
\label{eqn:ymk-bd}
	\E(Y_m^k) \leq k \sum_{i=m}^{\infty}\E(Y_i^{k-1})\cdot\bigg[ \prod_{j=m}^{i-1}\left(\frac{f(j)}{k\lambda+ f(j)}\right)\bigg].
\end{equation}
Here we use the convention $1 = \prod_{j=m}^{m-1}$. We will use \eqref{eqn:ymk-bd} to obtain moment bounds. By Assumption \ref{ass:lim-sup} there exist $m_0\geq 2$ and $\delta > 0$ such that the attachment function satisfies $f(l) \leq (\lambda -\delta)l $ for all $l\geq m_0$. Define 
\begin{equation}
\label{eqn:mu-cmu-def}
	\mu:= \frac{\lambda}{\lambda -\delta}, \qquad C_\mu:= \frac{e^{(\mu-1)/\mu}}{\mu-1}. 
\end{equation}
The following lemma derives moment bounds for $m\geq m_0$.
\begin{lemma}\label{lem:mom-bd-mzero}
	For any $m\geq m_0$ and $k\geq 1$,
	\begin{equation}
	\label{eqn:mzero-bd}
		\E(Y_m^k) \leq C_\mu^k(k\mu +m)^k. 
	\end{equation}
\end{lemma}

\begin{proof}
\label{pf:lem-mom-bd-mzero}
We will prove the above result via induction on $k$. Precisely, for $k \in \mathbb{N}$,

\begin{quote}
	{\bf Induction hypothesis:} \eqref{eqn:mzero-bd} holds for the given choice of $k$ and all $m\geq m_0$. 
\end{quote} 
First observe that for any fixed $k$,  for $i>m\geq m_0$, 
\begin{align*}
	\log \prod_{j=m}^{i-1} \frac{f(j)}{k\lambda + f(j)} \leq \log \prod_{j=m}^{i-1} \frac{(\lambda - \delta)j}{k\lambda +(\lambda -\delta)j} = \sum_{j=m}^{i-1}\log\left(1-\frac{k\mu}{k\mu+j}\right)\leq -\sum_{j=m}^{i-1} \frac{k\mu}{k\mu+j}.
\end{align*}
Bounding the final term by $-\int_{k\mu+m}^{k\mu+i} (k\mu/z) dz$ shows that for $i\geq m\geq m_0$ and $k\geq 1$, 
\begin{equation}
\label{eqn:prod-bdkmu}
	\prod_{j=m}^{i-1}\frac{f(j)}{k\lambda +f(j)} \leq \left(\frac{k\mu+m}{k\mu+i}\right)^{k\mu}. 
\end{equation}
Further for any $k\geq 1$,
\begin{equation}
\label{eqn:kmu-sum}
	\sum_{i=m}^{\infty} \frac{1}{(k\mu+i)^{k(\mu-1)+1} }\leq \int_{k\mu+m -1}^{\infty} \frac{1}{z^{k(\mu-1)+1}}dz \leq  \frac{C_{\mu}}{k}\frac{1}{(k\mu+m)^{k(\mu-1)}},
\end{equation}
where the last inequality follows upon checking the elementary inequality,
\[\log\left(\frac{k\mu+m}{k\mu+m-1}\right)^{k(\mu-1)} \leq \frac{\mu-1}{\mu}.\]
\noindent {\bf Induction base case ($(k=1)$): } For any $m\geq m_0$,
\begin{equation}
\label{eqn:induc-base}
	\E(Y_m) = \sum_{i=m}^{\infty} \prod_{j=m}^{i-1}\frac{f(j)}{\lambda+f(j)}\leq \sum_{i=m}^{\infty}\left(\frac{\mu+m}{\mu+i}\right)^{\mu}\leq C_\mu(\mu+m),
\end{equation}
where the first bound follows from \eqref{eqn:prod-bdkmu} and the second from \eqref{eqn:kmu-sum}. This verifies the induction hypothesis for $k=1$. 

\noindent {\bf General case:} Now suppose the induction hypothesis is true for $k-1$. By \eqref{eqn:ymk-bd}, \eqref{eqn:prod-bdkmu} and the assumed bound on $\E(Y_m^{(k-1)})$ we get, 
\begin{align}
\label{eqn:induct-general}
	\E(Y_m^k) &\leq k\sum_{i=m}^{\infty} \left(\frac{k\mu+m}{k\mu+i}\right)^{k\mu} C_\mu^{k-1}((k-1)\mu+i)^{k-1}\notag\\
	 &\leq k (k\mu+m)^{k\mu} C_{\mu}^{k-1}\sum_{i=m}^{\infty} \frac{1}{(k\mu+i)^{k(\mu-1)+1}}. 
\end{align}
Now using \eqref{eqn:kmu-sum} on the last sum shows that $\E(Y_m^k)\leq C_\mu^k(k\mu +m)^k$ verifying the induction hypothesis for $k$ and completing the proof of the Lemma. 
\end{proof}

We will now leverage the above bounds to complete the proof. Define,
\begin{equation}
\label{eqn:thet-Lamb-def}
	\theta:=\sup_{1\leq j\leq m_0} \frac{f(j)}{\lambda+f(j)} < 1, \qquad \Lambda_k = \sup_{1\leq i\leq m_0} \E(Y_i^k), \qquad k\geq 1. 
\end{equation}
\begin{lemma}\label{lem:Lamb-bds}
	There exist constants $C_1, C_2 <\infty$ such that for any $k\geq 1$, $\Lambda_k \leq C_1 C_2^k (k+1)!$. 
\end{lemma}

\noindent {\bf Proof of Proposition \ref{prop:exp-mom-y}:} Assuming Lemma \ref{lem:Lamb-bds}, note that this immediately implies that $\forall t\in (0,C_2^{-1})$, $\E(e^{tY_0}) < \infty$. Since $Y_0 = Y+1$, this completes the proof of the proposition. \qed
\\

\noindent {\bf Proof of Lemma \ref{lem:Lamb-bds}:} For $k\geq 1$, using \eqref{eqn:ymk-bd} for $m < m_0$ we get $\E(Y_m^k) \leq S_1(k) + S_2(k)$ where 
\begin{equation}
\label{eqn:s1-def}
	S_1(k) := k\sum_{i=m}^{m_0}\E(Y_i^{k-1})\cdot\bigg[ \prod_{j=m}^{i-1}\left(\frac{f(j)}{k\lambda+ f(j)}\right)\bigg] \leq k \Lambda_{k-1} \sum_{i=m}^{m_0} \theta^{i-m} \leq k D_\theta \Lambda_{k-1},
\end{equation}
where for the rest of this proof,  $D_\theta = \sum_{j=0}^{\infty} \theta^j = 1/(1-\theta)$. Similarly with $S_2(k):=k\sum_{i=m_0+1}^\infty \E(Y_i^{k-1})\cdot\bigg[ \prod_{j=m}^{i-1}\left(\frac{f(j)}{k\lambda+ f(j)}\right)\bigg] $ we have, 
\begin{equation}
\label{eqn:s2-def}
	S_2(k)\leq \theta^{m_0 - m} \cdot k\sum_{i=m_0+1}^\infty \E(Y_i^{k-1})\cdot\bigg[ \prod_{j=m_0}^{i-1}\left(\frac{f(j)}{k\lambda+ f(j)}\right)\bigg] \leq 1\cdot C_\mu^k(k\mu+m_0)^k,
\end{equation} 
where to obtain the final inequality, we have used $\theta <1$ and the same calculations as in \eqref{eqn:induct-general}. Combining \eqref{eqn:s1-def} and \eqref{eqn:s2-def} gives, 
\begin{equation}
\label{eqn:Lamb-recur}
	\Lambda_k\leq k D_\theta \Lambda_{k-1} + C_{\mu}^k(k\mu+m_0)^k, \qquad k\geq 1. 
\end{equation}
Using this bound recursively gives,
\begin{equation}
\label{eqn:Lambda-k-bound}
	\Lambda_k \leq D_\theta^{k-1}k!\Lambda_1 +\sum_{j=0}^{k-1} D_\theta^j C_{\mu}^{k-j}((k-j)\mu + m_0)^{k-j}\frac{k!}{(k-j)!}. 
\end{equation}
Note that for any $j\leq k-1$, $((k-j)\mu+m_0)^{k-j} \leq (\max(\mu, m_0)^{k-j})(k-j+1)^{k-j}$. Further by Stirling's bounds for factorials, 
\begin{equation}
\label{eqn:kfac-bd}
	(k-j+1)^{k-j}\leq \frac{e}{\sqrt{2\pi}} (k-j)! e^{(k-j)}. 
\end{equation}
Let $\tilde{D} = \max\set{D_\theta, eC_\mu\max\set{\mu, m_0}}$. Using \eqref{eqn:kfac-bd} we get
\[C_\mu^k(k\mu + m_0)^k \leq \frac{e}{\sqrt{2\pi}} k! \tilde{D}^k,\]
while for $1\leq j\leq k-1$, 
\[D_\theta^j C_{\mu}^{k-j}((k-j)\mu + m_0)^{k-j}\frac{k!}{(k-j)!} \leq \frac{e}{\sqrt{2\pi}}\tilde{D}^k k!.\]
Using these bounds in \eqref{eqn:Lambda-k-bound} shows that the assertion of Lemma \ref{lem:Lamb-bds} holds with $C_1 = (\Lambda_1 + e/\sqrt{2\pi})$ and $C_2 = \tilde{D}$. 
\end{proof}

{\bf Proof of Theorem \ref{prop:convg-limit}:} Note that Proposition \ref{prop:exp-mom-y} implies $\E(Y^2) <\infty$. Combining this with Assumption \ref{ass:attach-func}, the positivity of $W_{\infty}$ and the $\bL^2$ convergence assertion in \eqref{eqn:bp-convg-winf} follow from \cite[Corollary 4.2 and Theorem 4.3]{jagers1984growth} while the a.s. convergence follows from \cite[Theorem 5.4]{nerman1981convergence}. The assertion on the continuity and support of the distribution now follows from \cite[Theorem 1 and Theorem 2]{biggins1979continuity}. \qed

{\bf Proof of Theorem \ref{thm:exponential-mom}:} Recall that the limit random variable $W_\infty$ of the branching process satisfies the recursive distributional equation \eqref{eqn:rde-winf}. The finiteness of the moment generating function of $W_\infty$ claimed in Theorem \ref{thm:exponential-mom} now follows from \cite[Theorem 6]{rosler1992fixed} (or \cite[Theorem 3.4]{alsmeyer2017thin}) upon using Proposition \ref{prop:exp-mom-y}, which establishes the finiteness of the moment generating function of $Y$ in a neighborhood of the origin. \qed

\subsection{Proof of Theorem \ref{thm:expon-convergence} and Corollary \ref{cor:sup-bound}: }

\subsubsection{Proof of Theorem \ref{thm:expon-convergence}:} 
Recall from Section \ref{sec:bp-prelim} that for fixed $l\geq 0$,  $D(l,t)$ denotes the number of individuals with $l$ children in $\BP_f(t)$. Let $\cD(l,t)$ denote the corresponding set of vertices. Recall the asymptotics for these objects encapsulated in Theorem \ref{thm:degree-dist-convg}. We will use $\set{\BP_f(t):t\geq 0}$ to also denote the natural filtration of the process. Note that conditional on $\BP_f(t)$, for a vertex $v\in \cD(l,t)$, the point process describing the new offspring produced after time $t$ has distribution $\xi_f^{\sss(l)}$ as in \eqref{eqn:xi-k-f-t-def} (with $\mu_f^{\sss(l)}$ the corresponding mean measure) whilst all ensuing new children reproduce according to the original point process $\xi_f$ (all of these point processes conditionally independent given $\BP_f(t)$). Thus, recalling $Z_f(t):=|\BP_f(t)|$ and $m_f(t) = \E(Z_f(t))$, it is easy to see that for any $u\geq 0$,
\begin{equation}
\label{eqn:543}
	\E(Z_f(t+u)|\BP_f(t)) = \sum_{l=0}^{\infty} D(l,t)\left(1+\int_0^u m_f(u-s)\mu^{\sss(l)}(ds)\right):= \sum_{l=0}^{\infty} D(l,t) \lambda_l(u).
\end{equation}
The analysis hinges on the following technical tools derived in \cite{banerjee2018fluctuation}.
\begin{lemma}\label{lem:cp-techn-tools}
	Consider a branching process $\BP_f(\cdot)$ with attachment function $f$. Then we have the following:
	\begin{enumeratea}
		\item \label{it:techn-a} If $f$ satisfies Assumptions \ref{ass:attach-func} and \ref{ass:lim-sup}, then by \cite[Lemma 9.2]{banerjee2018fluctuation}, with $\set{p_l:\geq 0}$ as in \eqref{eqn:pk-def}, $\set{\lambda_l(\cdot):l\geq 0}$ as in \eqref{eqn:543} and $W_\infty$ as in Theorem \ref{prop:convg-limit}, there exist constants  $\eps, C_1, C_2$ such that for all $t \ge 0$, 
		\[\E\left(\sup_{u\in [0,\eps t]}\left|e^{-\lambda t} \E(Z_f(t+u)|\BP_f(t)) - \sum_{l=0}^\infty \lambda_l(u) p_{l+1} W_\infty\right|\right)\leq C_1e^{-C_2 t}.\]
		\item \label{it:techn-b} If $f$ satisfies Assumption \ref{ass:attach-func}, then by \cite[Lemma 7.11]{banerjee2018fluctuation}, there exist constants $C_3, \gamma >0$ and $0<\omega<1$ such that for all $t \ge 0$, 
		\begin{multline*}
		\pr\left[\left.\sup_{u\in [0,1]}\left|e^{-\lambda t} Z_f(t+u) - e^{-\lambda t} \E(Z_f(t+u)|\BP_f(t))\right| > e^{-\lambda t}(Z_f(t))^{\omega}\right|\BP_f(t)\right]\\
		 \leq \frac{C_3}{(Z_f(t))^\gamma}.
		\end{multline*}
	\end{enumeratea} 
\end{lemma}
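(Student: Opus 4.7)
Since both parts of this lemma are stated as being derived in \cite[Lemma 9.2 and Lemma 7.11]{banerjee2018fluctuation}, the proof here would amount to a direct citation with verification that our hypotheses imply those used in the cited reference. Nevertheless, it is worth sketching the structural ideas that underlie such a derivation, as these inform how the lemma is used in the sequel.

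For part (a), the starting point is the exact identity \eqref{eqn:543}, $\E(Z_f(t+u) \mid \BP_f(t)) = \sum_{l\ge 0} D(l,t)\lambda_l(u)$. After multiplying by $e^{-\lambda t}$, the candidate limit arises by combining $e^{-\lambda t} Z_f(t) \to W_\infty$ from Theorem \ref{prop:convg-limit} with $D(l,t)/Z_f(t) \to p_{l+1}$ from Theorem \ref{thm:degree-dist-convg}. To promote these almost sure statements into an $L^1$ bound with exponential decay $C_1 e^{-C_2 t}$, the natural strategy is to couple $\BP_f(\cdot)$ with Nerman-type characteristic-counted martingales; under Assumption \ref{ass:lim-sup} the associated $L^2$ fluctuations decay geometrically, and the exponential moments of $W_\infty$ from Theorem \ref{thm:exponential-mom} are exactly what make the variance terms summable. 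The supremum over $u \in [0, \eps t]$ is then absorbed by the pointwise estimate $\lambda_l(u) \le C e^{\lambda u}$ together with the prefactor $e^{-\lambda t}$, yielding a net exponential decay $e^{-\lambda(1-\eps)t}$ provided $\eps$ is chosen sufficiently small.

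For part (b), conditional on $\BP_f(t)$ one decomposes $Z_f(t+u) - \E(Z_f(t+u) \mid \BP_f(t))$ as a sum over $v \in \BP_f(t)$ of \emph{independent} mean-zero increments, each capturing the subtree dynamics above $v$ in the window $(t, t+u]$. Each increment has conditional second moment uniformly bounded over $u \in [0,1]$ (here the linear-growth bound on $f$ from Assumption \ref{ass:attach-func}(ii) is crucial), so a Freedman/Bernstein-type inequality applied conditionally on $\BP_f(t)$ produces deviations on the scale $\sqrt{Z_f(t)}$, which is absorbed into $(Z_f(t))^{\omega}$ for any $\omega \in (1/2, 1)$ with a polynomial residual factor of $(Z_f(t))^{-\gamma}$ in the tail bound. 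The supremum over $u \in [0,1]$ is then handled by a grid/chaining argument: monotonicity of $s \mapsto Z_f(s)$ in $s$ reduces the supremum to a union bound over a mesh of controlled size, contributing only a polynomial overhead that is again dominated by $(Z_f(t))^{\gamma}$ for an appropriate positive $\gamma$.

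The hardest step in a fully self-contained derivation is the quantitative rate in part (a): classical CMJ theory (e.g.\ \cite{nerman1981convergence,jagers1984growth}) yields the almost sure limit but not the $L^1$ exponential rate needed here. Securing this rate requires both the finiteness of exponential moments of $W_\infty$ and detailed spectral/renewal estimates for the mean measure $m_f(t) = \E(Z_f(t))$, both of which are carried out in \cite{banerjee2018fluctuation}. Accordingly, the cleanest path in the present paper is to invoke those results after checking that Assumptions \ref{ass:attach-func} and \ref{ass:lim-sup} imply the hypotheses imposed in that reference.
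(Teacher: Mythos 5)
Your overall approach---treat the lemma as a citation and verify that the present hypotheses cover those of \cite[Lemmas 9.2 and 7.11]{banerjee2018fluctuation}---is exactly what the paper does; it offers no independent proof, only a remark explaining why the cited results apply. One point in your plan does not go through as literally stated, though: for part (a) you propose to ``check that Assumptions \ref{ass:attach-func} and \ref{ass:lim-sup} imply the hypotheses imposed in that reference,'' but they do not, since \cite[Lemma 9.2]{banerjee2018fluctuation} is proved under the strictly stronger condition that $\lim_{i\to\infty}f(i)/i$ exists. The correct (and the paper's) resolution is to inspect the proof in the reference and observe that this stronger condition enters only through (i) Assumption \ref{ass:lim-sup} itself and (ii) the finiteness of $\E(Y^2)$ for $Y=\sum_i e^{-\lambda\sigma_i}$, the latter being supplied here by Proposition \ref{prop:exp-mom-y}. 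Relatedly, your sketch attributes the summability of the variance terms in part (a) to the exponential moments of $W_\infty$ (Theorem \ref{thm:exponential-mom}); the ingredient actually used is the second moment of $Y$, and invoking Theorem \ref{thm:exponential-mom} here would be mildly circular in spirit since that theorem is itself deduced from Proposition \ref{prop:exp-mom-y}. The heuristic sketches of the two parts are otherwise reasonable but are not load-bearing, since the quantitative work lives entirely in the cited reference.
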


\begin{rem}
	In \cite{banerjee2018fluctuation}, part (a) above was shown to hold under the stronger condition that $\lim_{i\to\infty} f(i)/i\to C$. However, it was shown that this stronger condition implied the Assumption \ref{ass:lim-sup} which was then used in the proof without further reference to existence of the limit. Moreover, the finiteness of the second moment of $Y = \sum_{i=1}^\infty e^{-\lambda \sigma_i}$ needed for the proof of part (a) in \cite{banerjee2018fluctuation} is a direct consequence of Proposition \ref{prop:exp-mom-y}.
\end{rem}

Now write $A(t,u):=|e^{-\lambda t} Z_f(t+u) - e^{-\lambda t} \E(Z_f(t+u)|\BP_f(t))| $ for the object in Lemma \ref{lem:cp-techn-tools} \eqref{it:techn-b}. Note that for any $\delta >0$, 
\begin{equation}
\label{eqn:615}
	\pr(\sup_{u\in [0,1]} A(t,u) > e^{-\delta t}) \leq \pr(\sup_{u\in [0,1]} A(t,u) > e^{-\delta t}, Z_f(t)\leq e^{(\lambda+\delta})t) + \pr(Z_f(t)> e^{(\lambda + \delta)t}). 
\end{equation}
By Proposition \ref{prop:exp-mom-y}, $C_4 := \sup_{t\geq 0} e^{-\lambda s} \E(Z_f(s)) < \infty$. Thus by Markov's inequality,  for any $\delta >0$, $t \ge 0$,
\begin{equation}
\label{eqn:623}
	\pr(Z_f(t) > e^{(\lambda+\delta)t}) \leq C_4 e^{-\delta t}. 
\end{equation}
Now we deal with the first term in \eqref{eqn:615}. With $\omega$ as in Lemma \ref{lem:cp-techn-tools} \eqref{it:techn-b}, take $\delta \in (0, C_2/2)$ such that $\lambda - (\lambda +\delta)\omega > \delta$. With this choice of $\delta$, check that,  \[\set{\sup_{u\in [0,1] } A(t,u) > e^{-\delta t} , Z_f(t) \leq e^{(\lambda +\delta) t}} \subseteq \set{\sup_{u\in [0,1] } A(t,u) > e^{-\lambda t} (Z_f(t))^\omega , Z_f(t) \leq e^{(\lambda +\delta) t}}.\]
Thus Lemma \ref{lem:cp-techn-tools} \eqref{it:techn-b} (for the first inequality) and the second bound in Lemma \ref{lem:zft-lbd-1} imply that,
\begin{equation}
\label{eqn:738}
	\pr\left(\sup_{u\in [0,1]} A(t,u) > e^{-\delta t}, Z_f(t)\leq e^{(\lambda+\delta)}t\right) \leq \E\left[\frac{C_3}{(Z_f(t))^\gamma}\right]\leq C_5 e^{-C_6 t}.  
\end{equation}
Using \eqref{eqn:623} and \eqref{eqn:738} in \eqref{eqn:615}, we obtain
\begin{equation}
\pr\left(\sup_{u\in [0,1]}|e^{-\lambda t} Z_f(t+u) - e^{-\lambda t} \E(Z_f(t+u)|\BP_f(t))| > e^{-\delta t}\right) \le C_7 e^{-C_8 t}.
\end{equation}
This bound, combined with Markov's inequality and Lemma \ref{lem:cp-techn-tools} \eqref{it:techn-a} (recalling that $\delta \in (0, C_2/2)$), shows that for all $t \ge 0$,
\begin{equation}
\label{eqn:814}
	\pr\left[\sup_{u\in [0,1]} \left|e^{-\lambda t} Z_f(t+u) - W_\infty\sum_{l=0}^\infty \lambda_l(u) p_{l+1}\right| > e^{-\delta t}\right] \leq C_9 e^{-C_{10} t}. 
\end{equation}
Before proceeding we need the following identity. 
\begin{lemma}\label{lem:sumlap-1}
	For any $u\geq 0$, $\sum_{l=0}^\infty e^{-\lambda u} \lambda_l(u) p_{l+1} =1$.
\end{lemma}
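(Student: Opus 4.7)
The plan is to read the identity $\sum_{l \geq 0} e^{-\lambda u} \lambda_l(u) p_{l+1} = 1$ as the almost sure coincidence of two independently derived asymptotic descriptions of the normalized conditional mean $e^{-\lambda t}\E(Z_f(t+u) \mid \BP_f(t))$ as $t \to \infty$, at a fixed $u \geq 0$.

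For the first description I would specialize Lemma~\ref{lem:cp-techn-tools}\eqref{it:techn-a} to the single time $u$: as soon as $t$ is large enough that $u \leq \eps t$, the $L^1$ supremum bound in that lemma implies, after letting $t \to \infty$, that
\[
e^{-\lambda t}\,\E(Z_f(t+u) \mid \BP_f(t)) \;\xrightarrow[t \to \infty]{L^1}\; W_\infty \sum_{l=0}^\infty \lambda_l(u)\, p_{l+1}.
\]
For the second description I would use the $L^2$ limit theorem already in hand. By Theorem~\ref{prop:convg-limit}, $e^{-\lambda(t+u)} Z_f(t+u) \to W_\infty$ in $L^2$ as $t \to \infty$, and pulling the deterministic factor $e^{-\lambda(t+u)}$ inside the conditional expectation gives
\[
e^{-\lambda(t+u)}\,\E(Z_f(t+u) \mid \BP_f(t)) \;=\; \E\bigl(e^{-\lambda(t+u)} Z_f(t+u) \mid \BP_f(t)\bigr).
\]
Combining $L^2$-contractivity of conditional expectation with the $L^2$-martingale convergence $\E(W_\infty \mid \BP_f(t)) \to W_\infty$ (applicable because $W_\infty \in L^2$ and is $\sigma(\BP_f(s):s\geq 0)$-measurable, both from Theorem~\ref{prop:convg-limit}), the right-hand side tends to $W_\infty$ in $L^2$, hence in $L^1$. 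Multiplying by $e^{\lambda u}$ yields
\[
e^{-\lambda t}\,\E(Z_f(t+u) \mid \BP_f(t)) \;\xrightarrow[t \to \infty]{L^1}\; e^{\lambda u}\, W_\infty.
\]

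Uniqueness of $L^1$-limits then forces $W_\infty \sum_{l \geq 0} \lambda_l(u)\, p_{l+1} = e^{\lambda u}\, W_\infty$ almost surely; dividing by $W_\infty$, which is a.s.\ strictly positive by Theorem~\ref{prop:convg-limit}, gives the claimed identity. The only genuinely delicate ingredient is the passage from $L^2$-convergence of $e^{-\lambda(t+u)} Z_f(t+u)$ to $L^2$-convergence of its conditional expectation, which is where $L^2$-martingale convergence on the filtration $\{\BP_f(t)\}_{t \geq 0}$ is invoked.
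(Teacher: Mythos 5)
Your proof is correct, and the underlying idea is the same as the paper's: produce two asymptotic descriptions of the same quantity, one carrying the factor $\sum_{l}\lambda_l(u)p_{l+1}$ and one carrying $e^{\lambda u}$, and cancel $W_\infty>0$. The implementation differs in which object you compare. The paper works with the population size itself: it invokes the bound \eqref{eqn:814} (which already combines both parts of Lemma \ref{lem:cp-techn-tools}) to identify the limit of $e^{-\lambda t}Z_f(t+u)$ as $W_\infty\sum_l \lambda_l(u)p_{l+1}$, and matches this against the almost sure limit $e^{\lambda u}W_\infty$ from Theorem \ref{prop:convg-limit} --- a two-line argument given what precedes the lemma. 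You instead compare at the level of the conditional mean $\E(Z_f(t+u)\mid \BP_f(t))$: Lemma \ref{lem:cp-techn-tools}\eqref{it:techn-a} specialized to a fixed $u$ (valid once $t\ge u/\eps$) gives one $L^1$ limit, and the other is obtained from the $\bL^2$ convergence in Theorem \ref{prop:convg-limit} via contractivity of conditional expectation plus L\'evy's upward theorem (which applies since $W_\infty$ is $\sigma(\cup_t\cF(t))$-measurable as an a.s.\ limit of adapted variables, and lies in $\bL^2$). This buys you independence from \eqref{eqn:814} and from part (b) of the technical lemma, at the cost of an extra (standard) martingale-convergence step; both routes are sound.
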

\begin{proof}
	By Theorem \ref{prop:convg-limit}, for any $u \ge 0$, $e^{-\lambda(t+u)} Z_f(t+u) \convas W_\infty$ as $t \rightarrow \infty$, with $W_\infty > 0$ a.s. Using \eqref{eqn:814} now concludes the proof. 
	
\end{proof}

Using Lemma \ref{lem:sumlap-1} in \eqref{eqn:814} now shows that for all $t \ge 0$, 
\begin{equation}
\label{eqn:828}
	\pr\left[\sup_{u\in [0,1]} \left|e^{-\lambda (t+u)} Z_f(t+u) - W_\infty\right| > e^{-\delta t}\right] \leq C_9 e^{-C_{10} t}. 
\end{equation}
 By the union bound, for any $t \ge 0$, $\pr(\sup_{s\geq  t} |e^{-\lambda s} Z_f(s) - W_\infty| > e^{-\delta t})$ can be bounded by
 \[\sum_{k=0}^\infty\pr\left[\sup_{u\in [0,1]} \left|e^{-\lambda (t+k+u)} Z_f(t+k+u) - W_\infty\right| > e^{-\delta (t+k)}\right] \leq \sum_{k=0}^\infty C_9 e^{-C_{10} (t+k)} \leq C_{11} e^{-C_{10} t}, \]
where the second inequality follows from \eqref{eqn:828}. This concludes the proof of Theorem \ref{thm:expon-convergence}. \qed

\subsubsection{Proof of Corollary \ref{cor:sup-bound}:} Note that for any fixed $T> 0$,

\begin{align}
	\pr\left(\sup_{s > T} e^{-\lambda s} Z_f(s) \geq A\right) &\leq \pr\left(\sup_{s> T}|e^{-\lambda s}Z_f(s) - W_\infty| \geq A/2\right) + \pr(W_\infty \geq A/2) \notag\\
	&\leq C_1e^{-C^\prime T} +  C_2/A^2, \label{eqn:1259}
\end{align}
where the first bound in \eqref{eqn:1259} follows from Theorem \ref{thm:expon-convergence} and the second bound follows from Chebyshev's inequality and Theorem \ref{thm:exponential-mom} which in particular implies $\E(W_\infty^2) <\infty$. Now we consider the interval $[0,T]$. Divide the interval into a partition $0 = t_0 < t_1 < \cdots < t_N = T$ of mesh size one (other than the last interval which could be of length $\leq 1$). Since $Z_f(s)$ is $\uparrow$ in $s$ a.s, we have for any $i$, $\pr\left(\sup_{s\in [t_i, t_{i+1}]} e^{-\lambda s} Z_f(s) \geq A\right) \leq \pr\left(e^{-\lambda t_i}Z_f(t_{i+1}) \geq A\right)$. This can be bounded as,
\begin{equation}
\label{eqn:1315}
	\pr\left(e^{-\lambda t_{i+1}}Z_f(t_{i+1}) \geq Ae^{-\lambda(t_{i+1} - t_i)}\right) \leq C_3/ A^2,
\end{equation}
where for the last bound we have used the fact that Theorem \ref{prop:convg-limit} implies that $\sup_t \E([e^{-\lambda t}Z_f(t)]^2) < \infty$. Using \eqref{eqn:1259} and \eqref{eqn:1315} and the union bound now gives, 
\[\pr\left(\sup_{s>0}e^{-\lambda s}Z_f(s)\geq A\right)\leq C_1e^{-C^\prime T} + C_4(T+1)/A^2.\]
Taking $T = M\log{A}$ for sufficiently large $M$ completes the proof. \qed

\section{Proofs: Root finding algorithms}

\subsection{Deterministic properties of Jordan centrality}
\label{sec:proofs-det-jordan}
The following lemma collects useful properties of the Jordan centrality measure $\Psi$ defined in \eqref{eqn:psi-def}. We will only use property (a) in this paper, but list the other properties as they might be useful for readers unfamiliar with this centrality measure.

\begin{lemma}\label{lem:jord-cent-det}
	Let $\vt \in \bT$ be a tree on $n\geq 3$ vertices and let $v^* = v^*(\vt)$ denote a centroid of $\vt$. The following properties hold for the centroid and the Jordan centrality measure:
	\begin{enumeratea}
		\item \label{it:jord-nesc-suff} By \cite[Lemma 13]{jog2016analysis}, for any two vertices $u,v\in \vt$, 
		\[\Psi_{\vt}(u) \leq \Psi_{\vt}(v) \Longleftrightarrow |\desc{(\vt,v)}{u}|\geq |\desc{(\vt,u)}{v}|. \]
		\item By \cite[Lemma 2.1]{jog2018persistence} $\Psi_{\vt}(v^*)\leq n/2$. Further $\vt$ can have at most two centroids which have to be adjacent to each other. If two distinct centroids $u^*$ and $v^*$ exist then 
		\[\Psi_{\vt}(u^*) = |\desc{(\vt,u^*)}{v^*}|, \qquad \Psi_{\vt}(v^*) = |\desc{(\vt,v^*)}{u^*}|. \]
		\item Consider a sequence of growing trees $\set{\cT(n):n\geq 1}$ with vertex set $V(\cT(n)) = \set{v_1, v_2, \ldots, v_n}$ in order of appearance of the vertices. Let $v^*(n)$ denote a centroid in $\cT(n)$. Then by \cite[Lemma 2.3]{jog2018persistence} $|(\desc{\cT_{n+1},v_{n+1})}{v^*(n)}|\geq n/2$. 
	\end{enumeratea} 
\end{lemma}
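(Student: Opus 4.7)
The lemma collects three deterministic facts about the Jordan centrality measure, and all three reduce to careful accounting of subtree sizes via the single identity that cutting an edge $e = \{x,y\}$ of $\vt$ partitions $\vt$ into two components whose sizes sum to $n$. The unifying reformulation I would use throughout is $\Psi_\vt(v) = \max_{w \sim v}|C_{w,v}|$, where $C_{w,v}$ is the component of $\vt \setminus \{w,v\}$ containing $w$; equivalently, $\Psi_\vt(v)$ is the size of the heaviest subtree hanging off $v$ when $\vt$ is rooted at $v$.

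For part (a), fix $u \neq v$ and set $A := |\desc{(\vt,v)}{u}|$ and $B := |\desc{(\vt,u)}{v}|$. The sets $\desc{(\vt,v)}{u}$ and $\desc{(\vt,u)}{v}$ are disjoint subsets of $\vt$ (they lie on opposite sides of every edge on the $u$--$v$ path), so $A + B \leq n$ with equality iff $u, v$ are adjacent. Letting $u'$ and $v'$ denote the neighbors of $v$ and $u$ respectively on the $u$--$v$ path, I would decompose $\Psi_\vt(v) = \max(n - B,\, M_v)$, where $n - B = |C_{u',v}|$ and $M_v$ is the max subtree size among the other neighbors of $v$; all those other subtrees lie inside $\desc{(\vt,u)}{v} \setminus \{v\}$, so $M_v \leq B - 1$. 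Symmetrically $\Psi_\vt(u) = \max(n - A,\, M_u)$ with $M_u \leq A - 1$. The equivalence then follows by a short case analysis: if $A \geq B$, then $n - A \leq n - B$ and $M_u \leq A - 1 \leq n - B$ (using $A + B \leq n$), whence $\Psi_\vt(u) \leq n - B \leq \Psi_\vt(v)$; if $A < B$, then $\Psi_\vt(u) \geq n - A > \max(n - B,\, B - 1) \geq \Psi_\vt(v)$.

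For part (b), the bound $\Psi_\vt(v^*) \leq n/2$ follows directly from (a): if $\Psi_\vt(v) > n/2$, some child $u$ of $v$ has $|\desc{(\vt,v)}{u}| > n/2$, so $|\desc{(\vt,u)}{v}| = n - |\desc{(\vt,v)}{u}| < |\desc{(\vt,v)}{u}|$, and (a) gives $\Psi_\vt(u) < \Psi_\vt(v)$, ruling out $v$ as a centroid. For uniqueness up to two adjacent centroids, I would argue along the path $u^* = w_0, w_1, \ldots, w_k = v^*$ joining two centroids: as $i$ increases, the $u^*$-side component obtained by cutting $\{w_i, w_{i+1}\}$ only grows, so by the adjacent case of (a) applied to consecutive pairs, $\Psi_\vt(w_i)$ is unimodal along the path, and the equality $\Psi_\vt(u^*) = \Psi_\vt(v^*)$ forces $k \leq 1$. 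The explicit formulas for the $\Psi$-values at two centroids then fall out of the decomposition used in (a), since at each centroid the heaviest child subtree must be the one containing the other centroid. Part (c) is an immediate corollary of (b) applied to $\cT(n)$: every child subtree at $v^*(n)$ has size $\leq n/2$, in particular the subtree $T_w$ containing $w$, the parent of $v_{n+1}$ in $\cT_{n+1}$. Rerooting $\cT_{n+1}$ at $v_{n+1}$ is equivalent to rerooting $\cT(n)$ at $w$ and then attaching $v_{n+1}$ as an extra ancestor; under this rerooting $\desc{(\cT_{n+1}, v_{n+1})}{v^*(n)} = \cT(n) \setminus T_w$, which has size $n - |T_w| \geq n/2$.

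The main obstacle I anticipate is part (a) when $u, v$ are non-adjacent: a naive attempt to iterate the adjacent case along the $u$--$v$ path fails since pairwise equivalences do not straightforwardly compose. The global approach via the single slack inequality $A + B \leq n$ sketched above bypasses this, and the same monotonicity of subtree sizes along paths then feeds directly into the unimodality argument underlying uniqueness of the centroid in (b).
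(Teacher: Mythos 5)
Your proof is correct, but it takes a genuinely different route from the paper: the paper offers no argument at all for this lemma, simply citing \cite[Lemma 13]{jog2016analysis} for (a) and \cite[Lemmas 2.1 and 2.3]{jog2018persistence} for (b) and (c), whereas you give a self-contained elementary derivation. Your key device --- writing $\Psi_{\vt}(v)=\max(n-B,\,M_v)$ with $M_v\leq B-1$ and exploiting the slack $A+B\leq n$ --- cleanly handles the non-adjacent case of (a) in one stroke, and I verified the case analysis: $A\geq B$ gives $\Psi_{\vt}(u)\leq n-B\leq\Psi_{\vt}(v)$ via $M_u\leq A-1\leq n-B$, while $A<B$ gives $\Psi_{\vt}(u)\geq n-A>\max(n-B,B-1)\geq\Psi_{\vt}(v)$; note that this case analysis actually yields the \emph{strict} implication $A>B\Rightarrow\Psi_{\vt}(u)<\Psi_{\vt}(v)$ (apply the second case with the roles of $u$ and $v$ swapped), which is exactly what you need later. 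The deductions of (b) and (c) from (a) are sound: the $n/2$ bound, the identification of $\desc{(\cT(n+1),v_{n+1})}{v^*(n)}$ with the complement of the child subtree of $v^*(n)$ containing the parent of $v_{n+1}$, and the formulas at twin centroids (forced by $a=n-a=n/2$ together with $M_{u^*}\leq n/2-1$) all check out. The only place I would tighten the write-up is the ``unimodality forces $k\leq 1$'' step: unimodality with equal endpoints alone would permit a constant profile, so you should make explicit that the side-component sizes $a_i$ along the path increase \emph{strictly} (each step absorbs at least the vertex $w_{i+1}$), whence $w_0$ being a centroid forces $a_0\geq n/2$, hence $a_i>n/2$ for $i\geq 1$, hence $\Psi_{\vt}(w_1)<\Psi_{\vt}(w_2)<\cdots<\Psi_{\vt}(w_k)$, contradicting $\Psi_{\vt}(w_k)=\Psi_{\vt}(w_0)\leq\Psi_{\vt}(w_1)$ when $k\geq 2$. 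This is a presentational gap, not a mathematical one; what your approach buys over the paper's is a complete, citation-free proof of all three facts from a single subtree-size identity.
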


\subsection{Proof of Theorem \ref{thm:upp-root-find}:}

We will need a few preliminary results before commencing with the proof of the theorem. We first setup some notation. Fix an attachment function $f$ (and recall $f_* =\inf_{i\geq 1} f(i)$). Further throughout this section we will assume $f(i) \leq \overline{C}_f \cdot i + \beta$ for all $i$.  Consider the following two associated branching process: 

\begin{enumeratea}
	\item Let $\set{\LBP_f(t): t\geq 0}$ be the branching process where the root reproduces at constant rate $f_*$ whilst all other individuals produce offspring using the usual offspring point process $\xi_f$ as in \eqref{eqn:xi-f-def}. We will refer to this as the {\bf lower bounding} branching process (owing to the next lemma).  Write $\set{\LZF(t):t\geq 0}$ for the corresponding population size. 
	\item Fix $i\geq 0$ and let $\set{\BP_f^{\sss(i)}(t):t\geq 0}$ be the branching process where the root uses the point process $\xi_f^{\sss(i)}$ as in \eqref{eqn:xi-k-f-t-def} whilst all other individuals produce offspring using the usual offspring point process $\xi_f$ as in \eqref{eqn:xi-f-def}. Note that $\BP_f^{\sss(0)} = \BP_f$.

\end{enumeratea}

\begin{lemma}\label{lem:lower-bd}
	Suppose $f$ satisfies Assumptions \ref{ass:attach-func} and \ref{ass:lim-sup}. 
	\begin{enumeratea}
		\item For any $i\geq 0$ one can couple $\LBP_f$ and $\BP_f^{\sss(i)}$ so that $|\BP_f^{\sss(i)}(t)| \geq \LZF(t)$ for all $t\geq 0$. 
		\item Further writing $\LW := \liminf_{t\to\infty} e^{-\lambda t} \LZF(t)$ we have  $\LW >0$ a.s. and $\exists \kappa > 0$ such that for all $s\in (0,\kappa)$, $\E(\exp(s\LW)) < \infty$. 
	\end{enumeratea}
\end{lemma}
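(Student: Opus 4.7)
The plan for part (a) is a standard monotone coupling of exponentials. For any $i \geq 1$ and $j \geq 1$, the $j$-th inter-arrival time of the root's offspring in $\BP_f^{\sss(i)}$ has rate $f(i+j-1)\geq f_*$, so is stochastically dominated by the corresponding $\exp(f_*)$ inter-arrival in $\LBP_f$. Building both sequences from a common family of i.i.d.\ uniforms via the inverse-CDF method produces a pointwise coupling in which the $k$-th birth time of the root's children in $\BP_f^{\sss(i)}$ is at most its counterpart in $\LBP_f$. Attach identical i.i.d.\ copies of $\BP_f$ as the subtrees rooted at the $k$-th child of the root in both processes. Every subtree in $\BP_f^{\sss(i)}$ has then evolved at least as long as its counterpart in $\LBP_f$, which yields $|\BP_f^{\sss(i)}(t)| \geq \LZF(t)$ for every $t\geq 0$.

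For part (b), I would extract two one-sided comparisons out of part (a). For the a.s.\ positivity of $\LW$, the branching property at the root of $\LBP_f$ gives the pathwise lower bound
\[
\LZF(t) \geq Z_f^{(1)}(t - \sigma_1^L)\,\mathbf{1}_{\sigma_1^L \leq t},
\]
where $\sigma_1^L \sim \exp(f_*)$ is the first birth time of the root in $\LBP_f$ and $Z_f^{(1)}$ is an independent copy of $Z_f$ driving the subtree hanging off that child. Multiplying by $e^{-\lambda t}$ and applying Theorem \ref{prop:convg-limit} to $Z_f^{(1)}$ produces
\[
\LW = \liminf_{t\to\infty} e^{-\lambda t}\LZF(t) \geq e^{-\lambda \sigma_1^L}\, W_\infty^{(1)},
\]
which is strictly positive a.s.\ since $\sigma_1^L < \infty$ a.s.\ and $W_\infty^{(1)} > 0$ a.s.

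For the exponential moment bound, the key observation is that part (a) applies with $i = 1$: because $\xi_f^{\sss(1)} = \xi_f$, we have $\BP_f^{\sss(1)} = \BP_f$, so the coupling gives $\LZF(t)\leq Z_f(t)$ on the joint space for every $t\geq 0$. Hence
\[
\LW \leq \liminf_{t\to\infty} e^{-\lambda t}Z_f(t) = W_\infty \quad \text{a.s.,}
\]
the last equality being Theorem \ref{prop:convg-limit}. Since $W_\infty$ has finite moment generating function in a neighborhood of the origin by Theorem \ref{thm:exponential-mom}, the same conclusion transfers to $\LW$ via this pathwise domination, giving the required $\kappa > 0$.

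There is no serious obstacle here once part (a) is in hand: the entire argument is a bookkeeping exercise layered on top of the monotone exponential coupling and the previously established Theorems \ref{prop:convg-limit} and \ref{thm:exponential-mom}. The only mildly delicate point is ensuring that the coupling in part (a) (with $i=1$) is set up on a single probability space on which both $\LBP_f$ and $\BP_f$ live together with shared subtree structure, so that the pathwise comparison $\LW \leq W_\infty$ (and not merely a law-level stochastic domination) is available and directly transfers the MGF bound.
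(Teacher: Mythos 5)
Your proposal is correct and follows essentially the same route as the paper: part (a) via the standard monotone coupling of the root's inter-arrival exponentials (rates $f(\cdot)\ge f_*$) with identical offspring subtrees, positivity of $\LW$ via the pathwise lower bound $e^{-\lambda t}\LZF(t)\ge e^{-\lambda \sigma_1^L}e^{-\lambda(t-\sigma_1^L)}Z_f^{(1)}(t-\sigma_1^L)$ together with Theorem \ref{prop:convg-limit}, and the exponential moment via domination of $\LW$ by $W_\infty$ combined with Theorem \ref{thm:exponential-mom}. Your closing caveat is unnecessary: since $x\mapsto e^{sx}$ is increasing for $s>0$, the law-level stochastic domination $\LW \stod W_\infty$ (which is what the paper itself invokes) already suffices to transfer the moment generating function bound.
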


\begin{proof}
	Part(a) follows easily by comparing the rates of evolution of individual vertices in each of the two processes. To prove (b) first note that by (a), almost surely, $\liminf_{t\to\infty} e^{-\lambda t} \LZF(t) \stod \lim_{t\to\infty} e^{-\lambda t} |\BP_f(t)|:=W_\infty$ as in Theorem \ref{prop:convg-limit}. Thus the second set of assertions about finite exponential moments follow from Theorem \ref{thm:exponential-mom}. To prove the first assertion,  let $\underline{\sigma}_1$ denote the time of birth of the first individual of the root in $\LBP_f$ and for any $s\geq 0$ let $Z_{f, 1}(s)$ denote the size of the subtree of the first child of the root $s$ units of time after birth of this child. Now for any $t\geq \underline{\sigma}_1$, one has $e^{-\lambda t} \LZF(t) \geq e^{-\lambda \underline{\sigma}_1} e^{-\lambda(t-\underline{\sigma}_1)} Z_{f, 1}(t-\underline{\sigma}_1)$. Since $e^{-\lambda s} Z_{f,1}(s) \convas W_{1,\infty}$ as $s \rightarrow \infty$, where $W_{1,\infty}  \stackrel{d}{=} W_{\infty}$ as in Theorem \ref{prop:convg-limit}, the first assertion on strict positivity follows from Theorem \ref{prop:convg-limit}. 
	
\end{proof}

Now recall that in the continuous time construction of $\cT_f(\cdot)$ in Lemma \ref{lem:ctb-embedding-no-cp}, the process evolves by starting two independent branching processes $\BP_f^{\sss(i \downarrow)}$ from the two initializers $v_1$ and $v_2$ which are connected by an edge. Define the stopping times
\begin{equation}
\label{eqn:eta-def}
\eta^{\sss(i)}_j = \inf\set{t\geq 0: |\BP_f^{\sss(i \downarrow)}(t)| =j}, \ j \ge 1, \ i=1,2. 
\end{equation}
Thus the stopping times $\{\eta^{\sss(1)}_j : j \ge 1\}$ track when the tree below $v_1$ becomes of size $j$. The next lemma derives bounds showing that the tree below $v_2$ cannot be ``too small'' relative to the size of the tree below $v_1$ at \emph{any} of these times.  We need the following constants, 
\begin{equation}
\label{eqn:201}
	\tilde{C} = \frac{1}{2(2\overline{C}_f + \beta)^2} \sum_{l=1}^\infty \frac{1}{l^2}, \qquad B = \frac{4\tilde{C}f_*^3}{2\overline{C}_f + \beta}. 
\end{equation}
\begin{lemma}\label{lem:no-too-small}
	There exists a constant $D>0$ such that for any $M> 0$ one can obtain integer $\tilde K_M \ge 2$ such that, with B as in \eqref{eqn:201},
	\[\pr\left(\cup_{j=k}^\infty \set{|\BP_f^{\sss(2 \downarrow)}(\eta^{\sss(1)}_j)| \leq M\log{j}} \right) \leq \frac{De^{2\sqrt{(B+M)\log{k}}}}{k^{f_*/(2\overline{C}_f+\beta)}}, \ \ k \ge \tilde K_M.\]	
\end{lemma}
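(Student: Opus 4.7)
The plan is to combine conditioning on $\BP_f^{\sss(1\downarrow)}$ with the stochastic dominations from Proposition \ref{prop:bpf1-2pdom}(b), convert the problem to a per-$j$ estimate via the exact tail of the Yule process, and then upgrade to a bound on the union over $j \ge k$ by a dyadic decomposition exploiting monotonicity. The per-$j$ probability should behave as $M \log j \cdot j^{-\gamma}$ with $\gamma := f_*/(2\overline{C}_f + \beta)$, which after summing gives an $O(M \log k \cdot k^{-\gamma})$ estimate --- comfortably inside the stated envelope since $\log\log k = o(\sqrt{\log k})$.

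For the per-$j$ estimate I would fix $j \ge k$ and use Proposition \ref{prop:bpf1-2pdom}(b) to couple $\BP_f^{\sss(2\downarrow)}$ with a rate-$f_*$ Yule process $Y_{f_*}$ (independent of $\BP_f^{\sss(1\downarrow)}$) so that $|\BP_f^{\sss(2\downarrow)}(t)| \ge Y_{f_*}(t)$. Writing $n_j := \lfloor M \log j \rfloor$ and using the geometric law from Lemma \ref{lem:yule-prop}, $\pr(Y_{f_*}(t) \le n_j) = 1 - (1-e^{-f_* t})^{n_j} \le n_j e^{-f_* t}$; conditioning on $\eta^{\sss(1)}_j$ (measurable with respect to $\BP_f^{\sss(1\downarrow)}$) then yields
\begin{equation*}
\pr\big(|\BP_f^{\sss(2\downarrow)}(\eta^{\sss(1)}_j)| \le M \log j\big) \le n_j\, \E\big[e^{-f_* \eta^{\sss(1)}_j}\big].
\end{equation*}
For the Laplace transform I would invoke the other domination in Proposition \ref{prop:bpf1-2pdom}(b), $|\BP_f^{\sss(1\downarrow)}(t)| \le |\LPA_{\overline{C}_f,\beta}(t)|$, which gives $\eta^{\sss(1)}_j \ge \eta^{\LPA}_j := \inf\{t:|\LPA_{\overline{C}_f,\beta}(t)| = j\}$ pathwise. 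Summing the rates $f(d_v+1) = \overline{C}_f(d_v+1)+\beta$ across all $n$ vertices (and using $\sum_v d_v = n-1$) shows that $|\LPA_{\overline{C}_f,\beta}(\cdot)|$ is a pure birth chain with rate $\nu n - \overline{C}_f$ at state $n$, where $\nu := 2\overline{C}_f + \beta$, so $\eta^{\LPA}_j$ is a sum of independent exponentials of these rates for $n=1,\ldots,j-1$. The product formula for the Laplace transform then gives
\begin{equation*}
\E\big[e^{-f_* \eta^{\sss(1)}_j}\big] \le \prod_{n=1}^{j-1}\frac{\nu n - \overline{C}_f}{\nu n - \overline{C}_f + f_*} \le C_0\, j^{-\gamma},
\end{equation*}
using $\log(1-x) \le -x$, the elementary bound $\nu n - \overline{C}_f + f_* \le \nu n + f_*$ (valid since $\overline{C}_f > 0$), and the harmonic estimate $\sum_{n=1}^{j-1}(n+\gamma)^{-1} \ge \log j - c(\gamma)$.

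To handle the union over $j \ge k$ I would slice dyadically. Since both $j \mapsto \eta^{\sss(1)}_j$ and $t \mapsto |\BP_f^{\sss(2\downarrow)}(t)|$ are non-decreasing, for every $m \ge 0$
\begin{equation*}
\bigcup_{j \in [2^m k,\, 2^{m+1}k)}\set{|\BP_f^{\sss(2\downarrow)}(\eta^{\sss(1)}_j)| \le M\log j} \subseteq \set{|\BP_f^{\sss(2\downarrow)}(\eta^{\sss(1)}_{2^m k})| \le M\log(2^{m+1}k)}.
\end{equation*}
Applying the per-$j$ bound at the anchor $j = 2^m k$ with the inflated threshold $M\log(2^{m+1}k)$ and summing the resulting geometric series (common ratio $2^{-\gamma} < 1$) gives
\begin{equation*}
\pr\!\left(\bigcup_{j \ge k}\set{|\BP_f^{\sss(2\downarrow)}(\eta^{\sss(1)}_j)| \le M\log j}\right) \le C' M \log k \cdot k^{-\gamma},
\end{equation*}
and choosing $\tilde K_M$ large enough that $C' M \log k \le D\, e^{2\sqrt{(B+M)\log k}}$ for $k \ge \tilde K_M$ (with $D$ a universal constant) delivers the claimed form. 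The main technical obstacle will be ensuring the constant $C_0$ in the Laplace transform bound is genuinely uniform in $j$, since the small-$n$ factors in the product can be delicate when $\nu n - \overline{C}_f$ is comparable to $f_*$; once that is handled, the Yule tail, dyadic bookkeeping, and absorption of $\log k$ into $e^{2\sqrt{(B+M)\log k}}$ are routine.
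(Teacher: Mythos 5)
Your argument is correct, and the per-$j$ estimate is obtained by a genuinely different (and in fact sharper) route than the paper's. The paper splits the event $\{|\BP_f^{\sss(2\downarrow)}(\eta_k^{\sss(1)})|\le 2M\log k\}$ at the deterministic time $\frac{\log k}{2\overline{C}_f+\beta}-\sqrt{M'\log k}$: a Chernoff-type lower-tail bound for $\widetilde\eta_k$ (a sum of independent exponentials, via \eqref{eqn:tild-etj-dist}) controls the probability that $\eta_k$ falls below this time, and the geometric law of the Yule process controls the second piece; the Gaussian concentration for $\widetilde\eta_k$ is precisely what produces the $e^{2\sqrt{(B+M)\log k}}$ factor. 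You instead exploit the independence of the two subtrees to condition on $\eta_j^{\sss(1)}$ and pair the Yule tail $\pr(Y_{f_*}(t)\le n)\le n e^{-f_*t}$ directly with the Laplace transform $\E[e^{-f_*\widetilde\eta_j}]=\prod_{l=1}^{j-1}\frac{\nu l-\overline{C}_f}{\nu l-\overline{C}_f+f_*}\le C_0 j^{-f_*/\nu}$, which yields the cleaner per-$j$ bound $C_0 M\log j\cdot j^{-f_*/(2\overline{C}_f+\beta)}$ with no sub-Gaussian correction; your worry about uniformity of $C_0$ is unfounded since each factor lies in $(0,1)$ and the harmonic comparison is uniform in $j$. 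The block decomposition over $j\ge k$ is the same idea as the paper's (which uses blocks $[k^m,k^{m+1}]$ rather than dyadic ones, both resting on monotonicity of $\eta_j^{\sss(1)}$ in $j$ and of $|\BP_f^{\sss(2\downarrow)}(\cdot)|$ in $t$, plus a geometric series). Your final bound $C'M\log k\cdot k^{-f_*/(2\overline{C}_f+\beta)}$ is strictly stronger than the stated one for large $k$, and the absorption of $M\log k$ into $De^{2\sqrt{(B+M)\log k}}$ by choosing $\tilde K_M$ (with $D$ a constant depending only on $f$) is legitimate. The trade-off is that the paper's time-threshold decomposition is reused verbatim in the proof of Theorem \ref{thm:upp-root-find}(b) with $\yu_{f^*}$ in place of $\LPA_{\overline{C}_f,\beta}$, whereas your Laplace-transform computation would need the corresponding product recomputed there; but your route buys a quantitatively better lemma at no extra cost.
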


\begin{rem}
	Owing to the square root in the exponent appearing in the above bound, the bound is small for large $k$ (for fixed choice of $M$). 
	Also note that, by symmetry, the same bound holds if we replace $\BP^{\sss(2\downarrow)}_f$ by $\BP_f^{\sss(1 \downarrow)}$ and $\eta^{\sss(1)}_j$ by $\eta^{\sss(2)}_j$ in the above lemma.
\end{rem}

\begin{proof}
	To ease notation, in this proof we will write $\eta_j$ for $\eta_j^{(1)}$. Recall the upper bounding $\LPA_{\overline{C}_f, \beta}$ process for $\BP^{\sss(1 \downarrow)}$ as in Proposition \ref{prop:bpf1-2pdom}(b). Define 
	\begin{equation}
	\label{eqn:eta-tild-def}
		\widetilde{\eta}_j:= \inf\set{t\geq 0:|\LPA_{\overline{C}_f, \beta}(t)| = j}.
	\end{equation}
	Obviously $\eta_j \geq \widetilde{\eta}_j$. Further it is easy to check that 
	\begin{equation}
	\label{eqn:tild-etj-dist}
		\widetilde{\eta}_j \stackrel{d}{=}  \sum_{l=1}^{j-1}\frac{Y_l}{(2\overline{C}_f+\beta)\cdot l-\overline{C}_f},
	\end{equation}
	where $\set{Y_l:l\geq 1}$ are a sequence of \emph{i.i.d.} exponential rate one random variables. Now we derive a concentration bound for $\widetilde{\eta}_k$ for $k \ge 2$ using standard moment generating function techniques. Note that for any $k \ge 2$, $\theta>0$,
\begin{align*}
\log \mathbb{E}\left(e^{-\theta \widetilde{\eta}_k}\right) &\le -\sum_{l=1}^{k-1}\log\left(1 + \frac{\theta}{(2\overline{C}_f + \beta) l}\right)\\
&\le -\sum_{l=1}^{k-1} \frac{\theta}{(2\overline{C}_f + \beta) l} + \frac{\theta^2}{2(2\overline{C}_f + \beta)^2}\sum_{l=1}^{\infty}\frac{1}{l^2} \le -\frac{\theta}{(2\overline{C}_f + \beta)} \log k + \theta^2 \tilde{C},
\end{align*}
with $\tilde{C}$ as in \eqref{eqn:201}. Therefore, for any $t >0$,
$$
\pr\left(\widetilde{\eta_k} \leq \frac{\log{k}}{2\overline{C}_f+\beta} - t\right) \le e^{\theta \left(\frac{\log{k}}{2\overline{C}_f+\beta}\log k - t\right)}\mathbb{E}\left(e^{-\theta \widetilde{\eta}_k}\right) \le e^{\theta^2 \tilde{C} - \theta t}.
$$
Optimizing over $\theta$, we obtain for any $t>0$ and $k\geq 2$, 
	\begin{equation}
	\label{eqn:eta-tail}
		\pr\left({\eta_k} \leq \frac{\log{k}}{2\overline{C}_f+\beta} - t \right) \leq \pr\left(\widetilde{\eta_k} \leq \frac{\log{k}}{2\overline{C}_f+\beta} - t \right) \leq  e^{-t^2/4\tilde{C}}.
	\end{equation}
	For fixed $M$ and let 
	\begin{equation}
	\label{eqn:m-prime}
		M^\prime:= \frac{M}{f_*^2} + \frac{4\tilde{C}f_*}{2\overline{C}_f+\beta}.
	\end{equation}
	Moving to the evolution $\BP^{\sss(2 \downarrow)}_f(\cdot)$, again Proposition \ref{prop:bpf1-2pdom}(b), note that $\BP^{\sss(2 \downarrow)}_f$ can be \emph{lower} bounded by $\yu_{f_*}$. Thus for fixed $k$, 
	\begin{align*}
		\pr(|\BP^{\sss(2 \downarrow)}_f(\eta_k)| \leq 2M \log{k}) \leq & \pr\bigg(\eta_k \leq \frac{\log{k}}{2\overline{C}_f+\beta} - \sqrt{M^\prime \log{k}}\bigg)\\ 
		& + \pr\left(\left|\yu_{f_*}\bigg(\frac{\log{k}}{2\overline{C}_f+\beta} - \sqrt{M^\prime \log{k}}\bigg)\right| \leq 2M\log{k}\right).
	\end{align*}
	Using \eqref{eqn:eta-tail} with $t = \sqrt{M^\prime \log{k}}$ to estimate the first term, and the explicit distribution of the Yule process from Lemma \ref{lem:yule-prop} along with the union bound to estimate the second term, we obtain after some algebra, 
	\begin{equation}
	\label{eqn:250}
		\pr(|\BP^{\sss(2 \downarrow)}_f(\eta_k)| \leq 2M \log{k}) \leq D^{\prime} \frac{ e^{2\sqrt{(B+M)\log{k}}}}{k^{f_*/(2\overline{C}_f+\beta)}},
	\end{equation}
	where $D^{\prime} = 1+\sup_{x\geq 0} 2x^2 e^{-x}$ and $B$ is as in \eqref{eqn:201}. Now for any $m\geq 1$, note that 
	\begin{align*}
	\cup_{j=k^{m}}^{k^{m+1}}\set{|\BP^{\sss(2 \downarrow)}_f(\eta_j)| \leq M \log{j}} &\subseteq \set{|\BP^{\sss(2 \downarrow)}_f(\eta_{k^m})| \leq  M \log{k^{m+1}}}\\
	&\subseteq \set{|\BP^{\sss(2 \downarrow)}_f(\eta_{k^m})| \leq  2M \log{k^{m}}}.
	\end{align*}
	 Using this observation, along with \eqref{eqn:250}, we get 
	\begin{align}
	\label{eqn:255}
		\pr\left(\cup_{j=k^{m}}^{k^{m+1}}\set{|\BP^{\sss(2 \downarrow)}_f(\eta_j)| \leq M \log{j}}\right) &\leq \pr\left(|\BP^{\sss(2 \downarrow)}_f(\eta_{k^m})| \leq  2M \log{k^{m}}\right)\notag\\
		& \leq D^{\prime} \frac{ e^{2\sqrt{(B+M) m\log{k}}}}{k^{m f_*/(2\overline{C}_f+\beta)}}. 
	\end{align}
	Let $\tilde K_M \ge 2$ be such that $\frac{ e^{2\sqrt{(B+M) \log{k}}}}{k^{f_*/(2\overline{C}_f+\beta)}} \le \frac{1}{2}$ for all $k \ge \tilde K_M$. Then, from the above bound, summing over $m \ge 1$,
	\begin{align*}
	\pr\left(\cup_{j=k}^\infty \set{|\BP_f^{\sss(2 \downarrow)}(\eta^{\sss(1)}_j)| \leq M\log{j}} \right) &\le \sum_{m=1}^{\infty}D^{\prime} \frac{ e^{2\sqrt{(B+M) m\log{k}}}}{k^{m f_*/(2\overline{C}_f+\beta)}}\\
	& \le \sum_{m=1}^{\infty}D^{\prime} \left(\frac{ e^{2\sqrt{(B+M) \log{k}}}}{k^{ f_*/(2\overline{C}_f+\beta)}}\right)^m \le 2D^{\prime} \frac{e^{2\sqrt{(B+M) \log{k}}}}{k^{f_*/(2\overline{C}_f+\beta)}},
	\end{align*}
	for all $k \ge \tilde K_M$. This completes the proof.
\end{proof}

\noindent {\bf Proof of Theorem \ref{thm:upp-root-find}:} We start with (a). We will continue to use the notation and constructs setup for the proof of Lemma \ref{lem:no-too-small}. We will assume $\set{\cT_f(n):n\geq 2}$ has been constructed using the continuous time embedding as in Lemma \ref{lem:ctb-embedding-no-cp} so that in particular $\cT_f(n) = \widetilde{\BP}_f(\eta_n^{\sss(1,2)})$ for the stopping times $\eta_n^{\sss(1,2)}$ defined in \eqref{eqn:tm-stop-def}. As under Definition \ref{def:root-find}, for $K\geq 2$, let $H_{K,\Psi}(\cT^\circ(n))$ denote the $K$ vertices with the smallest $\Psi$ values.   Recall the stopping times $\{\eta^{\sss(i)}_j : j \ge 1, i = 1,2\}$ defined in \eqref{eqn:eta-def}. 
For fixed $M> 0$, define  
\[\cS_K^{M}:= \left(\cup_{j=\lfloor K/2 \rfloor + 1}^{\infty}\set{|\BP_f^{\sss(2 \downarrow)}(\eta_j^{\sss(1)})| \leq M\log{j}}\right) \bigcup \left(\cup_{j=\lfloor K/2 \rfloor + 1}^{\infty}\set{|\BP_f^{\sss(1 \downarrow)}(\eta_j^{\sss(2)})| \leq M\log{j}}\right) . \]
In words, this is the event when there exists a $j \ge K$ such that one of the two subtrees starting from the two initiators $v_1$ and $v_2$ reaches size $j$ but the other is still only logarithmic in this size. The following notation will be convenient later.  For fixed $M$ as above, let 
\begin{equation}
\label{eqn:km-def}
	K_M =  \tilde K_M \vee \inf\set{i\geq 4: l/2 > M\log{l/2} \ \forall \ l\geq i}.
\end{equation}
 Also note that for any $i\geq 3$,
 \begin{equation}
 \label{eqn:846}
 	|\desc{(\cT^\circ(i), v_i)}{v_1}| \geq \min\set{|\BP_f^{\sss(1 \downarrow)}(\eta_i^{\sss(1,2)})|, |\BP_f^{\sss(2 \downarrow)}(\eta_i^{\sss(1,2)})|}. 
 \end{equation}
 
 \begin{lemma}\label{lem:909}
 	Fix $K\geq K_M$ and consider the event $(\cS_K^M)^c$. On this event, for all $i> K$, $|\desc{(\cT^\circ(i), v_i)}{v_1}| \geq M\log(i/2)$. 
 \end{lemma}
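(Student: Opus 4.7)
The plan is to read off the lemma as a purely deterministic consequence of \eqref{eqn:846}, the monotonicity of the continuous-time subtree sizes, and the way $K_M$ was chosen in \eqref{eqn:km-def}. Concretely, \eqref{eqn:846} reduces the task to showing that, on $(\cS_K^M)^c$, both $|\BP_f^{\sss(1 \downarrow)}(\eta_i^{\sss(1,2)})|$ and $|\BP_f^{\sss(2 \downarrow)}(\eta_i^{\sss(1,2)})|$ exceed $M\log(i/2)$ for every $i>K$.

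First I would fix $i>K$ and observe that
\[
|\BP_f^{\sss(1 \downarrow)}(\eta_i^{\sss(1,2)})|+|\BP_f^{\sss(2 \downarrow)}(\eta_i^{\sss(1,2)})|=i,
\]
so at least one of the two sides has size $\ge\lceil i/2\rceil$. Since $\cS_K^M$ is symmetric in the two initial vertices $v_1,v_2$, I may assume without loss of generality that this larger side is $\BP_f^{\sss(1 \downarrow)}$, and write $j:=|\BP_f^{\sss(1 \downarrow)}(\eta_i^{\sss(1,2)})|$. Because $i\ge K+1$, an easy check gives $j\ge\lfloor K/2\rfloor+1$, which is precisely the range of indices over which $(\cS_K^M)^c$ carries information. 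Moreover, since $K\ge K_M$, the choice of $K_M$ in \eqref{eqn:km-def} ensures $\ell/2 > M\log(\ell/2)$ for every $\ell\ge K_M$; applied with $\ell=i$ this already gives $j\ge i/2 > M\log(i/2)$, so the larger side is controlled.

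For the smaller side I would use monotonicity in time. The function $t\mapsto|\BP_f^{\sss(1 \downarrow)}(t)|$ is nondecreasing, equal to $j$ at $\eta_i^{\sss(1,2)}$, so the first-passage definition of $\eta_j^{\sss(1)}$ in \eqref{eqn:eta-def} forces $\eta_j^{\sss(1)}\le\eta_i^{\sss(1,2)}$. On $(\cS_K^M)^c$, the complement of the first union in the definition of $\cS_K^M$ together with $j\ge\lfloor K/2\rfloor+1$ yields $|\BP_f^{\sss(2 \downarrow)}(\eta_j^{\sss(1)})|>M\log j$. Monotonicity of $|\BP_f^{\sss(2 \downarrow)}(\cdot)|$ now propagates this to $\eta_i^{\sss(1,2)}$, giving
\[
|\BP_f^{\sss(2 \downarrow)}(\eta_i^{\sss(1,2)})|\ \ge\ |\BP_f^{\sss(2 \downarrow)}(\eta_j^{\sss(1)})|\ >\ M\log j\ \ge\ M\log(i/2).
\]
Plugging the two bounds into \eqref{eqn:846} completes the proof, and the case where $\BP_f^{\sss(2 \downarrow)}$ is the larger side is handled identically by interchanging the roles of the two indices.

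There is no real obstacle here: the argument is pure bookkeeping once one has \eqref{eqn:846} and the two elementary monotonicity facts. The only point that needs mild care is checking that $j\ge\lfloor K/2\rfloor+1$ (so that $(\cS_K^M)^c$ actually applies at index $j$) and that $K_M$ is chosen large enough to make the comparison $i/2>M\log(i/2)$ available; both are immediate from \eqref{eqn:km-def} and the assumption $K\ge K_M$.
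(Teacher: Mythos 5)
Your proof is correct and follows the same route as the paper's: split according to which of the two subpopulations has size at least $i/2$ at time $\eta_i^{\sss(1,2)}$, control the larger side via the choice of $K_M$ in \eqref{eqn:km-def}, control the smaller side via $(\cS_K^M)^c$, and conclude with \eqref{eqn:846}. The paper's version is terser; your explicit step identifying $j=|\BP_f^{\sss(1\downarrow)}(\eta_i^{\sss(1,2)})|$, checking $j\ge\lfloor K/2\rfloor+1$, and using $\eta_j^{\sss(1)}\le\eta_i^{\sss(1,2)}$ with monotonicity is exactly the bookkeeping the paper leaves implicit.
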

 \begin{proof}
    For any $i\geq 2$, either $|\BP_f^{\sss(1 \downarrow)}(\eta_i^{\sss(1,2)})| \geq i/2$ or $|\BP_f^{\sss(2 \downarrow)}(\eta_i^{\sss(1,2)})| \geq i/2$. Fix $K\geq K_M$ and consider the event $(\cS_K^M)^c$. On this event, for fixed $i> K$, if $|\BP_f^{\sss(1 \downarrow)}(\eta_i^{\sss(1,2)})| \geq i/2$ then $|\BP_f^{\sss(2 \downarrow)}(\eta_i^{\sss(1,2)})| \geq M\log(i/2)$ and vice-versa. Using \eqref{eqn:846} and the definition of $K_M$ in \eqref{eqn:km-def} completes the proof. 

 \end{proof}

 Note that $\set{v_1 \notin H_{K,\Psi}(\cT^\circ(n))} \subseteq \set{\exists i> K: \Psi_{\cT^\circ(n)}(v_i) \le\Psi_{\cT^\circ(n)}(v_1) }$. Further by Lemma \ref{lem:jord-cent-det}(a)
\[\set{\exists i> K: \Psi_{\cT^\circ(n)}(v_i) \le \Psi_{\cT^\circ(n)}(v_1) } = \set{\exists i> K: |\desc{(\cT^\circ(n),v_1)}{v_i}| \ge |\desc{(\cT^\circ(n),v_i)}{v_1}|}. \]
Thus by the union bound and Lemma \ref{lem:909},  for any $K\ge K_M$, 
\begin{align}
	&\pr({v_1 \notin H_{K,\Psi}(\cT^\circ(n))})\notag\\
	&\leq \pr\left(\set{\exists i> K: |\desc{(\cT^\circ(n),v_1)}{v_i}| \ge |\desc{(\cT^\circ(n),v_i)}{v_1}|} \cap (\cS_K^{M})^c \right) + \pr(\cS_K^M), \notag\\
	&\leq \pr(\cS_K^M)\notag\\
	&\qquad + \sum_{i=K+1}^n \pr(|\desc{(\cT^\circ(n),v_1)}{v_i}| \ge |\desc{(\cT^\circ(n),v_i)}{v_1}|, |\desc{(\cT^\circ(i),v_i)}{v_1}| \ge M\log{i/2}).  \label{eqn:945}
\end{align}
Consider any $M>0$. Fix $K \ge K_M$ and $i\geq K$. We will analyze the contribution of the $i$-th term in the summand above. 
\begin{lemma}\label{lem:term-i}
	Consider the setting above. For any $M>0$, fix $K \ge K_M$ and $i\geq K$.  There exist constants $C_1^{\star}, C_2^{\star} >0$ independent of $M$ such that 
	\[\limsup_{n\to\infty} \pr\bigg(|\desc{(\cT^\circ(n),v_1)}{v_i}| \ge |\desc{(\cT^\circ(n),v_i)}{v_1}|, |\desc{(\cT^\circ(i),v_i)}{v_1}| \ge M\log{i/2}\bigg) \leq \frac{C_1^{\star}}{(i/2)^{C_2^{\star} M}}. \]
\end{lemma}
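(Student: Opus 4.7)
The plan is to exploit the continuous time embedding $\cT^\circ(n) = \widetilde\BP_f(\eta_n^{\sss(1,2)})$ of Lemma \ref{lem:ctb-embedding-no-cp} and decompose the tree by removing the edge between $v_1$ and its child $c_1$ on the $(v_1,v_i)$-path. This splits $\cT^\circ(n)$ into two components: $A^{(n)}$ containing $v_1$, which satisfies $|A^{(n)}| = |\desc{(\cT^\circ(n), v_i)}{v_1}|$; and the $c_1$-rooted subtree, inside which the subtree rooted at $v_i$ has size $|B^{(n)}| := |\desc{(\cT^\circ(n), v_1)}{v_i}|$. Writing $\tau_i := \eta_i^{\sss(1,2)}$ for the continuous-time arrival of $v_i$, the condition $|\desc{(\cT^\circ(i), v_i)}{v_1}| \ge M\log(i/2)$ becomes $|A^{(\tau_i)}| \ge M\log(i/2)$. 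In continuous time, $B^{(t)}$ evolves from $\tau_i$ onwards as a fresh CTBP rooted at $v_i$, while each $u \in A^{(\tau_i)}$ keeps producing descendants that remain in $A$ (since $c_1$'s subtree is closed off from $A$). By Lemma \ref{lem:lower-bd}(a), the future progeny tree rooted at each $u$ can be coupled to stochastically dominate an independent copy $\LBP_f^{(u)}(\cdot - \tau_i)$ of the lower bounding branching process; by the independence of the fresh future offspring point processes of distinct CTBP individuals, these copies together with the CTBP at $v_i$ are mutually independent.

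Select any $U \subset A^{(\tau_i)}$ with $|U| = m := \lfloor M\log(i/2)\rfloor$, giving the deterministic inequality
\[|A^{(t)}| \ge \sum_{u \in U}\bigl|\LBP_f^{(u)}(t - \tau_i)\bigr|, \qquad t \ge \tau_i.\]
Normalize both sides of $|B^{(n)}| \ge |A^{(n)}|$ by $e^{-\lambda^*(t_n - \tau_i)}$ with $t_n := \eta_n^{\sss(1,2)} \to \infty$. Theorem \ref{prop:convg-limit} gives $e^{-\lambda^*(t_n-\tau_i)}|B^{(n)}| \to W_\infty^{v_i} \stackrel{d}{=} W_\infty$ almost surely, and Lemma \ref{lem:lower-bd}(b) gives $e^{-\lambda^*(t_n-\tau_i)}|\LBP_f^{(u)}(t_n-\tau_i)| \to \LW^{(u)}$ almost surely, with $(\LW^{(u)})_{u \in U}$ iid and strictly positive. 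Reverse Fatou then yields
\[\limsup_{n \to \infty}\pr\Bigl(|B^{(n)}| \ge |A^{(n)}|,\ |A^{(\tau_i)}| \ge M\log(i/2)\Bigr) \le \pr\Bigl(W_\infty \ge \sum_{j=1}^m \LW^{(j)}\Bigr),\]
where $W_\infty$ and $(\LW^{(j)})_{j \le m}$ are mutually independent with the respective limit laws.

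The final step is a Chernoff bound. Pick $\theta > 0$ small enough that $\E[e^{\theta W_\infty}] < \infty$ (possible by Theorem \ref{thm:exponential-mom}); since $\LW > 0$ almost surely by Lemma \ref{lem:lower-bd}(b), $\E[e^{-\theta \LW}] < 1$, so $c := -\log\E[e^{-\theta\LW}] > 0$. Then
\[\pr\Bigl(W_\infty \ge \sum_{j=1}^m \LW^{(j)}\Bigr) \le \E[e^{\theta W_\infty}] \bigl(\E[e^{-\theta\LW}]\bigr)^m \le \frac{e^c\,\E[e^{\theta W_\infty}]}{(i/2)^{cM}},\]
using $m \ge M\log(i/2) - 1$. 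The constants $C_1^\star := e^c\,\E[e^{\theta W_\infty}]$ and $C_2^\star := c$ depend only on the law of the CTBP and not on $M$, as required. The main delicacy lies in the coupling used to produce the mutually independent $\LBP_f^{(u)}$ processes; this uses in an essential way that, in the continuous time embedding, the future offspring process of each individual is independent of the past and of the future offspring processes of other individuals, which is precisely what renders the resulting sum of iid quantities amenable to the Chernoff estimate.
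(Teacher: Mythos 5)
Your proposal is correct and follows essentially the same route as the paper: the same continuous-time decomposition into the fresh CTBP rooted at $v_i$ versus the forest of processes hanging off $\desc{(\cT^\circ(i),v_i)}{v_1}$, the same lower-bounding of the latter by $\lfloor M\log(i/2)\rfloor$ i.i.d.\ copies of $\LBP_f$, and the same Fatou passage to $\pr\bigl(W_\infty \ge \sum_{j=1}^m \LW^{(j)}\bigr)$. The only difference is cosmetic: you finish with a single Chernoff bound $\E[e^{\theta W_\infty}](\E[e^{-\theta\LW}])^m$, whereas the paper splits the event at the threshold $\tfrac{\mu^*}{2}M\log(i/2)$ and bounds the two tails separately; both rest on Theorem \ref{thm:exponential-mom} and the a.s.\ positivity of $\LW$ from Lemma \ref{lem:lower-bd}(b).
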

\begin{proof}
	
We will switch back to continuous time to understand the evolution of $\desc{(\cT^\circ(\cdot),v_1)}{v_i}$ and $\desc{(\cT^\circ(\cdot),v_i)}{v_1}$. Note that, conditional on $\cT^\circ(i) = \widetilde{\BP}_f(\eta_i^{\sss(1,2)})$, these two processes evolve (conditionally) independently as follows: 

\begin{enumerate}
	\item the process $\desc{(\cT^\circ(\cdot),v_1)}{v_i}$ evolves in continuous time as a continuous time branching process $\BP_f$ originating at vertex $v_i$ as in Def. \ref{def:ctbp}. We will denote this process by $\set{\cT^{\sss(i)}(\cdot):t\geq 0}$, with $\cT^{\sss(i)}(0) = \desc{(\cT^\circ(i),v_1)}{v_i}$ (tree with only one vertex $v_i$). Write for $t\geq 0$, $\cY^{\sss(1)}(t) := |\cT^{\sss(i)}(t)|$. 
	\item the process $\desc{(\cT^\circ(\cdot),v_i)}{v_1}$ evolves in continuous time as follows: for each vertex $v \in \desc{(\cT^\circ(i),v_i)}{v_1} $ start a branching process where if vertex $v$ has degree $k\geq 1$ in $\desc{(\cT^\circ(i),v_i)}{v_1}$ then the root of this branching process has offspring point process $\xi_f^{\sss(k)}$ as in \eqref{eqn:xi-k-f-t-def} whilst all ensuing individuals have the original offspring point process $\xi_f$. In the notation of Lemma \ref{lem:lower-bd}, we start a branching process with distribution $\BP^{\sss(k)}_f$ below vertex $v$.  We will let $\cT^{i\to 1}(\cdot)$ denote this (forest valued) process comprising branching processes originating from each vertex in $\desc{(\cT^\circ(i),v_i)}{v_1}$. For $t\geq 0$, let $\cY^{\sss(2)}(t) := |\cT^{i\to 1}(t)|$ denote the size of this process. 
\end{enumerate} 
Now recall the lower bounding process from Lemma \ref{lem:lower-bd} and let $\set{\LZF^{\sss(j)}(\cdot):j\geq 1}$ be a sequence of \emph{i.i.d.} processes with distribution $\LZF(\cdot)$ as in the lemma. On the set $\set{|\desc{(\cT^\circ(i),v_i)}{v_1}| \ge M\log{i/2}}$, 
\begin{equation}
\label{eqn:1013}
	\sum_{j=1}^{\lfloor M\log(i/2) \rfloor}\LZF^{\sss(j)}(\cdot)\stod \cY^{\sss(2)}(\cdot).  
\end{equation}
Assuming these processes are constructed on the same probability space as $\set{\cY^{\sss(j)}(\cdot): j=1,2}$, we get for fixed $i$, 
\begin{align}
	\limsup_{n\to\infty}\pr\bigg(|\desc{(\cT^\circ(n),v_1)}{v_i}| &\ge |\desc{(\cT^\circ(n),v_i)}{v_1}|, |\desc{(\cT^\circ(i),v_i)}{v_1}| \ge M\log{i/2}\bigg)\notag\\
	&\leq \limsup_{t\to\infty}\pr\bigg(e^{-\lambda t} \cY^{\sss(1)}(t) \ge e^{-\lambda t} \sum_{j=1}^{\lfloor M\log(i/2) \rfloor} \LZF^{\sss(j)}(t)\bigg)\notag\\
 &\leq \pr\bigg( \limsup_{t\to\infty} e^{-\lambda t} \cY^{\sss(1)}(t) \geq \liminf_{t\to\infty} e^{-\lambda t} \sum_{j=1}^{\lfloor M\log(i/2) \rfloor} \LZF^{\sss(j)}(t)\bigg) \notag\\
 &\leq \pr\bigg(W_\infty \geq \sum_{j=1}^{\lfloor M\log{(i/2) \rfloor}} \LW^{\sss(j)}\bigg), \label{eqn:1035}
\end{align}
where $W_\infty$ has distribution as in Theorem \ref{prop:convg-limit} and independent of the \emph{i.i.d} sequence $\set{\LW^{\sss(j)}:j\geq 1}$, which have distribution $\LW$ as in Lemma \ref{lem:lower-bd}. The second inequality above follows by Fatou's Lemma.

By Lemma \ref{lem:lower-bd}, $\LW$ is a sub-exponential random variable with $\E(\LW) = \mu^* > 0$. Thus, there exist $\tilde{C}_1, \tilde{C}_2 > 0$ (independent of $M$) such that, 
\begin{equation}
\label{eqn:1047}
	\pr\bigg(\sum_{j=1}^{\lfloor M\log(i/2) \rfloor} \LW^{\sss(j)} \leq \frac{\mu^*}{2} M \log{(i/2)}\bigg) \leq \frac{\tilde{C}_1}{(i/2)^{\tilde{C}_2 M}}. 
\end{equation}
Similarly since $W_\infty$ has exponential tails by Theorem \ref{thm:exponential-mom}, there exist $C_1^\prime, C_2^\prime$  (independent of $M$) such that 
\begin{equation}
\label{eqn:1052}
	\pr\bigg(W_\infty \geq \frac{\mu^*}{2} M\log{i/2}\bigg) \leq \frac{C_1^\prime}{(i/2)^{C_2^\prime M}}.
\end{equation}
Combining \eqref{eqn:1047} and \eqref{eqn:1052} and using this in \eqref{eqn:1035} completes the proof of the lemma. 
\end{proof}

\noindent {\bf Completing the Proof of Theorem \ref{thm:upp-root-find}(a):} Consider \eqref{eqn:945}. Using Lemma \ref{lem:no-too-small} for the first term and Lemma \ref{lem:term-i} for the second sum, we get that there exist positive constants $C_1^{\star}, C_2^{\star}, C_{11}$ independent of $M$ such that for any $K \ge K_M$ (with $D, B$ as in Lemma \ref{lem:no-too-small}), 
\begin{align}
	\limsup_{n\to\infty} \pr({v_1 \notin H_{K,\Psi}(\cT^\circ(n))}) &\leq \sum_{i=K+1}^{\infty} \frac{C_1^{\star}}{(i/2)^{C_2^{\star} M}} + 2\frac{De^{2\sqrt{(B+M)\log{(\lfloor K/2 \rfloor + 1)}}}}{(\lfloor K/2 \rfloor + 1)^{f_*/(2\overline{C}_f+\beta)}} \notag\\
	& \leq \frac{C_{11}}{(K/2)^{C_2^{\star}M-1}} + 2\frac{De^{2\sqrt{(B+M)\log{(\lfloor K/2 \rfloor + 1)}}}}{(\lfloor K/2 \rfloor + 1)^{f_*/(2\overline{C}_f+\beta)}}.  \notag
\end{align}
Choose $M = M_0$ sufficiently large such that for all $K \ge 4$,
$$
\frac{C_{11}}{(K/2)^{C_2^{\star}M_0-1}} \le \frac{D}{(\lfloor K/2 \rfloor + 1)^{f_*/(2\overline{C}_f+\beta)}}.
$$
Fixing this value of $M=M_0$, we get that there exist positive constants $C_1''$ and $C_2''$ such that 
\begin{equation}
\label{eqn:1112}
	\limsup_{n\to\infty} \pr({v_1 \notin H_{K,\Psi}(\cT^\circ(n))}) \leq 3\frac{De^{2\sqrt{(B+M_0)\log{(\lfloor K/2 \rfloor + 1)}}}}{(\lfloor K/2 \rfloor + 1)^{f_*/(2\overline{C}_f+\beta)}} \leq \frac{C_1'' e^{\sqrt{C_2'' \log{K}}}}{K^{f_*/(2\overline{C}_f + \beta)}},
\end{equation}
for all $K\geq K_{M_0}$. Choosing $K = \frac{C_1}{\eps^{(2\overline{C}_f+\beta)/f_{*}}}\exp(\sqrt{C_2\log{1/\eps}})$ (the bound in part (a) of the theorem) for appropriately chosen positive constants $C_1, C_2$ implies that 
\[\limsup_{n\to\infty}\pr({v_1 \notin H_{K,\Psi}(\cT^\circ(n))}) \leq \eps. \]
This completes the proof of part (a). 

Part (b) of the theorem follows similarly, except that now we use $\yu_{f^*}$ to bound $\BP^{(1\downarrow)}$ from above in Lemma \ref{lem:no-too-small} instead of $\LPA_{\overline{C}_f, \beta}$. We omit the details.

\subsection{Proof of Theorem \ref{thm:low-root-find}:}
We will need two lemmas to prove the theorem. Recall that $\cT_f(K)^{\circ}$ denotes the tree $\cT_f(K)$ with all labels and root information removed.  Define for $K\geq 2$, the event
\begin{equation}
\label{eqn:leaf-event-def}
	\cA_K := \set{v_1 \mbox{ is a leaf in } \cT_f(K)}.
\end{equation}
In other words, in the evolution of the process $\set{\cT_f(j):2\leq j\leq K}$, after $v_2$, all subsequent vertices attach to $v_2$ or its descendants. For $0< \gamma <1$, recall that $H_{\lfloor \gamma K \rfloor, \Psi}(\cT_f(n)^{\circ})$ denotes the $\lfloor \gamma K \rfloor$ vertices in $\cT_f(n)$ with least $\Psi$ values. 

\begin{prop}\label{prop:leaf-lower}
	Under Assumptions \ref{ass:attach-func} and \ref{ass:lim-sup}, $\exists$ $c>0$ such that $\forall \ K\geq 2$, 
	\[\liminf_{n\to\infty} \pr(v_1 \notin H_{\lfloor K/4 \rfloor, \Psi}(\cT_f(n)^{\circ})) \geq c \pr(\cA_K).\]
\end{prop}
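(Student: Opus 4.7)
The plan is to work in the CTBP embedding of Lemma~\ref{lem:ctb-embedding-no-cp}, writing $\tau_K := \eta_K^{\sss(1,2)}$ so that $\cT_f(K) = \widetilde{\BP}_f(\tau_K)$, and denoting by $T_1(n), T_2(n)$ the two components of $\cT_f(n)$ obtained by deleting the edge $\{v_1, v_2\}$. Under this embedding the event $\cA_K$ is $\{|\BP_f^{\sss(1 \downarrow)}(\tau_K)| = 1\}$; by memorylessness of the exponential clocks, on $\cA_K$ the evolution of $T_1$ after $\tau_K$ is an independent fresh CTBP started from $v_1$, and since the discrete shape of a CTBP at a given size is independent of the stopping time, the shape of $T_2(\tau_K) = \BP_f^{\sss(2 \downarrow)}(\tau_K)$ conditional on $\cA_K$ agrees in law with $\BP_f^{\sss(2 \downarrow)}$ stopped at size $K-1$. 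The central reduction uses Lemma~\ref{lem:jord-cent-det}(a): for $v_j$ in $T_2(n)$, writing $N_j(n) := |\desc{(T_2(n), v_2)}{v_j}|$, one has $|\desc{(\cT^\circ(n),v_j)}{v_1}| = |T_1(n)|$ and $|\desc{(\cT^\circ(n),v_1)}{v_j}| = N_j(n)$, so $\Psi_{\cT^\circ(n)}(v_j) < \Psi_{\cT^\circ(n)}(v_1)$ iff $N_j(n) > |T_1(n)|$. Letting $t \to \infty$ and invoking Theorem~\ref{prop:convg-limit}, one gets a.s.\ $e^{-\lambda(t-\tau_K)}|T_1(t)| \to W_\infty^{\sss(1)}$, a fresh $W_\infty$ independent of the $T_2$-side, and $e^{-\lambda(t-\tau_K)}N_j(t) \to \widetilde W_j$; for each leaf $\ell$ of $T_2(\tau_K)$, $\widetilde W_\ell$ is itself a fresh $W_\infty$ (since $\ell$'s subtree after $\tau_K$ evolves as an independent fresh CTBP started from a single vertex), and these leaf-limits are jointly i.i.d.\ and independent of $W_\infty^{\sss(1)}$. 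The proposition thus reduces to finding $c > 0$, uniform in $K$, such that for all $K \ge 2$,
\[
\pr\!\left(\#\{v_j \in T_2(\tau_K) : \widetilde W_j > W_\infty^{\sss(1)}\} \ge \lfloor K/4 \rfloor \,\big|\, \cA_K\right) \ge c.
\]

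The core argument is an exchangeability step at the leaves of $T_2(\tau_K)$. Let $L_K$ denote the leaf count. Given $L_K$, the $L_K + 1$ variables $\{\widetilde W_\ell : \ell \text{ leaf}\} \cup \{W_\infty^{\sss(1)}\}$ are i.i.d.\ copies of $W_\infty$, so by symmetry the rank of $W_\infty^{\sss(1)}$ among them is uniform on $\{1,\ldots,L_K + 1\}$; the conditional probability that at least $\lfloor K/4 \rfloor$ leaves exceed $W_\infty^{\sss(1)}$ is therefore $\max(L_K + 1 - \lfloor K/4 \rfloor, 0)/(L_K + 1)$. Theorem~\ref{thm:degree-dist-convg} provides the a.s.\ convergence $L_K/K \to p_1 := \lambda/(\lambda + f(1)) > 0$. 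In the regime $p_1 > 1/4$, combining this with a tail bound (in the spirit of the proof of Theorem~\ref{thm:degree-dist-convg}) and direct verification for small $K$ yields constants $c_0 \in (1/4, p_1)$ and $c_1 > 0$ with $\pr(L_K \ge c_0 K \mid \cA_K) \ge c_1$ uniformly in $K$, which via the exchangeability bound produces the required $c$.

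The principal obstacle is the regime $p_1 \le 1/4$, where leaves alone do not suffice. In this case I would use the decomposition $\widetilde W_u = \sum_{v \in \mathrm{subtree}(u,\tau_K)} R_v$, with $\{R_v\}_{v \in T_2(\tau_K)}$ independent (corresponding to disjoint families of new children born after $\tau_K$) and $R_v$ a fresh $W_\infty$ when $v$ is a leaf. This forces the monotonicity $\widetilde W_u \ge \widetilde W_v$ for any descendant $v$ of $u$, so the good set $S := \{v_j \in T_2(\tau_K) : \widetilde W_{v_j} > W_\infty^{\sss(1)}\}$ is upward-closed in $T_2(\tau_K)$; each good leaf $\ell$ then contributes $1 + \mathrm{depth}(\ell)$ vertices (itself and its ancestors) to $|S|$. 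Combining leaf-level exchangeability with CTBP-based lower bounds on the typical depth profile of $T_2(\tau_K)$ would then produce the required uniform $c$.
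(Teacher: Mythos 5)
Your reduction via Lemma \ref{lem:jord-cent-det}(a) and the passage to the continuous-time limits is sound, and the exchangeability observation at the leaves is correct as far as it goes. But the argument has a genuine gap: it only closes when $p_1 = \lambda^*/(\lambda^*+f(1)) > 1/4$, and this can fail under the paper's assumptions. For instance, take $f(1)=a$ and $f(k)=1$ for $k \ge 2$; then $\hat\rho(\lambda) = \frac{a}{\lambda+a}\cdot\frac{\lambda+1}{\lambda}$, so $\lambda^* = \sqrt{a}$ and $p_1 = 1/(1+\sqrt{a})$, which drops below $1/4$ once $a>9$. In that regime even the event that \emph{every} leaf beats $v_1$ produces only about $p_1 K < K/4$ good vertices among the leaves, so the leaf count alone cannot supply $\lfloor K/4\rfloor$ vertices; and your proposed rescue -- counting ancestors of good leaves via upward-closedness of the good set -- is only a sketch: ancestor paths of distinct good leaves overlap, typical depths in these trees are $O(\log K)$, and you give no argument that the union of these paths has size at least $K/4$ with probability bounded away from $0$ uniformly in $K$. (A secondary, fixable issue: even in the favorable regime, the uniform-in-$K$ bound $\pr(L_K \ge c_0 K \mid \cA_K) \ge c_1$ requires a quantitative concentration estimate for the leaf count; Theorem \ref{thm:degree-dist-convg} as stated only gives almost sure convergence.)

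The idea you are missing is that one does not need the competitors of $W_\infty^{\sss(1)}$ to be exchangeable with it; one only needs each of the $K-1$ vertices $v_2,\dots,v_K$ to beat a \emph{fixed deterministic threshold} with probability at least $1/2$. The paper attaches to every $v_i$, $2\le i\le K$ (leaf or not), the lower-bounding branching process $\LBP_{f,i}$ of Lemma \ref{lem:lower-bd}, whose normalized liminf $\LW^{\sss(i)}$ is i.i.d.\ across $i$, strictly positive almost surely, and independent of $W_\infty^{\sss(1)}$. Choosing $w$ with $\pr(\LW>w)\ge 1/2$, Azuma--Hoeffding gives at least $K/4$ indices with $\LW^{\sss(i)}>w$ except with probability $e^{-c'K}$, while $\pr(W_\infty^{\sss(1)}\le w)>0$ by the full support of $W_\infty$ (Theorem \ref{prop:convg-limit}); multiplying these two independent events yields a constant $c$ uniform in $K$ that does not depend on where $p_1$ sits relative to $1/4$. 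To salvage your route you would need to replace the leaf-exchangeability step by a threshold comparison of this type valid for all vertices of $T_2(\tau_K)$, at which point you have essentially reproduced the paper's argument.
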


\begin{proof}
	For $2\leq i\leq K$, let $\tilde \cT^{\sss(i)}(n) = \desc{(\cT^\circ(n),v_1)}{v_i}$.  For $i=1$, let $\tilde \cT^{\sss(1)}(n) = \desc{(\cT^\circ(n), v_2)}{v_1} = \cT_f(n) \setminus \desc{(\cT^\circ(n), v_1)}{v_2}$, i.e. we remove $v_2$ and all of it's descendants from $\cT_f(n)$ and consider the resulting tree, rooted at $v_1$. 
	 For $n \ge K$, define the two events, 
	\[\cB_{n, \lfloor K/4 \rfloor}:= \set{\Psi_{\cT^\circ(n)}(v_i) < \Psi_{\cT^\circ(n)}(v_1) \text{ for at least } \lfloor  K/4 \rfloor~ v_i's \text{ in } \set{2\leq i\leq K}}, \]
	and 
	\[\tilde{\cB}_{n, \lfloor K/4 \rfloor}:= \set{|\tilde\cT^{\sss(i)}(n)| > |\tilde\cT^{\sss(1)}(n)| \text{ for at least } \lfloor K/4 \rfloor~ v_i's \text{ in } \set{2\leq i\leq K}}.\]
	First note that for any $n \ge K$,
	\begin{align*}
	\pr(v_1 \notin H_{\lfloor K/4 \rfloor, \Psi}(\cT^\circ(n))) &\geq \pr(v_1\notin H_{\lfloor K/4 \rfloor, \Psi}(\cT^\circ(n)) \cap \cA_K )\\
	&\geq  \pr(\cB_{n, \lfloor K/4 \rfloor} \cap \cA_K) = \pr( \tilde{\cB}_{n, \lfloor K/4 \rfloor} \cap \cA_K).
	\end{align*}
	since by Lemma \ref{lem:jord-cent-det} \eqref{it:jord-nesc-suff}, $\cA_K \cap \cB_{n, \lfloor  K/4 \rfloor} = \cA_K \cap \tilde{\cB}_{n, \lfloor K/4 \rfloor}$. Define, 
\begin{align*}
\tilde\cB_{\infty, \lfloor K/4 \rfloor} &:= \liminf_{n \rightarrow \infty} \tilde{\cB}_{n, \lfloor K/4 \rfloor}\\
&= \set{|\tilde \cT^{\sss(i)}(n)| > |\tilde \cT^{\sss(1)}(n)| \text{ for at least } \lfloor K/4 \rfloor~ v_i's \text{ in } \set{2\leq i\leq K} \text { for all $n$ large}}.
\end{align*}
	Thus by Fatou's Lemma,
	\begin{equation}
	\label{eqn:541}
	\liminf_{n\to\infty} \pr(v_1 \notin H_{\lfloor K/4 \rfloor, \Psi}(\cT^\circ(n))) \geq 	\pr(\tilde\cB_{\infty, \lfloor K/4 \rfloor}|\cA_K) \pr(\cA_K).
	\end{equation}
	On the set $\cA_K$, conditional on $\cT_f(K)$  evolve the process in continuous time starting from $\cT_f(K)$. Precisely: each vertex $v\in \cT_f(K)$ with degree $\ell \geq 1$ starts a branching process with distribution $\BP_f^{\sss(\ell)}(\cdot)$ as in the setting of Lemma \ref{lem:lower-bd} where the root $v\in \cT_f(K)$ uses point process $\xi_f^{\sss(\ell)}$ for it's offspring,  while other individuals use the original point process $\xi_f$ (conditionally independent across vertices given $\cT_f(K)$). For $1 \le i \le K$, we denote by $\BP_{f,i}(\cdot)$ the branching process with root $v_i$. Thus we now have $K$ (conditionally independent) branching processes $\{\BP_{f,i}(\cdot): 1 \le i \le K\}$. 
	
	As in the setting of Lemma \ref{lem:lower-bd}, for $2\leq i\leq K$, construct $K-1$ corresponding lower bound branching processes $\{\LBP_{f,i}(\cdot): 2\leq i \leq K\}$ and let $\LW^{\sss(i)} = \liminf_{t\to\infty} e^{-\lambda t} |\LBP_{f,i}(t)|$. Note that, conditional on $\cA_K$, $\BP_{f,1}(\cdot)$ has the same distribution as the usual branching process $\BP_f(\cdot)$. Thus by Theorem \ref{prop:convg-limit}, $W_\infty^{\sss(1)} := \limsup_{t\to\infty} e^{-\lambda t} |\BP_{f,1}(t)|$ has the same distribution as $W_\infty$ in Theorem \ref{prop:convg-limit}. 
	
	For the rest of the proof, we proceed conditional on $\cA_K$. Note that $\{\LW^{\sss(i)}: 2\leq i\leq K\}$ is a collection of $K-1$ independent and identically distributed random variables, and also independent of $W_{\infty}^{\sss(1)}$. By Lemma \ref{lem:lower-bd} (b), $\LW^{\sss(2)}>0$ almost surely, and hence we can pick $w>0$ such that $\pr(\LW^{\sss(2)} > w) \ge 1/2$. For $2\leq i\leq K$, let $\hat{X}_i = \ind\set{\LW^{\sss(i)} > W_\infty^{\sss(1)}}$. Observe that
	\begin{align}\label{missing}
	\pr\left(\sum_{i=2}^K \hat{X}_i \geq K/4\right) &\ge \pr\left(\sum_{i=2}^K \hat{X}_i \geq K/4, \, W_{\infty}^{\sss(1)} \le w\right)\nonumber\\
	&\ge \pr\left(\sum_{i=2}^K \ind\set{\LW^{\sss(i)} > w} \geq K/4, \, W_{\infty}^{\sss(1)} \le w\right)\nonumber\\
	&=\pr\left(\sum_{i=2}^K \ind\set{\LW^{\sss(i)} > w} \geq K/4\right)\pr\left(W_{\infty}^{\sss(1)} \le w\right).
	\end{align}
The random variables $\{\ind\set{\LW^{\sss(i)} > w}: 2\leq i\leq K\}$ are a collection of independent Bernoulli random variables with success probability $\pr(\LW^{\sss(2)} > w) \ge 1/2$. Therefore, by Azuma-Hoeffding inequality, $\exists \ c'>0$ independent of $K$ such that  
	\[\pr\left(\sum_{i=2}^K \ind\set{\LW^{\sss(i)} > w} \geq K/4\right) \geq 1-e^{-c'K}.\]	
Moreover, since $W_{\infty}^{\sss(1)}$ has full support on $(0,\infty)$ by Theorem \ref{prop:convg-limit}, $\pr\left(W_{\infty}^{\sss(1)} \le w\right)>0$. Hence, by \eqref{missing}, we obtain $c>0$ such that $\forall \ K\geq 2$,
        \begin{align*}
		&\pr(\tilde \cB_{\infty, \lfloor K/4 \rfloor}|\cA_K)\\
		&\geq \pr(\exists~ \lfloor K/4 \rfloor ~ i's\in \set{2,\ldots, K}, \liminf_{t\to\infty} e^{-\lambda t}|\BP^{\sss(i)}(t)| > \limsup_{t\to\infty} e^{-\lambda t}|\BP^{\sss(1)}(t)||\cA_k)\\
		&= \pr(\exists~ \lfloor K/4 \rfloor ~ i's\in \set{2,\ldots, K}, \LW^{\sss(i)} > W_\infty^{\sss(1)} )\\
		&\ge \pr\left(\sum_{i=2}^K \hat{X}_i \geq K/4\right)  \ge c.
	\end{align*}
	
The proposition follows upon using the above lower bound in \eqref{eqn:541}. 
\end{proof}

\begin{prop}
	\label{prop:ak-lower}
	For $f$ satisfying Assumptions \ref{ass:attach-func} and \ref{ass:lim-sup}:  
	\begin{enumeratea}
		\item  Suppose $f(i)\geq \underline{C}_f i +\beta$ $\forall~i\geq 1$. Then $\exists \ C>0$ such that for all $K\geq 2$, $\pr(\cA_K)\geq C/K^{f(1)/(2\underline{C}_f+\beta)}$. 
		\item For general $f$ with $f_* = \inf_{i\geq 1} f(i)$,  $\exists \ C>0$ such that for all $K\geq 2$, $\pr(\cA_K) \geq C/K^{f(1)/f_*} $. 
	\end{enumeratea}
	
\end{prop}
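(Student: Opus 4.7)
The plan is to exploit the explicit one-step attachment dynamics: on the event $\cA_K$, the root $v_1$ never acquires a new neighbor, so its degree stays equal to $1$ throughout. Let $\cF_{j-1}$ denote the $\sigma$-algebra generated by $\cT_f(2),\ldots,\cT_f(j-1)$, and set $S_{j-1}:=\sum_{v\ne v_1,\,v\in \cT_f(j-1)} f(\deg(v))$. On $\cA_{j-1}$, the conditional probability that the newcomer $v_j$ does \emph{not} attach to $v_1$ equals $S_{j-1}/(f(1)+S_{j-1})$. Writing $\ind_{\cA_j}=\ind_{\cA_{j-1}}\,\ind_{\{v_j\not\to v_1\}}$ and iterating the tower property, I obtain
\begin{equation*}
\pr(\cA_K) \;=\; \E\!\left[\prod_{j=3}^K \frac{S_{j-1}}{f(1)+S_{j-1}}\,\ind_{\cA_{j-1}}\right].
\end{equation*}
Since $\pr(\cA_2)=1$ and $x\mapsto x/(f(1)+x)$ is increasing, a \emph{deterministic} pathwise lower bound on $S_{j-1}$ (valid on $\cA_{j-1}$) yields $\pr(\cA_K)\ge\prod_{j=3}^K L_j$ for the corresponding $L_j$.

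Next I construct these deterministic lower bounds using that, on $\cA_{j-1}$, the $j-2$ vertices other than $v_1$ in $\cT_f(j-1)$ have total degree $2(j-2)-1=2j-5$ (since $\cT_f(j-1)$ has $j-2$ edges and $\deg(v_1)=1$). For part (a), the hypothesis $f(i)\ge \underline C_f\,i+\beta$ for every $i\ge 1$ gives
\begin{equation*}
S_{j-1}\;\ge\;\underline C_f(2j-5)+\beta(j-2)\;=\;\alpha(j-2)-\underline C_f,\qquad \alpha:=2\underline C_f+\beta,
\end{equation*}
and hence $L_j=\bigl(\alpha(j-2)-\underline C_f\bigr)/\bigl(\alpha(j-2)-\underline C_f+f(1)\bigr)$. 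For part (b), the trivial bound $f(\deg(v))\ge f_*$ gives $S_{j-1}\ge (j-2)f_*$ and thus $L_j=(j-2)f_*/\bigl(f(1)+(j-2)f_*\bigr)$. Checking positivity is straightforward: at $j=3$ the numerator in (a) equals $\underline C_f+\beta>0$ because $\alpha>\underline C_f$.

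Finally, I evaluate the two products via Gamma-function asymptotics. For part (a), writing $\gamma:=-\underline C_f/\alpha$ and $\delta:=(f(1)-\underline C_f)/\alpha$,
\begin{equation*}
\prod_{j=3}^K L_j \;=\; \prod_{k=1}^{K-2} \frac{k+\gamma}{k+\delta}\;=\;\frac{\Gamma(K-1+\gamma)\,\Gamma(1+\delta)}{\Gamma(1+\gamma)\,\Gamma(K-1+\delta)}\;\sim\; C\,(K-1)^{\gamma-\delta}\;=\;C\,K^{-f(1)/(2\underline C_f+\beta)}.
\end{equation*}
For part (b), the analogous computation with ratio $k/(k+f(1)/f_*)$ gives $\prod_{j=3}^K L_j \sim C\,K^{-f(1)/f_*}$. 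Combining the resulting lower bounds with $\pr(\cA_K)\ge\prod_{j=3}^K L_j$ yields both inequalities of the proposition.

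There is no real obstacle here: the argument is essentially bookkeeping. The only point requiring care is ensuring that the pathwise lower bounds on $S_{j-1}$ are valid on the event $\cA_{j-1}$ (which is precisely why the factor $\ind_{\cA_{j-1}}$ must be carried along in the tower-property calculation), and that the shift parameter $\gamma$ in part (a) — though negative — still leaves $k+\gamma$ strictly positive for all $k\ge 1$, so that the Gamma-function representation is legitimate.
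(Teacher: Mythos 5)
Your proof is correct and takes essentially the same route as the paper: the same one-step recursion conditioning on the newcomer avoiding $v_1$, the same deterministic handshake-lemma lower bound on the total attachment weight, and an evaluation of the resulting product (you use Gamma-function asymptotics where the paper Taylor-expands the logarithm of each factor, which is an equivalent bookkeeping choice). No gaps.
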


\begin{proof}
	We will prove (a). Proof of (b) follows the exact same steps with suitable modification (described below). Recall that we start the attachment tree process $\cT_f(\cdot)$ at $n=2$ with two vertices connected by a single edge. Thus for $\cA_K$ to occur, all subsequent vertices $\{v_i : 3\leq i \leq K\}$ should {\bf not} attach to vertex $v_1$. Note that, conditioning on $\cT_f(K-1)$, we have  
	\[\pr(\cA_K) = \E\big(\ind\set{\cA_{K-1}}\pr\left(\cA_K|\cT_f(K-1)\right)\big) = \E\left[\ind\set{\cA_{K-1}} \left(1-\frac{f(1)}{\sum_{j=1}^{K-1} f(\text{deg}(v_j))}\right)\right].\]
	Note that by our assumption $\sum_{j=1}^{K-1} f(\text{deg}(v_j)) \geq \sum_{j=1}^{K-1} \left(\underline{C}_f(\text{deg}(v_j)) +\beta\right) = (2\underline{C}_f + \beta)(K-1) - \underline{C}_f$ (for part (b) one uses the lower bound $f_*(K-1)$ instead; all the subsequent steps below are identical). Let $i_0\geq 3$ be such that $\frac{f(1)}{(2\underline{C}_f+\beta)(i_0-1) - \underline{C}_f} \leq 1/2$. Assuming $K> i_0$ and using the above recursion repeatedly we get, writing $C_0 := \pr(\cA_{i_0})>0$,
	\begin{align}
	\label{eqn:ak-lb-recur}
		\pr(\cA_K) &\geq C_0\prod_{i=i_0}^{K-1}\left(1-\frac{f(1)}{(2\underline{C}_f+\beta)(i-1) - \underline{C}_f}\right)\notag\\
		&= C_0\exp\bigg(\sum_{i=i_0}^{K-1}\log{\left(1-\frac{f(1)}{(2\underline{C}_f+\beta)(i-1) - \underline{C}_f}\right)}\bigg). 
	\end{align}
	Using Taylor's expansion for $\log$, check that 
\begin{align*}
&\exp\bigg(\sum_{i=i_0}^{K-1}\log\left(1-\frac{f(1)}{(2\underline{C}_f+\beta)(i-1) - \underline{C}_f}\right)\bigg)\\
&\geq C\exp\left(-\frac{f(1)}{2\underline{C}_f+\beta}\sum_{i=i_0}^{K-1}\frac{1}{(i-1)-(\underline{C}_f/(2\underline{C}_f+\beta))}\right)
\geq C'\exp\left(-\frac{f(1)}{2\underline{C}_f+\beta} \log{K}\right),
\end{align*}
where $C>0, C'>0$ do not depend on $K$.
	Using this in \eqref{eqn:ak-lb-recur} completes the proof. 
\end{proof}

\noindent {\bf Proof of Theorem \ref{thm:low-root-find}:} We will prove part (a) of the theorem using Propositions \ref{prop:leaf-lower} and \ref{prop:ak-lower} (a). Part (b) of the theorem follows identically using Proposition \ref{prop:ak-lower} (b) instead. 

By Propositions \ref{prop:leaf-lower} and \ref{prop:ak-lower} (a)
\begin{equation}
\label{eqn:855}
	\liminf_{n\to\infty} \pr(v_1  \notin H_{\lfloor K/4 \rfloor, \Psi}(\cT_f(n)^{\circ})) \geq cC/K^{f(1)/(2\underline{C}_f+\beta)}.
\end{equation}
Thus letting $\tilde{K}(\eps) =  \lfloor \left(\frac{cC}{2\eps}\right)^{(2\underline{C}_f+\beta)/f(1)} \rfloor$ shows that for all $\eps$ small enough (to ensure $\tilde{K}(\eps) \ge 4$), 
\[\liminf_{n\to\infty} \pr(v_1 \notin H_{\lfloor \tilde{K}(\eps)/4\rfloor, \Psi}(\cT_f(n)^{\circ})) \geq 2\eps > \eps.\]
The theorem follows from this.  \qed

\section{Proofs: Persistence}
This section is dedicated to the proof of Theorem \ref{thm:persistence}. We will follow the same broad approach as in \cite{jog2016analysis,jog2018persistence} but replacing their case specific estimates by the general estimates for attachment trees derived in the previous sections.
For notational convenience, we will denote by $(v^*_{n,1}, v^*_{n,2}, \dots, v^*_{n,n})$ the vertices of the attachment tree $\cT_f(n)$ ordered in ascending order of their $\Psi$ values, with ties being broken arbitrarily. The following lemma shows that for any fixed integer $K \ge 2$, $\sup_{n \ge K}n^{-1}\Psi_{\cT_f(n)}(v^*_{n,K})$ is strictly less than one almost surely. 

\begin{lemma}\label{maxKcentral}
Suppose Assumptions \ref{ass:attach-func} and \ref{ass:lim-sup} hold. For any integer $K \ge 2$, almost surely,
$
\sup_{n \ge K}\frac{\Psi_{\cT_f(n)}(v^*_{n,K})}{n} < 1.
$
\end{lemma}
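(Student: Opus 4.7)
The plan is to exhibit, for the fixed $K \ge 2$, $K$ specific vertices each of whose $\Psi$-value, normalized by $n$, is uniformly bounded away from $1$. Since $v^*_{n,K}$ realizes the $K$-th smallest $\Psi$-value in $\cT_f(n)$, for any $K$ distinct vertices $u_1,\ldots,u_K$ we have $\Psi_{\cT_f(n)}(v^*_{n,K}) \le \max_{1\le j\le K} \Psi_{\cT_f(n)}(u_j)$. The natural choice is the first $K$ arrivals $v_1,\ldots,v_K$; the claim thus reduces to showing, for each $i \in \{1,\ldots,K\}$,
\[
\sup_{n \ge i}\frac{\Psi_{\cT_f(n)}(v_i)}{n} < 1 \quad \text{almost surely,}
\]
after which the intersection of $K$ full-measure events gives the lemma.

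Fix $i$, pass to the continuous-time embedding of Lemma \ref{lem:ctb-embedding-no-cp}, and set $\tau_n := \eta_n^{\sss(1,2)}$. The components of $\cT_f(n) \setminus \{v_i\}$ are naturally indexed by the neighbors of $v_i$ in the CTBP genealogy: for each child $c$ of $v_i$ in $\widetilde{\BP}_f$, the CTBP subtree rooted at $c$, together with a ``parent-side'' component which equals $\BP_f^{\sss(2\downarrow)}(\tau_n)$ for $i=1$, equals $\BP_f^{\sss(1\downarrow)}(\tau_n)$ for $i=2$, and for $i \ge 3$ equals $\cT_f(n)$ with the CTBP subtree rooted at $v_i$ removed. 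Applying Theorem \ref{prop:convg-limit} to $\widetilde{\BP}_f$ (which yields $e^{-\lambda \tau_n} n \to W$ a.s.\ with $W > 0$) and to each of the sub-CTBPs appearing here, the normalized size of every component converges almost surely to a strictly positive limit: call these $\gamma_{i,c}$ (one for each child $c$ of $v_i$) and $\rho_i$ (for the parent-side component), satisfying the balance $\rho_i + \sum_c \gamma_{i,c} = 1$. Crucially, the assumption $f_{*} > 0$ implies $v_i$ almost surely produces infinitely many children in $\widetilde{\BP}_f$, so $\{\gamma_{i,c}\}$ is an almost surely infinite family of strictly positive summable numbers; this forces $\sup_c \gamma_{i,c} < \sum_c \gamma_{i,c}$ strictly, whence $\max(\rho_i,\sup_c \gamma_{i,c}) < 1$ almost surely.

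To transfer this to $\Psi_{\cT_f(n)}(v_i)/n$, observe that this ratio is the maximum of the normalized component sizes at time $\tau_n$, taken over an $n$-dependent growing index set (since new children of $v_i$ keep being born as $n$ increases). A standard truncation argument handles the growing index set: for any fixed $J$, the maximum over children of $v_i$ born after the $J$-th is dominated by their collective normalized mass, which converges to $\sum_{c > J} \gamma_{i,c}$ and can be made arbitrarily small by summability. It follows that $\Psi_{\cT_f(n)}(v_i)/n \to \max(\rho_i, \sup_c \gamma_{i,c}) =: L_i < 1$ almost surely. Combined with the trivial pointwise bound $\Psi_{\cT_f(n)}(v_i)/n \le (n-1)/n < 1$ valid for every individual $n \ge i$, this yields $\sup_{n \ge i}\Psi_{\cT_f(n)}(v_i)/n < 1$ almost surely, completing the argument. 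The principal obstacle is this final limit identification together with verifying $\sup_c \gamma_{i,c} < \sum_c \gamma_{i,c}$ strictly; both rest on producing an almost surely infinite family of strictly positive normalized subtree limits, which is precisely where the positivity assumption $f_{*}>0$ (and the strict positivity of $W_\infty$ from Theorem \ref{prop:convg-limit}) plays its role.
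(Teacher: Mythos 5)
Your argument is correct, and it takes a genuinely different route from the paper's. Both proofs share the opening reduction $\Psi_{\cT_f(n)}(v^*_{n,K}) \le \max_{1\le i\le K}\Psi_{\cT_f(n)}(v_i)$ and both ultimately rest on the continuous-time embedding together with strict positivity and finiteness of the normalized limits (Theorem \ref{prop:convg-limit} and Lemma \ref{lem:lower-bd}(b)). The difference is in the decomposition. The paper cuts the edges \emph{among} $v_1,\dots,v_K$ to obtain a \emph{fixed, finite} forest of $K$ pieces $T_{1,n},\dots,T_{K,n}$, invokes the deterministic inequality $\max_i \Psi_{\cT_f(n)}(v_i)/n \le 1-\min_i|T_{i,n}|/n$ from \cite[Lemma 4.1]{jog2018persistence}, and then only needs $\inf_{t\ge 0}e^{-\lambda t}|\BP_f^{(d_i)}(t)|>0$ and $\sup_{t\ge 0}e^{-\lambda t}|\BP_f^{(d_j)}(t)|<\infty$ for each of the $K$ conditionally independent processes --- no limit identification and no infinite index set. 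You instead decompose $\cT_f(n)\setminus\{v_i\}$ into the parent-side component and the subtrees of the (eventually infinitely many) children of $v_i$, identify the almost sure limit of every normalized component, and control the growing index set by a truncation over the tail children. This buys a self-contained argument (no appeal to the cited deterministic lemma) and in fact the stronger conclusion that $\Psi_{\cT_f(n)}(v_i)/n$ converges, at the cost of two extra verifications that you correctly flag: the almost sure (not merely distributional) additivity $\sum_c\gamma_{i,c}=\beta_i$ behind the balance $\rho_i+\sum_c\gamma_{i,c}=1$, and the strict inequality $\sup_c\gamma_{i,c}<\sum_c\gamma_{i,c}$. One small simplification available to you: since the parent-side limit $\rho_i$ is itself strictly positive (for $i\ge 3$ because $\beta_i<1$, and for $i=1,2$ because the other initiator's $W_\infty$ is positive), you already get $\sup_c\gamma_{i,c}\le\sum_c\gamma_{i,c}=1-\rho_i<1$ without invoking the infinitude of children; that observation is only needed to rule out the degenerate case $\rho_i=0$, which does not in fact occur.
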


\begin{proof}
Let $v_1,\dots,v_K$ denote the first $K$ vertices in the attachment tree. For $1 \le i \le K$ and $n \ge K$, denote by  $T_{i,n}$ the subtree of $\cT_f(n)$ with $v_i$ as the root in the forest formed by removing all the edges between $v_1,\dots, v_K$. Then it follows that for any $n \ge K$ (see proof of \cite[Lemma 4.1]{jog2018persistence}),
\begin{equation}\label{p1}
\frac{\Psi_{\cT_f(n)}(v^*_{n,K})}{n} \le \max_{1 \le i \le K} \frac{\Psi_{\cT_f(n)}(v_i)}{n} \le 1 - \min_{1 \le i \le K} \frac{|T_{i,n}|}{n}.
\end{equation}
For $1 \le i \le K$, let $d_i$ denote the degree of $v_i$ in $\cT_f(K)$. By using the continuous time embedding of the dynamics, for any $1 \le i \le K$,
\begin{equation}\label{p2}
\pr\left(\inf_{n \ge K}\frac{|T_{i,n}|}{n} >0 \ \big| \ \cT_f(K)\right) = \pr\left(\inf_{t \ge 0}\frac{|\BP_f^{(d_i)}(t)|}{\sum_{j=1}^K |\BP_f^{(d_j)}(t)|} > 0 \ \big| \ \cT_f(K)\right)
\end{equation}
where, conditionally on $\cT_f(K)$, $\{\BP_f^{(d_j)}(\cdot) : 1 \le i \le K\}$ are independent branching processes defined as in the setting of Lemma \ref{lem:lower-bd}. Note that for any $\ell \in \mathbb{N}_0$ and $t \ge 0$, denoting by $\sigma_{\ell}$ the birth time of the $\ell$-th child of the root, $|\BP_f^{(\ell)}(t)|$ has the same distribution as the number of individuals in the subpopulation of $\BP_f(\sigma_{\ell} + t)$ excluding the first $\ell$ children of the root and their descendants. Thus, for any $1 \le j \le K$, as $\sigma_j< \infty$ almost surely,
\begin{align}\label{p3}
\pr\left(\sup_{t \ge 0} e^{-\lambda t} |\BP^{(d_j)}_f(t)| < \infty \ \big| \ \cT_f(K)\right) &\ge \pr\left(\sup_{t \ge 0} e^{-\lambda t} |\BP_f(t + \sigma_j)| < \infty\right)\notag\\
&= \pr\left(\limsup_{t \rightarrow \infty} e^{-\lambda t} |\BP_f(t)| < \infty\right)\notag\\
&=\pr\left(W_{\infty} < \infty \right) = 1,
\end{align}
where the first equality is due to the fact that $\BP_f(\cdot)$ is non-explosive (which follows from Assumption \ref{ass:attach-func} (ii)), and the second equality is due to Theorem \ref{prop:convg-limit}. Recall the lower bounding process $\LBP_f$ in Lemma \ref{lem:lower-bd}. By Lemma \ref{lem:lower-bd} (b),
\begin{equation}\label{p4}
\pr\left(\inf_{t \ge 0} e^{-\lambda t} |\BP^{(d_i)}_f(t)| > 0 \ \big| \ \cT_f(K)\right) \ge \pr\left(\inf_{t \ge 0} e^{-\lambda t} |\LBP_f(t)| > 0\right) = \pr(\underline{W}_{\infty} >0) = 1,
\end{equation}
where the second-to-last equality holds because $|\LBP_f(t)|>0$ for all $t \ge 0$. 
Thus, using \eqref{p3} and \eqref{p4} in \eqref{p2}, for any $1 \le i \le K$,
\begin{multline*}
\pr\left(\inf_{n \ge K}\frac{|T_{i,n}|}{n} >0 \ \big| \ \cT_f(K)\right) = \pr\left(\inf_{t \ge 0}\frac{e^{-\lambda t}|\BP_f^{(d_i)}(t)|}{\sum_{j=1}^K e^{-\lambda t}|\BP_f^{(d_j)}(t)|} > 0 \ \big| \ \cT_f(K)\right)\\
\ge \pr\left(\inf_{t \ge 0} e^{-\lambda t} |\BP^{(d_i)}_f(t)| > 0 \text{ and } \sup_{t \ge 0} e^{-\lambda t} |\BP^{(d_j)}_f(t)| < \infty \text{ for all } 1 \le j \le K \ \big| \ \cT_f(K)\right) = 1.
\end{multline*}
The lemma follows from this and \eqref{p1}.
\end{proof}
Fix any integer $K \ge 2$. For $n \ge K$, let $\mathcal{K}_n$ denote the set of all vertices $v$ in $\cT_f(n)$ with $\Psi_{\cT_f(n)}(v) \le \Psi_{\cT_f(n)}(v^*_{n,K})$. Note that this set can have more than $K$ elements if there are ties in $\Psi$ values. Define $\mathcal{K} := \cup_{n \ge K} \mathcal{K}_n$. This set consists of the vertices that take one of the $K$ smallest $\Psi$ values at some point of the tree evolution. We will show the following.
\begin{lemma}\label{finKset}
Under Assumptions \ref{ass:attach-func} and \ref{ass:lim-sup}, $|\mathcal{K}| < \infty$  almost surely.
\end{lemma}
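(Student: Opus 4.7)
The plan is to reduce the statement to a deterministic-threshold question and then estimate via the continuous time embedding of Lemma \ref{lem:ctb-embedding-no-cp}. For $i \ge 2$ and $n \ge i$, write $T_i(n) := \desc{(\cT_f(n), v_1)}{v_i}$ for the subtree of descendants of $v_i$ when $\cT_f(n)$ is rooted at $v_1$. By Lemma \ref{maxKcentral}, there is a random $\delta > 0$ with $\Psi_{\cT_f(n)}(v^*_{n,K}) \le (1-\delta) n$ for every $n \ge K$. Viewing $\cT_f(n)$ as rooted at $v_i$, the subtree rooted at the parent of $v_i$ in the original rooting at $v_1$ has $n - |T_i(n)|$ vertices, so $\Psi_{\cT_f(n)}(v_i) \ge n - |T_i(n)|$, and membership $v_i \in \mathcal{K}_n$ forces $|T_i(n)| \ge \delta n$. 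For deterministic $\delta' > 0$, set $I(\delta') := \{i \ge 3 : |T_i(n)| \ge \delta' n \text{ for some } n \ge K \vee i\}$. Since $\delta > 0$ a.s., taking a countable union over rational $\delta' > 0$ gives $\{\delta \ge \delta'\} \cap \{\mathcal{K} \text{ infinite}\} \subseteq \{|I(\delta')| = \infty\}$, so it suffices to show $|I(\delta')| < \infty$ a.s.\ for every deterministic $\delta' > 0$.

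Pass to the continuous time embedding. Let $\tau_i$ denote the birth time of $v_i$, let $Z(t)$ denote the total population, let $\cD_i(\tau_i + s)$ denote the set of descendants of $v_i$ at time $\tau_i + s$, and set
\[
M_i := \sup_{s \ge 0} e^{-\lambda s} |\cD_i(\tau_i + s)|, \qquad W(t) := e^{-\lambda t} Z(t).
\]
Conditional on $\cF_{\tau_i}$, the process $s \mapsto |\cD_i(\tau_i + s)|$ evolves as a CTBP $\BP_f$ from one individual. By Theorem \ref{prop:convg-limit}, $W(t) \to W_\infty^{\mathrm{tot}} > 0$ a.s., and since $W(t) > 0$ for every $t$, one has $w := \inf_{t \ge 0} W(t) > 0$ a.s.; Corollary \ref{cor:sup-bound} applied to the full CTBP yields $W^* := \sup_{t \ge 0} W(t) < \infty$ a.s. Fix deterministic $w_0, W_0 \in (0, \infty)$ and set $\Omega_{w_0, W_0} := \{w \ge w_0, W^* \le W_0\}$, whose measures tend to $1$ as $w_0 \downarrow 0, W_0 \uparrow \infty$. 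Since $Z(\tau_i) = i$, one has $e^{\lambda \tau_i} = i / W(\tau_i) \ge i/W_0$ on $\Omega_{w_0, W_0}$. If $i \in I(\delta')$, then there exists $t^* \ge \tau_i$ with $|\cD_i(t^*)|/Z(t^*) \ge \delta'$; bounding $|\cD_i(t^*)| \le e^{\lambda (t^* - \tau_i)} M_i$ and $Z(t^*) \ge w_0 e^{\lambda t^*}$ on $\Omega_{w_0, W_0}$ yields $M_i \ge \delta' w_0 e^{\lambda \tau_i} \ge c i$ with $c := \delta' w_0 / W_0$.

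Applying Corollary \ref{cor:sup-bound} conditionally on $\cF_{\tau_i}$ to the CTBP generating $\cD_i$, one obtains $\pr(M_i \ge ci \mid \cF_{\tau_i}) \le C \log(ci)/(ci)^2$ for all sufficiently large $i$, whence $\sum_i \pr(M_i \ge c i) < \infty$; the first Borel--Cantelli lemma then gives $\pr(M_i \ge c i \text{ i.o.}) = 0$, so $\pr(|I(\delta')| = \infty, \Omega_{w_0, W_0}) = 0$. Letting $w_0 \downarrow 0$ and $W_0 \uparrow \infty$ completes the argument. The main technical obstacle is that the threshold $\delta$ from Lemma \ref{maxKcentral} and the bounds $w, W^*$ on the normalizer $W$ are all random, so a direct summation in Borel--Cantelli is unavailable; this is overcome by the two-level discretization via rational $\delta'$ and the events $\Omega_{w_0, W_0}$, which replace the random quantities by deterministic constants before the tail bound is applied.
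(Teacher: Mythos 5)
Your argument is correct and reaches the conclusion by a genuinely different (though related) route. Both proofs rest on the same two pillars: Lemma \ref{maxKcentral} to replace the random centrality threshold by a deterministic one (your event $\{\delta\ge\delta'\}$ is exactly the paper's event $B_{\zeta}$ with $\zeta=1-\delta'$), and Corollary \ref{cor:sup-bound} applied to the single CTBP of descendants of a late-arriving vertex, followed by Borel--Cantelli. They differ in what that single process is compared against. The paper fixes $n$, observes that on $B_\zeta$ the component on the far side of $v_n$ contains at least $(1-\zeta)n$ vertices, dominates its continuous-time evolution from below by a sum of $\lfloor(1-\zeta)n\rfloor$ i.i.d.\ copies of the lower-bounding process $\LZF$ of Lemma \ref{lem:lower-bd}, and controls that sum by Azuma--Hoeffding; the event $\{v_n\in\mathcal{K}\}\cap B_\zeta$ then forces one normalized CTBP to exceed order $n$, which Corollary \ref{cor:sup-bound} makes summable. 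You instead compare $v_i$'s descendant subtree directly with the \emph{total} population, exploiting that $e^{-\lambda t}Z(t)$ is almost surely bounded above and bounded away from zero uniformly in $t$, and you truncate these random bounds to deterministic constants via $\Omega_{w_0,W_0}$; this removes both the i.i.d.\ decomposition and the Azuma--Hoeffding step, at the cost of an extra layer of truncation (which you handle correctly, together with the rational-$\delta'$ union). Two small points to tighten: the ``full CTBP'' is the union $\BP_f^{\sss(1\downarrow)}\cup\BP_f^{\sss(2\downarrow)}$ of two independent copies, so Theorem \ref{prop:convg-limit} and Corollary \ref{cor:sup-bound} should be applied to each summand separately before concluding $0<w\le W^*<\infty$ a.s.\ (and positivity of $w$ also uses $Z(t)\ge 2$ on compact time intervals, not merely pointwise positivity); and the conditional application of Corollary \ref{cor:sup-bound} given $\cF_{\tau_i}$ should explicitly invoke the branching property at the stopping time $\eta_i^{\sss(1,2)}$, exactly as the paper does in the proof of Lemma \ref{lem:term-i}. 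Neither is a gap, just wording to make precise.
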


\begin{proof}
For $\zeta \in (0,1)$, define the event
$$
B_{\zeta} := \{\Psi_{\cT_f(n)}(v^*_{n,K}) \le \zeta n \text{ for all } n \ge K\}.
$$
For $n \ge K$, define
$$
\mathcal{H}_n := \{v_n \in \mathcal{K}\},
$$
namely, the event that the $n$-th vertex in the attachment tree has one of the $K$ least $\Psi$ values at some point in the future evolution of the tree. 
Fix any $\zeta \in (0,1)$. 
By Lemma \ref{lem:jord-cent-det} (a), for any $n \ge K$ and any $m \ge n$,
$$
\Psi_{\cT_f(m)}(v_n) \le \Psi_{\cT_f(m)}(v^*_{n,K}) \Longleftrightarrow |\desc{(\cT^\circ(m), v^*_{n,K})}{v_n}|\geq |\desc{(\cT^\circ(m), v_n)}{v^*_{n,K}}|.
$$
Let $\{v^*_{n,K}, u_1,\dots,v_n\}$ denote the path in $\cT_f(n)$ between $v^*_{n,K}$ and $v_n$. Then, for any $n \ge K$, if the event $\{\Psi_{\cT_f(n)}(v^*_{n,K}) \le \zeta n\}$ holds, we have
\begin{equation}\label{ps3}
|\desc{(\cT^\circ(n), v_n)}{v^*_{n,K}}| = n - |\desc{(\cT^\circ(n), v^*_{n,K})}{u_1}| \ge n - \Psi_{\cT_f(n)}(v^*_{n,K}) \ge n(1-\zeta).
\end{equation}
Now, conditional on $\cT_f(n)$, we run the attachment process in continuous time starting from initial configurations $\desc{(\cT^\circ(n), v_n)}{v^*_{n,K}}$ and $\desc{(\cT^\circ(n), v^*_{n,K})}{v_n}$ as in the proof of Lemma \ref{lem:term-i} where each vertex in one of these two initial population graphs reproduces according to the point process shifted by its degree in the corresponding graph and the descendants of these vertices reproduce according to the original point process. For $t \ge 0$, write $\mathcal{Z}^{(1)}(t)$ and $\mathcal{Z}^{(2)}(t)$ for the total number of descendants of vertices in $\desc{(\cT^\circ(n), v_n)}{v^*_{n,K}}$ and $\desc{(\cT^\circ(n), v^*_{n,K})}{v_n}$ respectively (counting the vertices in the initial configurations). Then, $\mathcal{Z}^{(2)}(\cdot)$ has the same distribution as $Z_f(\cdot)$. Further, note that if the event $\{\Psi_{\cT_f(n)}(v^*_{n,K}) \le \zeta n\}$ holds, we have by Lemma \ref{lem:lower-bd} and \eqref{ps3},
$$
\mathcal{Z}^{(1)}(\cdot) \ge \sum_{j=1}^{\lfloor (1-\zeta)n \rfloor}\LZF^{(j)}(\cdot),
$$
where $\{\LZF^{(j)}(\cdot) : j \ge 1\}$ are \emph{i.i.d.} copies of the lower bounding process $\LZF(\cdot)$ in Lemma \ref{lem:lower-bd}. Thus, writing $\underline{\mu} := \mathbb{E}\left(\inf_{t \ge 0} e^{-\lambda t}\LZF(t)\right)$ (which is positive by Lemma \ref{lem:lower-bd} (b)), for any $n \ge K$,
\begin{align}\label{ps4}
&\pr\left(B_{\zeta} \cap \mathcal{H}_n\right)\\
&\le \pr\left(\{\Psi_{\cT_f(n)}(v^*_{n,K}) \le \zeta n\} \cap \{|\desc{(\cT^\circ(m), v^*_{n,K})}{v_n}|\geq |\desc{(\cT^\circ(m), v_n)}{v^*_{n,K}}| \text{ for some } m \ge n\}\right)\notag\\
&\le \pr\left(\{\Psi_{\cT_f(n)}(v^*_{n,K}) \le \zeta n\} \cap \{\mathcal{Z}^{(2)}(t) \ge \mathcal{Z}^{(1)}(t) \text{ for some } t \ge 0\} \ \big| \ \cT_f(n)\right)\notag\\
&\le \pr\left(\mathcal{Z}^{(2)}(t) \ge \sum_{j=1}^{\lfloor(1-\zeta)n\rfloor}\LZF^{(j)}(t) \text{ for some } t \ge 0\right)\notag\\
&\le \pr\left(\sup_{t \ge 0}e^{-\lambda t}\mathcal{Z}^{(2)}(t) \ge n(1-\zeta) \underline{\mu}/2\right) + \pr\left( \sum_{j=1}^{\lfloor(1-\zeta)n\rfloor}\inf_{t \ge 0}(e^{-\lambda t}\LZF^{(j)}(t)) \le n(1-\zeta) \underline{\mu}/2\right).\notag
\end{align}
By Corollary \ref{cor:sup-bound}, there exists $C_1 >0$ such that for any $n \ge K$,
\begin{equation}\label{ps5}
\pr\left(\sup_{t \ge 0}e^{-\lambda t}\mathcal{Z}^{(2)}(t) \ge n(1-\zeta) \underline{\mu}/2\right) \le \frac{C_1 \log (n(1-\zeta) \underline{\mu})}{(n(1-\zeta) \underline{\mu})^2}.
\end{equation}
Note that for any $j \ge 1$, $\inf_{t \ge 0}(e^{-\lambda t}\LZF^{(j)}(t)) \le 1$. Thus, by Azuma-Hoeffding inequality, there exists $C_2 >0$ such that for any $n \ge K$,
\begin{multline}\label{ps6}
\pr\left( \sum_{j=1}^{\lfloor(1-\zeta)n\rfloor}\inf_{t \ge 0}(e^{-\lambda t}\LZF^{(j)}(t)) \le n(1-\zeta) \underline{\mu}/2\right)\\
\le \pr\left( \sum_{j=1}^{\lfloor(1-\zeta)n\rfloor}\left(\inf_{t \ge 0}(e^{-\lambda t}\LZF^{(j)}(t)) - \underline{\mu}\right) \le 1-n(1-\zeta) \underline{\mu}/2\right) \le e^{-C_2(1-\zeta)n}.
\end{multline}
Using \eqref{ps5} and \eqref{ps6} in \eqref{ps4}, we conclude that $\sum_{n \ge K}\pr\left(B_{\zeta} \cap \mathcal{H}_n\right) < \infty$. Thus, for any $\zeta \in (0,1)$, $|\mathcal{K}| < \infty$ almost surely on the event $B_{\zeta}$. The lemma now follows upon noting from Lemma \ref{maxKcentral} that $\pr\left(B_{\zeta}\right) \rightarrow 1$ as $\zeta \rightarrow 1$.
\end{proof}

\noindent {\bf Proof of Theorem \ref{thm:persistence}:} For any $1 \le i < j$, it follows from Theorem \ref{prop:convg-limit} that 
$$
\frac{|\desc{(\cT^\circ(n), v_i)}{v_j}|}{|\desc{(\cT^\circ(n), v_j)}{v_i}|} \stackrel{a.s.}{\longrightarrow} \frac{W_{\infty}^j}{W_{\infty}^i}, \qquad \mbox{ as } n\to\infty
$$
for absolutely continuous or singular continuous independent random variables whose distributions are determined by the tree $\cT_f(j)$. In either case, $\pr(W_{\infty}^j = W_{\infty}^i) = 0$ which, along with Lemma \ref{lem:jord-cent-det} (a), gives a random $n_{i,j} \in \mathbb{N}$ such that $\Psi_{\cT_f(n)}(v_i) \neq \Psi_{\cT_f(n)}(v_j)$ for all $n \ge n_{i,j}$. Hence, for any $N \in \mathbb{N}$, almost surely, there exists a (random) permutation $(\pi(1),\dots, \pi(N))$ of $\{1,\dots,N\}$ and $n_N \in \mathbb{N}$ such that
$$
\Psi_{\cT_f(n)}(v_{\pi(1)}) < \cdots < \Psi_{\cT_f(n)}(v_{\pi(N)}) \ \text{ for all } n \ge n_N.
$$
This observation readily implies for any $K, N \in \mathbb{N}$ with $N \ge K$, 
$$
\pr(\{\cT_f(n) : n \ge 1\} \text{ is } (\Psi,K) \text{ persistent}, \  \mathcal{K} \subset \{1,\dots, N\}) = \pr(\mathcal{K} \subset \{1,\dots, N\}).
$$
The theorem now follows by taking a limit as $N \rightarrow \infty$ on both sides of the equality above and using Lemma \ref{finKset}.

\section*{Acknowledgements}
Bhamidi was partially supported by NSF grants DMS-1613072, DMS-1606839 and ARO grant W911NF-17-1-0010. We acknowledge valuable feedback from two anonymous referees which led to major improvements in the presentation and clarity of this article.

\bibliographystyle{imsart-number}
\bibliography{persistence,pref_change_bib,scaling}
\end{document}